\documentclass{article}
\usepackage{amsmath, amsthm, amssymb, amsfonts}
\usepackage{bm}
\usepackage{empheq}
\usepackage[T1]{fontenc}
\usepackage{xcolor}

\numberwithin{equation}{section}

\usepackage{geometry}
\usepackage[shortlabels]{enumitem}
\usepackage[normalem]{ulem}

\usepackage{algorithm}
\usepackage{algorithmic}
\usepackage{adjustbox}
\usepackage{booktabs}
\usepackage{multirow}
\usepackage{siunitx}
\usepackage{array}
\usepackage{graphicx}
\usepackage{caption}
\usepackage{subcaption}
\usepackage{tikz}

\graphicspath{ {./img/} }

\usepackage{cite}

\usepackage{hyperref}
\hypersetup{
    colorlinks=true,
    linkcolor=blue
}

\usepackage{thmtools}
\usepackage{thm-restate}

\usepackage[capitalize, noabbrev, nameinlink]{cleveref}

\declaretheorem[name=Theorem, numberwithin=section]{theorem}
\declaretheorem[name=Proposition, sibling=theorem]{proposition}

\declaretheorem[name=Lemma,       sibling=theorem]{lemma}

\declaretheorem[name=Definition,  style=definition, sibling=theorem]{definition}
\declaretheorem[name=Example,     style=definition, sibling=theorem]{example}
\declaretheorem[name=Remark,      style=remark,     sibling=theorem]{remark}

\crefdefaultlabelformat{#2\textup{#1}#3}

\crefformat{equation}{\textup{#2(#1)#3}}
\crefrangeformat{equation}{\textup{#3(#1)#4--#5(#2)#6}}
\crefmultiformat{equation}{\textup{#2(#1)#3}}{ and \textup{#2(#1)#3}}{, \textup{#2(#1)#3}}{, and \textup{#2(#1)#3}}

\Crefformat{equation}{#2Equation~\textup{(#1)}#3}
\Crefrangeformat{equation}{Equations~\textup{#3(#1)#4--#5(#2)#6}}
\Crefmultiformat{equation}{Equations~\textup{#2(#1)#3}}{ and \textup{#2(#1)#3}}{, \textup{#2(#1)#3}}{, and \textup{#2(#1)#3}}

\crefname{section}{section}{sections}
\Crefname{section}{Section}{Sections}

\begin{document}

\begin{center}
{ \bf \large 
    \textsc{An Optimization Approach to Weight Collocation} \\
    \textsc{for Scattered Spherical Data}
}

\vspace{0.3cm}
\scshape
\renewcommand{\thefootnote}{\fnsymbol{footnote}}

Congpei An\footnote{School of Mathematics and Statistics, Guizhou University, Guiyang 550025, China. This author was supported by NSFC (No. 12371099). Email: andbachcp@gmail.com}
\qquad
Xiannan Hu\footnote{Department of Mathematics, The University of Hong Kong, Hong Kong, China. Email: hans0711@connect.hku.hk}
\qquad
Xiaoming Yuan\footnote{Department of Mathematics, The University of Hong Kong, Hong Kong, China. This author was supported by the Croucher Senior Fellowship and the GRF 17305825. Email: xmyuan@hku.hk}

\renewcommand{\thefootnote}{\arabic{footnote}}
\setcounter{footnote}{0}

\vspace{0.3cm}
\today
\end{center}

\begin{abstract}
We introduce an optimization approach for constructing spherical quadrature rules on arbitrarily scattered data. Rather than designing node placements, the new approach focuses on optimally computing the weights for fixed configurations. Motivated by P\'olya's necessary and sufficient conditions for quadrature convergence in 1933, we argue that pursuing weight positivity and high algebraic exactness for scattered data approximation is not necessary. To align the quadrature design with the underlying theory of approximation, we construct convex optimization models with suitable objective functionals by examining the accuracy of numerical integration with reproducing kernels of Sobolev spaces and the performance of hyperinterpolation with Marcinkiewicz–Zygmund (MZ) inequalities. The resulting optimization models encode the spatial distribution of the scattered sites and the analytic properties of the target function spaces. The proposed approach enables the derivation of rigorous theoretical stability bounds, and the resulting quadrature weights are efficiently computable by modern convex optimization techniques. Numerical results are reported to demonstrate the performance of the optimization approach for fundamental approximation tasks such as numerical integration and hyperinterpolation for scattered spherical data.
\end{abstract}

\noindent\textbf{\textsc{Keywords}}: spherical scattered data, quadrature, hyperinterpolation, Marcinkiewicz--Zygmund \\
\phantom{\textbf{\textsc{Keywords}}\;\;\,}inequality, convex optimization.

\vspace{0.3cm}

\noindent\textbf{\textsc{AMS Subject Classifications}}: 65D32, 41A55, 41A17, 65D15, 90C25.

\section{Introduction}
Numerical integration over the unit sphere $\mathbb{S}^{2}: = \{\mathbf{x}\in\mathbb{R}^{3}\,|\,\|\mathbf{x}\|_2 = 1\}$ is a ubiquitous task in disciplines ranging from computer graphics \cite{ramamoorthi2004signal}, geophysics \cite{freeden1998constructive, freeden2008spherical, sambridge1995geophysical, hofmann2006physical, simons2006spatiospectral}, planetary science \cite{turcotte1981role, wieczorek2007gravity, wieczorek1998potential, gorski2005healpix}, quantum chemistry \cite{choi1999rapid, ritchie1999fast}, astrophysics \cite{bennett1996four, jarosik2011seven} and machine learning \cite{cohen2018spherical, passaro2023reducing,montufar2022distributed}. A quadrature rule $Q[X_N, \mathbf{w}]$ defined by
\begin{equation*}
    Q[X_N, \mathbf{w}](f) := \sum_{j=1}^N w_j f(\mathbf{x}_j)
\end{equation*}
consists of two components: a set of nodes $X_N=\{\mathbf{x}_1, \ldots, \mathbf{x}_N\}\subseteq\mathbb{S}^2$ and a corresponding weight vector $\mathbf{w} = (w_1, \ldots, w_N)^\top \in \mathbb{R}^N$. Historically, the approximation theory literature has focused primarily on optimizing node configurations. This is well exemplified by the theory of spherical designs \cite{Delsarte1977SphericalCA, bannai2009survey}, which seeks ideal node placements that achieve high algebraic exactness using equal weights. 

While theoretically profound, the emphasis on ideal node placements frequently conflicts with the constraints of practical applications.
In many real-world scenarios, function evaluations are restricted to a prescribed set of arbitrarily scattered sites. For instance, in geomagnetism and satellite gravimetry, missions such as MAGSAT, CHAMP, and GRACE sample the sphere along orbits governed by celestial mechanics~\cite{langel1982geomagnetic, mandea2006magnetic, olsen2006chaos, swenson2002methods, whaler1994downward}. Cosmology often utilizes pixelation schemes such as HEALPix, where the grid geometry is driven by resolution-efficiency trade-offs rather than quadrature optimality~\cite{gorski2005healpix}. In computer graphics, environment maps and omnidirectional cameras yield unstructured spherical samples for rendering and inference tasks~\cite{cohen2018spherical}. In these contexts, sampling locations are determined by physical, instrumental, or logistical constraints. Reconfiguring the observation network is generally financially and operationally prohibitive. Consequently, when the nodes are fixed and potentially poorly distributed, the burden of ensuring both integration accuracy and numerical stability rests entirely on the design of the quadrature weights.

Classical quadrature design pursues high algebraic exactness, often accompanied by the requirement of strictly positive weights. While effective for well-distributed nodes, irregular node distributions lack the spatial resolution required to appropriately sample high-frequency spherical harmonics, so forcing the exact integration of these components becomes numerically unstable and overfits the quadrature to the discrete geometry. This manifests as weights with mixed signs and a significantly inflated $\ell_1$-norm, which amplifies floating-point cancellation errors and degrades the integration accuracy for general non-polynomial functions. According to P\'olya's theorem \cite{polya1933konvergenz} (cf. \cref{thm:polya}), quadrature convergence requires only uniform boundedness of quadrature weights and an asymptotic exactness on polynomials. The classical criteria of strict positivity and finite-stage high exactness are therefore sufficient, but not necessary, for convergence. Enforcing them on poorly distributed data can instead be counterproductive. 

Motivated by this fact, we propose to relax these rigid classical requirements in favor of moderate algebraic exactness and quadrature weight design. We formalize this shift of  methodologies and term it \textit{weight collocation}, by framing the construction of quadrature weights on scattered data as an optimization problem (cf. \Cref{sec:collocation}). Instead of solving a nearly square and ill-conditioned exactness system, we determine the weight vector by minimizing an objective functional subject to a prescribed moderate algebraic exactness. We cast the abstract model as standard and tractable convex optimization problems, such as quadratic programming (QP) and semidefinite programming (SDP) problems, both of which can be solved efficiently with modern convex optimization techniques. 

Our more specific aim is to develop the following two complementary collocation methodologies for numerical integration and hyperinterpolation, respectively:
\begin{itemize}
    \item \textit{Kernel Collocation} (cf. \Cref{sec:kernel}): We construct quadrature weights by minimizing the \textit{generalized discrepancy}, also known as the worst-case integration error in Sobolev spaces. This formulation reduces the quadrature design to QP. To practically compute the associated matrix in the absence of closed-form kernels, we derive theoretically justified series truncation strategies. For highly clustered nodes, we further introduce a bandlimited formulation that filters unsupported spherical harmonics. Theoretically, our spectral analysis of the QP matrix reveals two asymptotic behaviors governed by functional smoothness: as smoothness approaches zero, the framework recovers classical $\ell_2$-minimization; as it increases, the condition number grows exponentially.
    \item \textit{Marcinkiewicz--Zygmund (MZ) Collocation} (cf. \Cref{sec:MZ}): Targeting hyperinterpolation, we first prove, underpinned by MZ inequalities, a stability-accuracy decomposition of hyperinterpolation error under the general setting where neither exactness nor weight positivity is assumed. To control stability, we introduce a geometry-aware regularizer called the $\chi^2$-divergence that penalizes weight deviations from a Voronoi geometric prior, a choice theoretically justified by its $2$-optimality. To improve accuracy, we consider optimization on the spectrum of the Gram matrix. Fusing these objectives yields a unified model that directly reflects the error decomposition. To solve this efficiently, we develop a customized interior-point method that exploits the sum-of-rank-one structure of the Gram matrix to reduce the per-iteration complexity.
\end{itemize}

Finally, \Cref{sec:num_exp} provides a comprehensive empirical study utilizing two distinct types of node sets: artificial low-discrepancy sequences (Halton points) and geometrically challenging real-world satellite trajectories (MAGSAT points). We first validate our theoretical stability bounds, confirming the spectral conditioning limits of the discrepancy matrix and demonstrating the practical efficacy of the proposed geometry-aware regularizer. Then, we proceed to evaluate the collocated weights on the fundamental tasks of numerical integration and hyperinterpolation, from which notable improvements in performance are observed. 

\section{Preliminaries} \label{sec:preliminaries}

\subsection{Geometry of the Sphere}
We equip $\mathbb{S}^2$ with the Lebesgue surface measure $\omega$, normalized so that $|\mathbb{S}^2| = \int_{\mathbb{S}^2} d\omega(\mathbf{x}) = 4\pi$. We parametrize points $\mathbf{x} \in \mathbb{S}^2$ using spherical coordinates $(\theta, \phi) \in [0, \pi] \times [0, 2\pi)$ via
\begin{equation*}
    \mathbf{x} = (\sin\theta\cos\phi, \sin\theta\sin\phi, \cos\theta)^\top.
\end{equation*}
The intrinsic geometry of $\mathbb{S}^2$ is given by the geodesic distance $\mathrm{dist}(\mathbf{x}, \mathbf{y}) = \arccos(\langle \mathbf{x}, \mathbf{y}\rangle)$, where $\langle \cdot, \cdot\rangle$ denotes the standard Euclidean inner product in $\mathbb{R}^3$. For any $\mathbf{x} \in \mathbb{S}^2$ and $r \in [0, \pi]$, we denote the spherical cap of angular radius $r$ centered at $\mathbf{x}$ by
\begin{equation*}
    \mathcal{C}(\mathbf{x}, r) := \{\mathbf{y} \in \mathbb{S}^2 \,|\, \text{dist}(\mathbf{x}, \mathbf{y}) \leq r \}.
\end{equation*}
Its surface area is
\begin{equation} \label{eq:area-sph-cap}
    |\mathcal{C}(\mathbf{x}, r)| = 2\pi \int_0^r \sin\phi\, d\phi = 4\pi\sin^2(r/2).
\end{equation}

We are primarily concerned with the spatial distribution of finite point sets $X_N = \{\mathbf{x}_1, \ldots, \mathbf{x}_N\} \subseteq \mathbb{S}^2$. The quality of the distribution is commonly captured by the \textit{mesh norm} $h_{X_N}$ and \textit{separation distance} $q_{X_N}$, which are defined, respectively, by  
\begin{equation*}
    h_{X_N} := \max_{\mathbf{x} \in \mathbb{S}^2} \min_{\mathbf{x}_j \in X_N} \mathrm{dist}(\mathbf{x}, \mathbf{x}_j)\quad\hbox{and} \quad q_{X_N} := \min_{i \neq j} \mathrm{dist}(\mathbf{x}_i, \mathbf{x}_j).
\end{equation*}

\subsection{Spherical Harmonics}
Let $C(\mathbb{S}^2)$ denote the space of real-valued continuous functions on $\mathbb{S}^2$ equipped with the uniform norm $\|f\|_{\infty} := \sup_{\mathbf{x} \in \mathbb{S}^2} |f(\mathbf{x})|$. For $p \in [1, \infty)$, we denote the standard Lebesgue spaces by $L^p(\mathbb{S}^2)$. In particular, $L^2(\mathbb{S}^2)$ is a Hilbert space with the inner product
\begin{equation*}
    \langle f, g \rangle_{L^2} := \int_{\mathbb{S}^2} f(\mathbf{x})g(\mathbf{x})\,d\omega(\mathbf{x})
\end{equation*}
and the induced norm $\|f\|_{L^2} :=\sqrt{\langle f, f \rangle_{L^2}}$.

\textit{Spherical harmonics} \cite{muller2006spherical} are the restrictions of harmonic homogeneous polynomials in $\mathbb{R}^3$ to $\mathbb{S}^2$. Let $\mathbb{H}_\ell$ denote the space of spherical harmonics of degree $\ell \in \mathbb{N}_0$. It is known that $\mathrm{dim}(\mathbb{H}_\ell) = 2\ell + 1$. We choose an orthonormal basis for each $\mathbb{H}_\ell$
\begin{equation*}
    \{Y_{\ell, k} \,|\, k = -\ell, \ldots, -1, 0, 1, \ldots, \ell\}.
\end{equation*}
The spherical harmonics satisfy the \textit{addition theorem} \cite{muller2006spherical}
\begin{equation} \label{eq:addition-theorem}
    \sum_{k = -\ell}^{\ell} Y_{\ell, k}(\mathbf{x})Y_{\ell, k}(\mathbf{y}) = \frac{2\ell + 1}{4\pi}P_\ell(\langle \mathbf{x}, \mathbf{y}\rangle) \quad \forall\, \mathbf{x}, \mathbf{y} \in \mathbb{S}^2,
\end{equation}
where $P_{\ell}$ is the Legendre polynomial of degree $\ell$ normalized such that $P_\ell(1) = 1$.

Let $\mathbb{P}_n(\mathbb{S}^2) = \bigoplus_{\ell=0}^n \mathbb{H}_\ell$ be the space of spherical polynomials of degree at most $n$, and let $\mathbb{P}(\mathbb{S}^2) = \bigoplus_{\ell=0}^\infty \mathbb{H}_\ell$ be the space of all spherical polynomials. Because spherical harmonics of different degrees are mutually orthogonal, the collection $\{Y_{\ell, k}\,|\, 0 \leq \ell \leq n, |k| \leq \ell\}$ forms an orthonormal basis for $\mathbb{P}_n(\mathbb{S}^2)$, implying $\mathrm{dim}(\mathbb{P}_n(\mathbb{S}^2)) = (n+1)^2$. Furthermore, the set of all spherical harmonics forms a complete orthonormal system for $L^2(\mathbb{S}^2)$. Consequently, any $f \in L^2(\mathbb{S}^2)$ can be represented by its Fourier expansion
\begin{equation*}
f = \sum_{\ell=0}^\infty \sum_{k=-\ell}^{\ell} \hat{f}_{\ell, k} Y_{\ell, k} \quad \text{with} \quad \hat{f}_{\ell, k} := \langle f, Y_{\ell, k}\rangle_{L^2}.
\end{equation*}

\subsection{Hyperinterpolation}
The orthogonal projection operator $T_n: L^2(\mathbb{S}^2) \to \mathbb{P}_n(\mathbb{S}^2)$ onto $\mathbb{P}_n(\mathbb{S}^2)$ is
\begin{equation*}
    T_nf (\mathbf{x}):= \sum_{\ell=0}^n \sum_{k=-\ell}^{\ell} \hat{f}_{\ell,k} Y_{\ell, k} = \int_{\mathbb{S}^2} f(\mathbf{y}) G_n(\mathbf{x}, \mathbf{y}) \, d\omega(\mathbf{y}),
\end{equation*}
where
\begin{equation} \label{eq:reproducing-kernel-Pn}
    G_n(\mathbf{x}, \mathbf{y}) := 
    \sum_{\ell=0}^n \sum_{k = -\ell}^{\ell} Y_{\ell, k}(\mathbf{x})Y_{\ell, k}(\mathbf{y}) = \sum_{\ell=0}^n \frac{2\ell+1}{4\pi} P_\ell(\langle \mathbf{x}, \mathbf{y}\rangle).
\end{equation}
In practice, the Fourier coefficients $\hat{f}_{\ell, k}$ are often approximated via a quadrature $Q[X_N, \mathbf{w}]$. This induces a discrete bilinear form on $C(\mathbb{S}^2)$ given by
\begin{equation*}
    \langle f, g \rangle_Q := Q[X_N, \mathbf{w}](fg) = \sum_{j=1}^N w_j f(\mathbf{x}_j)g(\mathbf{x}_j), \quad f, g \in C(\mathbb{S}^2).
\end{equation*}
Following \cite{sloan1995polynomial}, we define the \textit{hyperinterpolation} operator $L_n: C(\mathbb{S}^2) \to \mathbb{P}_n(\mathbb{S}^2)$ by approximating $\hat{f}_{\ell, k} \approx \langle f, Y_{\ell, k}\rangle_Q$, yielding
\begin{equation} \label{eq:hyperinterpolation}
    L_nf (\mathbf{x}):= \sum_{\ell=0}^n \sum_{k=-\ell}^{\ell} \langle f, Y_{\ell, k}\rangle_Q Y_{\ell, k}(\mathbf{x}) = \langle f, G_n(\mathbf{x}, \cdot)\rangle_Q,
\end{equation}
where the last equality follows from the addition theorem \eqref{eq:addition-theorem}. In our settings, the quadrature weights $w_j$ are not restricted to being positive. Consequently, $\langle \cdot, \cdot\rangle_Q$ does not necessarily define an inner product. Historically, theoretical properties and error bounds of hyperinterpolation have been primarily developed for quadrature rules with strictly positive weights \cite{sloan1995polynomial, hesse2006hyperinterpolation, reimer2000hyperinterpolation, an2024bypassing}.
Finally, let us prove an identity that will be used frequently later.
\begin{lemma} \label{lem:hyperinterpolation-lemma}
    Let $L_n$ be the hyperinterpolation operator associated with a quadrature $Q[X_N, \mathbf{w}]$. For any $f \in C(\mathbb{S}^2)$ and $p \in \mathbb{P}_n(\mathbb{S}^2)$, it holds that
    \begin{equation}
        \langle L_n f, p \rangle_{L^2} = \langle f, p \rangle_Q.
    \end{equation}
\end{lemma}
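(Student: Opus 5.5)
The identity is linear in $p$, so I will first expand $p$ in the orthonormal basis of $\mathbb{P}_n(\mathbb{S}^2)$. Any $p \in \mathbb{P}_n(\mathbb{S}^2)$ can be written as
\begin{equation*}
p = \sum_{\ell=0}^n \sum_{k=-\ell}^{\ell} \hat{p}_{\ell,k} Y_{\ell,k}, \qquad \hat{p}_{\ell,k} = \langle p, Y_{\ell,k}\rangle_{L^2}.
\end{equation*}
Both sides of the claimed identity are linear in $p$: the left side through the $L^2$ inner product, and the right side because $\langle f, \cdot\rangle_Q = Q[X_N,\mathbf{w}](f\,\cdot)$ is a finite weighted sum of point evaluations. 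It therefore suffices to verify the identity for each basis function $p = Y_{\ell',k'}$ with $0 \le \ell' \le n$ and $|k'| \le \ell'$.

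For this step I use the first expression in \eqref{eq:hyperinterpolation}, so that $L_n f = \sum_{\ell=0}^n\sum_{k=-\ell}^{\ell} \langle f, Y_{\ell,k}\rangle_Q\, Y_{\ell,k}$ is a finite linear combination of basis functions. Taking the $L^2$ inner product with $Y_{\ell',k'}$ and using orthonormality, $\langle Y_{\ell,k}, Y_{\ell',k'}\rangle_{L^2} = \delta_{\ell\ell'}\delta_{kk'}$, collapses the sum to the single coefficient $\langle f, Y_{\ell',k'}\rangle_Q$. Summing back against $\hat p_{\ell',k'}$ and using the linearity of $\langle f,\cdot\rangle_Q$ gives $\langle f, p\rangle_Q$.

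An equivalent route is to use the kernel form $L_nf(\mathbf{x}) = \sum_j w_j f(\mathbf{x}_j) G_n(\mathbf{x},\mathbf{x}_j)$. One then swaps the finite sum with the integral and uses the reproducing property $\int_{\mathbb{S}^2} G_n(\mathbf{x}_j,\mathbf{y}) p(\mathbf{y})\,d\omega(\mathbf{y}) = p(\mathbf{x}_j)$ for $p \in \mathbb{P}_n(\mathbb{S}^2)$. That property follows from \eqref{eq:reproducing-kernel-Pn} and orthonormality, together with the symmetry of $G_n$.

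There is no real obstacle. The only point worth stressing is that the argument never uses positivity of the weights or any exactness of the quadrature. Each manipulation is either finite linearity of $\langle\cdot,\cdot\rangle_Q$ in its second argument or the orthonormality of the basis in $L^2$. The identity therefore holds even though $\langle\cdot,\cdot\rangle_Q$ need not be an inner product.
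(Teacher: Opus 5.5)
Your proof is correct: the second route you give (kernel form of $L_n f$, swapping the finite sum with the integral, then using the reproducing property of $G_n$ on $\mathbb{P}_n(\mathbb{S}^2)$) is exactly the paper's proof, and your primary route is the same orthonormality computation carried out coefficient by coefficient instead of through the kernel $G_n$. Your remark that neither weight positivity nor exactness is used is accurate, and it is why the paper can later apply this lemma to signed, non-exact quadratures.
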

\begin{proof}
    Fix arbitrary $f \in C(\mathbb{S}^2)$ and $p \in \mathbb{P}_n(\mathbb{S}^2)$. By the definition of the hyperinterpolation operator \eqref{eq:hyperinterpolation}, we have
    \begin{equation*}
        L_nf(\mathbf{x}) = \sum_{j=1}^N w_jf(\mathbf{x}_j)G_n(\mathbf{x}, \mathbf{x}_j).
    \end{equation*}
    Taking the $L^2$-inner product with $p$ and exchanging the order of summation and integration yields:
    \begin{equation*}
        \langle L_nf, p \rangle_{L^2} = \int_{\mathbb{S}^2} \left(\sum_{j=1}^N w_jf(\mathbf{x}_j)G_n(\mathbf{x}, \mathbf{x}_j)\right) p(\mathbf{x})\,d\omega(\mathbf{x}) = \sum_{j=1}^N w_jf(\mathbf{x}_j) \int_{\mathbb{S}^2} G_n(\mathbf{x}, \mathbf{x}_j)p(\mathbf{x})\,d\omega(\mathbf{x}).
    \end{equation*}
    Because $p \in \mathbb{P}_n(\mathbb{S}^2)$, the reproducing property of $G_n$ implies that
    \begin{equation*}
        \langle L_nf, p\rangle_{L^2} = \sum_{j=1}^N w_jf(\mathbf{x}_j)p(\mathbf{x}_j) = \langle f, p \rangle_Q.
    \end{equation*}
    The proof is complete.
\end{proof}

\subsection{Sobolev Spaces}
\label{sec:sob}

A \textit{reproducing kernel Hilbert space} (RKHS) $\mathbb{H}$ on $\mathbb{S}^2$ is a Hilbert space in which all point evaluation functionals are bounded. Denote its associated inner product by $\langle \cdot, \cdot\rangle_{\mathbb{H}}$. By the Riesz representation theorem, there exists a unique symmetric and positive definite kernel $K:\mathbb{S}^2\times \mathbb{S}^2\to \mathbb{R}$ such that $K(\mathbf{x},\cdot)\in \mathbb{H}$ and
\begin{equation*}
  \langle f,K(\mathbf{x},\cdot)\rangle_{\mathbb{H}} = f(\mathbf{x})\quad \forall\, f\in \mathbb{H}, \,\mathbf{x}\in \mathbb{S}^2.
\end{equation*}
For example, the polynomial kernel $G_n$ given in \eqref{eq:reproducing-kernel-Pn} is the reproducing kernel of $\mathbb{P}_n(\mathbb{S}^2)$ equipped with the $L^2$ inner product.

To measure the regularity of functions and to quantify integration errors, we introduce a family of Sobolev spaces on the sphere.
For a smoothness index $s \ge 0$, the \textit{Sobolev space} $\mathbb{H}^s(\mathbb{S}^2)$ is defined as the set of all $f\in L^2(\mathbb{S}^2)$ whose Fourier coefficients satisfy
\begin{equation*}
  \sum_{\ell = 0}^{\infty}\sum_{k = -\ell}^{\ell} \frac{1}{a_\ell^{(s)}}\,|\hat{f}_{\ell,k}|^2 < \infty,
\end{equation*}
where $\{a_\ell^{(s)}\}_{\ell \in \mathbb{N}_0}$ is a sequence of positive numbers defining                                 the norm. Unless otherwise stated, a canonical choice we adopt is
\begin{equation} \label{eq:sobolev-norm-constant}
  a_\ell^{(s)} := \left(\ell + \frac{1}{2}\right)^{-2s}, \qquad \ell \ge 0.
\end{equation}
The space $\mathbb{H}^s(\mathbb{S}^2)$ is a Hilbert space with the inner product
\begin{equation*}
  \langle f,g\rangle_{\mathbb{H}^s} := \sum_{\ell = 0}^{\infty} \frac{1}{a_\ell^{(s)}} \sum_{k = -\ell}^{\ell} \hat{f}_{\ell,k}\,\hat{g}_{\ell,k},
\end{equation*}
and it induces the norm
\begin{equation*}
  \|f\|_{\mathbb{H}^s} := \left( \sum_{\ell = 0}^{\infty} \Bigl(\ell + \frac{1}{2}\Bigr)^{2s} \sum_{k = -\ell}^{\ell} |\hat{f}_{\ell,k}|^2 \right)^{1/2}.
\end{equation*}
When $s = 0$, the norm reduces to the standard $L^2$ norm, yielding $\mathbb{H}^0(\mathbb{S}^2) = L^2(\mathbb{S}^2)$.

\begin{remark}
It could be  a mathematical advantage to utilize an equivalent norm by replacing \eqref{eq:sobolev-norm-constant} with a sequence $\{a_\ell^{(s)}\}$ satisfying 
\begin{equation} \label{eq:sobolev-norm-constant-alt}
    a_\ell^{(s)} \asymp (\ell + 1/2)^{-2s}. 
\end{equation}
While this does not alter the underlying function space, it can yield a reproducing kernel with a closed-form expression, which simplifies computations as exploited in \Cref{sec:closed_form}.
\end{remark}

When $s > 1$, the Sobolev embedding theorem ensures that $\mathbb{H}^s(\mathbb{S}^2) \subseteq C(\mathbb{S}^2)$. In this case, $\mathbb{H}^s(\mathbb{S}^2)$ is an RKHS possessing a continuous reproducing kernel $K_s$ given by
\begin{align} \label{eq:reproducing-kernel-sobolev}
    K_s(\mathbf{x}, \mathbf{y}) = \sum_{\ell=0}^\infty \sum_{k=-\ell}^{\ell} a_\ell^{(s)} Y_{\ell,k}(\mathbf{x}) Y_{\ell, k}(\mathbf{y}) = \sum_{\ell=0}^\infty \frac{2\ell+1}{4\pi} a_\ell^{(s)} P_{\ell}(\langle\mathbf{x}, \mathbf{y}\rangle),
\end{align}
with the infinite series converging uniformly and absolutely.
\section{Weight Collocation} \label{sec:collocation}
To practically construct a numerical quadrature rule $Q[X_N, \mathbf{w}]$ for scattered data on the unit sphere $\mathbb{S}^2$, one needs to determine  an appropriate weight vector $\mathbf{w} \in \mathbb{R}^N$ for a prescribed set of nodes $X_N = \{\mathbf{x}_1, \dots, \mathbf{x}_N\} \subseteq \mathbb{S}^2$. Because $X_N$ can be arbitrarily scattered or even poorly distributed in real-world scenarios, how to ensure  accuracy and numerical stability rests entirely upon the proper design of the weights $\mathbf{w}$. We use the term \textit{weight collocation} to denote the process of optimizing these weights, where the discrete quadrature $Q[X_N, \mathbf{w}]$ is driven to better approximate the integration functional
\begin{equation*}
    I(f) = \int_{\mathbb{S}^2} f(\mathbf{x})\, d\omega(\mathbf{x}).
\end{equation*}

\subsection{From the Exactness System Back to Pólya's Condition}
Classically, quadrature accuracy is evaluated primarily through the lens of algebraic exactness. A quadrature $Q[X_N,\mathbf{w}]$ is said to be exact of degree $n$ if
\begin{equation*}
    Q[X_N,\mathbf{w}](p) = I(p) \quad \forall\,p \in \mathbb{P}_n(\mathbb{S}^2).
\end{equation*}
Using the orthonormal basis $\{Y_{\ell, k} \,|\, 0 \leq \ell \leq n, |k| \leq \ell\}$, the requirement of exactness of degree $n$ yields the linear system
\begin{equation} \label{eq:exactness-system}
\mathbf{Y}_n \mathbf{w} = \mathbf{b}_n,
\end{equation}
where the basis matrix $\mathbf{Y}_n \in \mathbb{R}^{(n+1)^2 \times N}$ and the moment vector $\mathbf{b}_n \in \mathbb{R}^{(n+1)^2}$ are defined as
\begin{equation*}
    \mathbf{Y}_n := [Y_{\ell,k}(\mathbf{x}_j)]_{(\ell,k), j} \in \mathbb{R}^{(n+1)^2 \times N} \quad \text{and} \quad \mathbf{b}_n := [I(Y_{\ell, k})]_{(\ell, k)} \in \mathbb{R}^{(n+1)^2},
\end{equation*}
respectively.

From a purely algebraic perspective, for an arbitrary scattered set $X_N$, the exactness system \eqref{eq:exactness-system} is generally only feasible if it is underdetermined. This establishes a crude algebraic capacity of $(n+1)^2 \leq N$, or
\begin{equation} \label{eq:max-exact}
    n \leq \lfloor \sqrt{N} - 1 \rfloor.
\end{equation}
In the context of scattered data, pursuing maximal exactness by pushing $n$ to the upper limit in \eqref{eq:max-exact} poses significant computational challenges. Exhausting the available degrees of freedom leaves the nearly square basis matrix $\mathbf{Y}_n$ highly ill-conditioned. Consequently, the computed weights exhibit severe oscillations with large negative components, triggering significant cancellation errors in floating-point arithmetic. This instability is reminiscent of the divergence observed for high-degree Newton--Cotes formulas on the interval $[-1, 1]$, which arises from enforcing high algebraic exactness at equally spaced nodes \cite{trefethen2022exactness}.

To circumvent this instability, the approximation theory community has historically prioritized quadratures with strictly positive weights. The theoretical appeal is straightforward: if all $w_j > 0$ and the quadrature integrates the constant function exactly, i.e., 
\begin{equation} \label{eq:exact-for-const}
    \sum_{j=1}^N w_j = 4\pi,
\end{equation}
the weights are inherently uniformly bounded. Foundational results, such as Tchakaloff's theorem \cite{tchakaloff1957formules}, which guarantees the existence of positive quadrature weights for finite-dimensional spaces, have influenced decades of quadrature design.

The design principles of high algebraic exactness and strict positivity for quadrature formulas, however, may be inconsistent with practical approximation goals. As articulated  in the recent review by Trefethen \cite{trefethen2022exactness}, forcing the exact integration of high-frequency components often overfits the discrete geometry, thereby sacrificing general approximation power for non-polynomial functions. For scattered data with deficient geometry, strict weight positivity could also be overly restrictive. Relaxing these constraints is, in fact, directly justified by P\'olya's foundational theorem on quadrature convergence \cite{polya1933konvergenz}. We here present a natural generalization to the sphere.

\newpage
\begin{theorem} \label{thm:polya}
    Given a sequence of spherical quadratures $Q[X_N, \mathbf{w}^{(N)}]: C(\mathbb{S}^2) \to \mathbb{R}$, $N \in \mathbb{N}$, of the form $Q[X_N,\mathbf{w}^{(N)}](f) := \sum_{j=1}^N w_j^{(N)} f(\mathbf{x}_j^{(N)})$, suppose the sequence possesses the asymptotic approximation property for polynomials
    \begin{equation} \label{eq:approximation-property}
        \lim_{N \to \infty} Q_N(p) = \int_{\mathbb{S}^2} p(\mathbf{x})\, d\omega(\mathbf{x}) \quad \forall \, p \in \mathbb{P}(\mathbb{S}^2).
    \end{equation}
    Then, the sequence converges for all continuous functions,
    \begin{equation*}
        \lim_{N \to \infty} Q_N(f) = \int_{\mathbb{S}^2} f(\mathbf{x})\, d\omega(\mathbf{x}) \quad \forall \, f \in C(\mathbb{S}^2)
    \end{equation*}
    if and only if the absolute sum of the weights is uniformly bounded
    \begin{equation} \label{eq:polya-cond}
        \sup_{N \in \mathbb{N}} \|\mathbf{w}^{(N)}\|_1 < \infty.
    \end{equation}
\end{theorem}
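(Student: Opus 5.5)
The proof splits into the two directions of the equivalence. Both rest on the observation that each $Q_N := Q[X_N,\mathbf{w}^{(N)}]$ is a bounded linear functional on the Banach space $(C(\mathbb{S}^2), \|\cdot\|_\infty)$, with operator norm exactly $\|Q_N\| = \|\mathbf{w}^{(N)}\|_1$.

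I would establish this norm identity first. The bound $|Q_N(f)| \le \|\mathbf{w}^{(N)}\|_1 \|f\|_\infty$ is immediate. For the reverse inequality, the nodes $\mathbf{x}_j^{(N)}$ are finitely many, so we may assume they are distinct; if they are not, merge coincident nodes by summing their weights, which can only decrease the $\ell_1$-norm. We then have to be careful to state the condition for the merged weights. A cleaner route is to assume the nodes of each rule are distinct, which is implicit in calling $X_N$ a point set. By Urysohn's lemma, or by explicit tent functions of the geodesic distance supported in disjoint small caps around the nodes, there is a continuous $f$ with $\|f\|_\infty \le 1$ and $f(\mathbf{x}_j^{(N)}) = \operatorname{sign}(w_j^{(N)})$. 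For this $f$ we get $Q_N(f) = \|\mathbf{w}^{(N)}\|_1$.

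\emph{Sufficiency.} Assume $M := \sup_N \|\mathbf{w}^{(N)}\|_1 < \infty$. Fix $f \in C(\mathbb{S}^2)$ and $\varepsilon > 0$. By density of spherical polynomials in $C(\mathbb{S}^2)$ (Stone--Weierstrass, since restrictions of polynomials in $\mathbb{R}^3$ separate points and contain constants, and these restrictions are exactly $\mathbb{P}(\mathbb{S}^2)$), choose $p \in \mathbb{P}(\mathbb{S}^2)$ with $\|f-p\|_\infty < \varepsilon$. Split
\[
|Q_N(f) - I(f)| \le |Q_N(f-p)| + |Q_N(p) - I(p)| + |I(p-f)| \le M\varepsilon + |Q_N(p)-I(p)| + 4\pi\varepsilon .
\]
Taking $\limsup_{N\to\infty}$ and using \eqref{eq:approximation-property} gives $\limsup_N |Q_N(f)-I(f)| \le (M+4\pi)\varepsilon$. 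Since $\varepsilon$ is arbitrary, $Q_N(f) \to I(f)$.

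\emph{Necessity.} Assume $Q_N(f) \to I(f)$ for every $f \in C(\mathbb{S}^2)$. Then for each fixed $f$ the sequence $(Q_N(f))_N$ is convergent and hence bounded, so $\sup_N |Q_N(f)| < \infty$ pointwise. By the uniform boundedness principle (Banach--Steinhaus) on the Banach space $C(\mathbb{S}^2)$, $\sup_N \|Q_N\| < \infty$. The norm identity turns this into \eqref{eq:polya-cond}.

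The only step needing care is the norm identity, specifically the construction of the sign-matching continuous function and the handling of repeated nodes. Everything else is the standard Banach--Steinhaus / density argument. Note that the hypothesis \eqref{eq:approximation-property} is not needed for necessity.
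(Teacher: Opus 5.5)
The paper does not prove this theorem. It states it as the spherical analogue of P\'olya's 1933 result and relies on that citation.

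Your proof is correct, and it is the standard functional-analytic argument. Sufficiency follows from density of $\mathbb{P}(\mathbb{S}^2)$ in $C(\mathbb{S}^2)$ together with the trivial bound $|Q_N(f)|\le\|\mathbf{w}^{(N)}\|_1\|f\|_\infty$. Necessity follows from Banach--Steinhaus together with the norm identity $\|Q_N\|=\|\mathbf{w}^{(N)}\|_1$. Writing it out this way shows why the ``natural generalization'' to the sphere is legitimate: nothing is used beyond compactness of $\mathbb{S}^2$, finiteness of $\omega$, and density of polynomials. You are also right that \eqref{eq:approximation-property} plays no role in necessity.

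The one thing to tighten is your treatment of repeated nodes. Distinctness is not a mere convenience: without it the ``only if'' direction is false. Suppose a point may be listed twice in the $N$-th rule with weights $N$ and $-N$. These cancel in $Q_N$, so convergence on all of $C(\mathbb{S}^2)$ is unaffected, while $\|\mathbf{w}^{(N)}\|_1\ge 2N$.

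So drop the ``merge coincident nodes'' detour. Merging changes the very quantity appearing in \eqref{eq:polya-cond}, which is exactly the problem. Instead, state distinctness of the nodes of each rule as a standing hypothesis; the paper's notation $X_N=\{\mathbf{x}_1,\ldots,\mathbf{x}_N\}$ for an $N$-point set supplies it.

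Under that hypothesis your construction works as claimed. For example, take $f=\sum_j \operatorname{sign}(w_j^{(N)})\max\{0,\,1-\mathrm{dist}(\cdot,\mathbf{x}_j^{(N)})/\delta\}$ with $\delta$ half the minimal separation of the nodes. The supports meet at most at points where both tents vanish, so $\|f\|_\infty\le 1$ and $Q_N(f)=\|\mathbf{w}^{(N)}\|_1$. Sufficiency needs no such assumption.
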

\cref{thm:polya} demonstrates that finite-stage maximal exactness and strict positivity are sufficient for quadrature convergence, but not necessary. Negative weights are theoretically permissible provided that their overall $\ell_1$-norm remains bounded. Furthermore, the requirement for polynomial exactness is asymptotic. Consequently, designing robust numerical quadratures requires a delicate balance between the approximation accuracy for polynomials and the $\ell_1$-uniform boundedness of the weights \eqref{eq:polya-cond}.

\subsection{An Optimization Approach} \label{sec:framework}
Motivated by Pólya's theorem \cite{polya1933konvergenz}, we propose an optimization approach to focus on computing weights for fixed node configurations rather than designing node placements.

In the optimization models to follow, the objective function $P(\mathbf{w})$ is chosen to promote either approximation accuracy or numerical stability, while the constraints enforce a basic level of exactness on spherical polynomials. The constraints should be underdetermined enough to provide sufficient degrees of freedom for optimizing the weights. Therefore, rather than setting the exactness $n$ close to the algebraic capacity $\lfloor \sqrt{N} - 1 \rfloor$, we restrict it to a lower degree. While various heuristics exist for choosing this degree, we adopt a data-dependent approach. Specifically, we define the \textit{critical degree} $n^+$ as the highest exactness that the specific spatial distribution of $X_N$ can support using nonnegative weights
\begin{equation} \label{eq:critical-exactness}
    n^+ := \max\{n \in \mathbb{N}_0 \,|\, \exists\,\mathbf{w} \in \mathbb{R}^N, \mathbf{w} \ge \mathbf{0},\; \text{s.t.}\, \mathbf{Y}_n\mathbf{w} = \mathbf{b}_n\}.
\end{equation}
This threshold serves two purposes. First, it provides a data-driven measure of the geometric capacity of the node distribution. Since this degree is typically lower than the algebraic limit, the resulting system is not severely ill-conditioned and leaves sufficient degrees of freedom in the null space of $\mathbf{Y}_{n^+}$ for optimization. Second, fixing the exactness at $n^+$ provides a consistent basis for comparing our collocation methods with classical positive quadratures. At this degree, classical positive quadratures are at their limit, so any improvement in performance can be attributed to the choice of the optimization objective.

With the exactness constrained to $n^+$, we seek a weight vector that minimizes the chosen penalty functional $P(\mathbf{w})$, and it is represented by \begin{align} \label{eq:framework}
\begin{aligned}
\min \quad & P(\mathbf{w}) \\
\text{s.t.} \quad & \mathbf{Y}_{n^+} \mathbf{w} = \mathbf{b}_{n^+}, \quad \mathbf{w} \in \mathbb{R}^N.
\end{aligned}
\end{align}
With suitable convex penalty functionals $P(\mathbf{w})$, as mentioned, the model  \eqref{eq:framework} reduces to standard convex optimization problems, such as QP and SDP problems. For (\ref{eq:framework}), a  critical question is how to specify $P(\mathbf{w})$. One possible choice is to consider standard regularizers such as the $\ell_p$-norms. For instance, penalizing the $\ell_1$-norm ($P(\mathbf{w}) = \|\mathbf{w}\|_1$) directly reflects the uniform boundedness requirement in \cref{thm:polya}; this approach is utilized by \cite{devore2019computing}, albeit motivated by a different optimal recovery perspective. However, the $\ell_1$-norm promotes sparsity, so its minimization often yields many zero weights. This discards available observations and may not fully exploit the geometric information of the node set. Alternatively, $\ell_2$-norm minimization is a common heuristic for computing quadrature weights (see, e.g., \cite{le2004galerkin}). Minimizing the Euclidean norm penalizes large individual weights, thereby promoting uniformity. While this strategy works well for uniform point sets, it is less suitable for scattered sites. In the presence of local clusters, a common feature in real-world satellite trajectories, a geometrically faithful quadrature should assign smaller weights to clustered nodes and larger weights to isolated ones. The $\ell_2$-norm inherently resists this necessary geometric variation. These regularizers fall short because they treat $\mathbf{w}$ solely as an abstract algebraic vector in $\mathbb{R}^N$, independent of the actual quadrature $Q[X_N, \mathbf{w}]$. They encode neither the spatial distribution of the scattered sites $X_N$ nor the analytic properties of the target function spaces, offering no theoretical guarantee on performance.

To overcome these limitations, $P(\mathbf{w})$ should be elevated to capture the underlying structures of the approximation problem. To address two fundamental approximation tasks, namely numerical integration and hyperinterpolation, we develop respective methodologies to specify $P(\mathbf{w})$, with all proposed strategies summarized in \cref{tab:strats}. In the subsequent sections, we rigorously formulate these methodologies, derive their theoretically stability guarantees, and provide practical computational realizations.

\begin{table}[htbp]
    \centering
    \caption{Overview and taxonomy of our weight collocation strategies for spherical scattered data.}
    \label{tab:strats}
    \renewcommand{\arraystretch}{1.25}
    \begin{adjustbox}{max width=\textwidth}
    \begin{tabular}{@{}l p{2.1cm} l p{6cm} l@{}}
        \toprule
        Methodology & Strategy & $P(\mathbf{w})$ & Target & Reference \\
        \midrule
        -- & -- & $\|\mathbf{w}\|_2^2$, $\|\mathbf{w}\|_1$, etc. & Weight vector behavior & \Cref{sec:framework} \\
        \midrule
        \multirow{8}{*}{Kernel}  & Discrepancy & $\mathbf{w}^\top \mathbf{K}_s \mathbf{w}$ & Worst-case integration error in smooth Sobolev spaces & \Cref{sec:minimum-discrepancy} \\
        & Truncated discrepancy & $\mathbf{w}^\top \mathbf{K}_s^L \mathbf{w}$ & Worst-case integration error in Sobolev spaces when closed-form kernels are unavailable & \Cref{sec:series-trunc} \\
        & Bandlimited & $\|\bm{\Gamma} (\mathbf{Y}_L \mathbf{w} - \mathbf{b}_L) \|_1 + \lambda R(\mathbf{w})$ & Worst-case integration error in a bandlimited space for highly clustered scattered sites & \Cref{sec:bandlimited} \\
        \midrule
        \multirow{6}{*}{MZ}  & $\chi^2$-divergence & $\|\mathbf{w}\|_{\mathcal{R},2}$ & Stability of hyperinterpolation & \Cref{sec:geometric-aware-regularization} \\
         & Spectral & $\|\mathbf{I}-\mathbf{G}_n[X_N](\mathbf{w})\|_2$ & Accuracy of hyperinterpolation & \Cref{sec:minimal-eta} \\
         & $D$-optimal & $\log\det \mathbf{G}_n[X_N](\mathbf{w})^{-1}$ & Computationally efficient surrogate for spectral collocation & \Cref{sec:well-conditioned-Gram-matrix} \\
         & Unified & $J_n(\mathbf{w}) + (\lambda/2)\|\mathbf{w}\|_{\mathcal{V}, 2}^2$ & Accuracy-stability decomposition of hyperinterpolation error & \Cref{sec:sdp} \\
        \bottomrule
    \end{tabular}
    \end{adjustbox}
\end{table}
\section{Kernel Collocation} \label{sec:kernel}

Our first weight collocation strategy addresses the fundamental task of numerical integration. To ensure generalization beyond polynomials, we evaluate the integration error within Sobolev spaces $\mathbb{H}^s(\mathbb{S}^2)$. In this setting, the reproducing kernel serves as the foundational tool, structurally linking the underlying Sobolev space to the discrete geometry of the scattered sites.

For $s > 1$, both the quadrature $Q[X_N, \mathbf{w}]$ and the integration functional $I$ are bounded linear functionals on $\mathbb{H}^s(\mathbb{S}^2)$. By the Riesz representation theorem, their representers are, respectively
\begin{equation*}
  R_Q = \sum_{j=1}^{N} w_j K_s(\mathbf{x}_j,\cdot)  \qquad
  \text{and} \qquad R_I = \int_{\mathbb{S}^2} K_s(\mathbf{x},\cdot)\,d\omega(\mathbf{x}).
\end{equation*}
For any $f \in \mathbb{H}^s(\mathbb{S}^2)$, the integration error is bounded via the Cauchy--Schwarz inequality
\begin{equation} \label{eq:int-err-cauchy-schwarz}
    |Q[X_N, \mathbf{w}](f) - I(f)| = |\langle R_Q - R_I, f\rangle_{\mathbb{H}^s}| \leq \|f\|_{\mathbb{H}^s} \|R_Q - R_I\|_{\mathbb{H}^s}.
\end{equation}
Assuming the quadrature integrates constant functions exactly, i.e., \eqref{eq:exact-for-const}, it is a standard result in approximation theory \cite{hesse2005worst,brauchart2007numerical} that the norm of the representer difference evaluates to
\begin{equation} \label{eq:discrepancy}
    D_s(X_N, \mathbf{w}) := \|R_Q - R_I\|_{\mathbb{H}^s} = \left(\sum_{i=1}^N \sum_{j=1}^N w_i w_j K_s'(\mathbf{x}_i, \mathbf{x}_j)\right)^{1/2},
\end{equation}
where $K_s'$ is the reproducing kernel $K_s$ \eqref{eq:reproducing-kernel-sobolev} excluding the zero-degree harmonic
\begin{equation} \label{eq:reproducing-kernel-sobolev-trunc}
    K_s'(\mathbf{x}, \mathbf{y}) = \sum_{\ell = 1}^\infty \frac{2\ell+1}{4\pi} a_\ell^{(s)} P_\ell(\langle \mathbf{x}, \mathbf{y}\rangle).
\end{equation}
The expression $D_s(X_N, \mathbf{w})$ is defined as the \textit{generalized discrepancy} of quadrature $Q[X_N, \mathbf{w}]$ in $\mathbb{H}^s(\mathbb{S}^2)$ \cite{cui1997equidistribution,freeden1998constructive}. Substituting \eqref{eq:discrepancy} back into \eqref{eq:int-err-cauchy-schwarz} yields the celebrated Koksma--Hlawka inequality in Sobolev spaces \cite[Theorem~3.1]{cui1997equidistribution}
\begin{equation*}
    |Q[X_N, \mathbf{w}](f) - I(f)| \leq \|f\|_{\mathbb{H}^s} D_s(X_N, \mathbf{w}) \quad \forall\, f \in \mathbb{H}^s(\mathbb{S}^2).
\end{equation*}
This inequality establishes the generalized discrepancy $D_s(X_N, \mathbf{w})$ as the worst-case integration error over the unit ball in $\mathbb{H}^s(\mathbb{S}^2)$. The notion of generalized discrepancy has proven to be useful in the quasi-Monte Carlo (QMC) settings \cite{sloan2026qmc}. This motivates us to develop a deterministic kernel collocation methodology that minimizes this discrepancy for numerical integration within Sobolev spaces.

\subsection{Minimum Discrepancy Quadratures in Sobolev Spaces} \label{sec:minimum-discrepancy}

To improve the performance of quadratures, we seek to directly minimize this worst-case error. The square of the generalized discrepancy \eqref{eq:discrepancy} is a quadratic form that separates the components of the approximation problem: the sites $X_N$ enter through the kernel evaluations $K_s'(\mathbf{x}_i, \mathbf{x}_j)$, with the smoothness parameter $s$ controls the weighting of high-frequency components, and the weight vector $\mathbf{w}$ is the optimization variable. To formalize this, we define the \textit{discrepancy matrix} of $X_N$ as
\begin{equation*}
    \mathbf{K}_s := [K_s'(\mathbf{x}_i, \mathbf{x}_j)]_{i, j = 1}^N.
\end{equation*}
Because the infinite series \eqref{eq:reproducing-kernel-sobolev-trunc} defining $K_s'$ is uniformly and absolutely convergent for $s > 1$, the matrix $\mathbf{K}_s$ is well-defined and real-valued in these cases. The treatment for low-smoothness cases $0 \leq s \leq 1$, which do not admit continuous reproducing kernels, will be recovered later via series truncation.

Using the discrepancy matrix, we pose the following \textit{discrepancy collocation} model:
\begin{align} \label{eq:discrepancy-collocation}
  \begin{aligned}
    \min \quad & \mathbf{w}^\top \mathbf{K}_s \mathbf{w} \\
    \text{s.t.} \quad & \mathbf{Y}_{n^+} \mathbf{w} = \mathbf{b}_{n^+}, \quad \mathbf{w} \in \mathbb{R}^N.
  \end{aligned}
\end{align}
The quadrature derived from the discrepancy collocation \eqref{eq:discrepancy-collocation} is optimal in the worst-case sense within the Sobolev spaces $\mathbb{H}^s(\mathbb{S}^2)$ among all quadratures exact of degree $n^+$. Because the Hessian matrix $\mathbf{K}_s$ is symmetric positive definite (cf. \cref{prop:positive-definiteness-discrepancy-matrix}), discrepancy collocation \eqref{eq:discrepancy-collocation} is a standard strictly convex QP with linear equality constraints, which can be solved efficiently using off-the-shelf solvers.

\begin{proposition} \label{prop:positive-definiteness-discrepancy-matrix}
    Suppose $s > 1$. The discrepancy matrix $\mathbf{K}_s$ is symmetric positive definite.
\end{proposition}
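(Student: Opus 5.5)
The plan is to write the quadratic form $\mathbf{w}^\top\mathbf{K}_s\mathbf{w}$ as a nonnegative series, and then show that it can vanish only for $\mathbf{w}=\mathbf{0}$, using polynomial interpolation at the nodes. We assume, as is implicit throughout, that the nodes $\mathbf{x}_1,\dots,\mathbf{x}_N$ are pairwise distinct. Symmetry of $\mathbf{K}_s$ is immediate, since $K_s'(\mathbf{x},\mathbf{y})$ depends only on $\langle\mathbf{x},\mathbf{y}\rangle$.

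\textbf{Step 1: series representation.} Fix $\mathbf{w}\in\mathbb{R}^N$. By the addition theorem \eqref{eq:addition-theorem}, $K_s'(\mathbf{x},\mathbf{y})=\sum_{\ell\ge1}a_\ell^{(s)}\sum_{k}Y_{\ell,k}(\mathbf{x})Y_{\ell,k}(\mathbf{y})$. For $s>1$ this series converges absolutely, because $|P_\ell|\le1$ and $\sum_\ell(2\ell+1)a_\ell^{(s)}<\infty$. Since the sum over nodes is finite, we may interchange the orders of summation and obtain
\begin{equation*}
\mathbf{w}^\top\mathbf{K}_s\mathbf{w}=\sum_{\ell=1}^\infty a_\ell^{(s)}\sum_{k=-\ell}^{\ell}\Bigl(\sum_{j=1}^N w_jY_{\ell,k}(\mathbf{x}_j)\Bigr)^2\ \ge 0 .
\end{equation*}
This gives positive semidefiniteness.

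\textbf{Step 2: consequence of a vanishing form.} Suppose $\mathbf{w}^\top\mathbf{K}_s\mathbf{w}=0$. Every $a_\ell^{(s)}>0$, so each squared term must vanish:
\begin{equation*}
\sum_{j} w_jY_{\ell,k}(\mathbf{x}_j)=0 \quad\text{for all } \ell\ge1,\ |k|\le\ell .
\end{equation*}
By linearity, $\sum_j w_j p(\mathbf{x}_j)=0$ for every $p\in\mathbb{P}(\mathbb{S}^2)$ whose degree-zero component vanishes, that is, for every polynomial with $\int_{\mathbb{S}^2}p\,d\omega=0$.

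\textbf{Step 3: an interpolating polynomial with zero mean.} This is the main point. The difficulty is that the constant harmonic is missing from $K_s'$, so we cannot simply test against arbitrary polynomials; the test polynomial must have zero mean.

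First, since the nodes are distinct, there is a spherical polynomial $p$ with $p(\mathbf{x}_j)=w_j$ for all $j$. For instance, take a linear combination of the Lagrange-type products
\begin{equation*}
\prod_{i\ne j}\frac{1-\langle\mathbf{x},\mathbf{x}_i\rangle}{1-\langle\mathbf{x}_j,\mathbf{x}_i\rangle},
\end{equation*}
which are well defined because $\langle\mathbf{x}_j,\mathbf{x}_i\rangle<1$ for $i\ne j$.

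Second, let $\tilde p(\mathbf{x}):=\prod_{i=1}^N\bigl(1-\langle\mathbf{x},\mathbf{x}_i\rangle\bigr)$. This is a polynomial that vanishes at every node, is nonnegative on $\mathbb{S}^2$, and is positive off a finite set. Hence $\int_{\mathbb{S}^2}\tilde p\,d\omega>0$.

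Now put $q:=p-c\,\tilde p$ with $c$ chosen so that $\int_{\mathbb{S}^2}q\,d\omega=0$. Then $q(\mathbf{x}_j)=w_j$ for all $j$, and Step 2 applies to $q$:
\begin{equation*}
0=\sum_j w_jq(\mathbf{x}_j)=\sum_j w_j^2=\|\mathbf{w}\|_2^2 .
\end{equation*}
Therefore $\mathbf{w}=\mathbf{0}$, and $\mathbf{K}_s$ is positive definite.
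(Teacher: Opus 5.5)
Your proof is correct, but it follows a different route from the paper. The paper gives no direct argument. After noting symmetry, it writes $K_s'$ as a zonal kernel whose Legendre coefficients vanish at $\ell=0$ and are positive for all $\ell\ge1$. It then invokes the criterion for strictly positive definite zonal kernels on $\mathbb{S}^2$ from \cite[Corollary~6.9]{ron1996strictly}, which only asks that the set of indices with positive coefficients be rich enough in both even and odd integers.

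You instead prove the needed special case from scratch. The addition theorem gives the sum-of-squares form, so a null vector must annihilate every zero-mean spherical polynomial. The explicit zero-mean interpolant $q=p-c\tilde p$ of $\mathbf{w}$ itself then forces $\|\mathbf{w}\|_2^2=0$. The only outside input is the standard fact that restrictions to $\mathbb{S}^2$ of ambient polynomials of degree at most $m$ lie in $\mathbb{P}_m(\mathbb{S}^2)$.

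The cited criterion is much more general, since it tolerates infinitely many vanishing coefficients. Your proof is self-contained and gives a finite-degree statement as a bonus. Since $q\in\mathbb{P}_N(\mathbb{S}^2)$, you only use $a_\ell^{(s)}>0$ for $1\le\ell\le N$. So the same reasoning shows that the truncated matrix $\mathbf{K}_s^L$ is already positive definite for every $L\ge N$ and every $s\ge0$. This is a purely qualitative complement to the paper's threshold $L=\lceil 55/q_{X_N}\rceil$, which additionally delivers the quantitative bound $\lambda_1\ge a_L^{(s)}/(32\pi)$.
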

\begin{proof}
    The symmetry is trivial in view of the definition \eqref{eq:reproducing-kernel-sobolev-trunc}. Note that
    \begin{equation*}
        K_s'(\mathbf{x}, \mathbf{y}) = \sum_{\ell=0}^\infty \frac{2\ell+1}{4\pi} \tilde{a}_\ell^{(s)} P_\ell(\langle \mathbf{x}, \mathbf{y}\rangle),
    \end{equation*}
    where
    \begin{equation*}
        \tilde{a}_\ell^{(s)} := \left\{
            \begin{array}{ll}
               0, & \text{if $\ell = 0$}, \\
               (\ell+1/2)^{-2s}, & \text{if $\ell > 0$}. 
            \end{array}
        \right.
    \end{equation*}
    The set of positive indices is $\mathcal{A} := \{\ell \in \mathbb{N}_0 : \tilde{a}_\ell^{(s)} > 0\} = \mathbb{N}$. Since $\mathbb{N}$ contains arbitrarily long sequences of consecutive even and odd integers, a result in \cite[Corollary~6.9]{ron1996strictly} implies that $K_s'$ is a positive definite kernel\footnote{For historical reasons, some scholars call this a strictly positive definite kernel instead.} on $\mathbb{S}^2$. Consequently, the discrepancy matrix $\mathbf{K}_s$ is positive definite for any set of distinct sites $X_N$.
\end{proof}

\subsection{Spectral Stability} \label{sec:spectral-stability}

While \cref{prop:positive-definiteness-discrepancy-matrix} establishes the strict convexity and well-posedness of \eqref{eq:discrepancy-collocation}, the practical conditioning of the discrepancy matrix is also important. Ill-conditioning can affect the reliability of optimization solvers and the resulting weights. We therefore analyze the condition number of $\mathbf{K}_s$, which depends on both the node geometry and the smoothness index $s$. The main tool used in the spectral analysis is the B-spline kernels.
\begin{definition}[\!\!{\cite[Definition~3]{keiner2007efficient}}] \label{def:bspline}
    Let $\beta \in \mathbb{N}$ be given. The \textit{normalized B-spline} of order $\beta$, $g_\beta:[-1/2, 1/2] \to \mathbb{R}$ is defined by $g_\beta(t) := \beta N_\beta(\beta t + \beta/2)$, 
    where $N_\beta$ denotes the \textit{cardinal B-spline} of order $\beta$. The cardinal B-splines are given by
    \begin{equation*}
        N_{\beta+1}(t) = \int_{t-1}^t N_\beta(\tau)\, d\tau, \;\beta \in \mathbb{N}, \quad\quad N_1(t) = \left\{
            \begin{array}{ll}
                1, & 0 < t < 1, \\
                0, & \text{otherwise}.
            \end{array}
        \right.
    \end{equation*}
    Moreover, we define for $\beta \in \mathbb{N}$ and $n \in \mathbb{N}$ the \textit{B-spline kernel} $B_{\beta, n}:[-1,1]\to\mathbb{R}$ by
    \begin{equation*}
        B_{\beta, n}(t) = \frac{1}{\|g_\beta\|_{1, n}}\sum_{\ell=0}^n (2 - \delta_{\ell, 0})g_\beta\left(\frac{\ell}{2(n+1)}\right)T_\ell(t),
    \end{equation*}
    where $T_\ell(t):=\cos(\ell \arccos(t))$ denotes the Chebyshev polynomial of degree $\ell$, and $\|\cdot\|_{1,n}$ denotes the discrete norm
    \begin{equation*}
        \|g_\beta\|_{1,n}:=\sum_{\ell=-n}^n g_\beta\left(\frac{\ell}{2(n+1)}\right).
    \end{equation*}
\end{definition}

The following lemma establishes the positive semidefiniteness of B-spline kernels and a useful localization property.

\begin{lemma}[\!\!{\cite[Lemma~7]{keiner2007efficient}}] \label{lem:local}
    The B-spline kernel $B_{\beta, n}$ satisfies, for $n \geq \beta - 1$ and $\theta \in (0, \pi]$, the localization property
    \begin{equation} \label{eq:local}
        |B_{\beta, n}(\cos \theta)| \leq c_\beta|(n+1)\theta|^{-\beta}  \quad \hbox{with} \quad c_\beta := \frac{(2^\beta - 1)\zeta(\beta)\beta^\beta}{2^{\beta-1} - \zeta(\beta)\pi^{-\beta}},
    \end{equation}
    where $\zeta(\beta) := \sum_{r=1}^\infty r^{-\beta}$ is the Riemann zeta function. Moreover, it is normalized by $B_{\beta, n}(1) = 1$ and can be represented as 
    \begin{equation*}
        B_{\beta, n}(t) = \sum_{\ell=0}^n \frac{2\ell+1}{4\pi}\alpha_\ell P_\ell(t), \quad t \in [-1, 1]
    \end{equation*}
    with the \textit{positive} Fourier--Legendre coefficients
    \begin{equation*}
        \alpha_\ell := 2\pi \int_{-1}^1 P_\ell(t)B_{\beta, n}(t)\,dt > 0, \quad \ell = 0, \ldots, n.
    \end{equation*}
\end{lemma}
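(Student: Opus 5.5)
The plan has three parts: the normalization $B_{\beta,n}(1)=1$, which is immediate; the localization bound \eqref{eq:local}, which is the main work; and the Legendre expansion with positive coefficients.

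\emph{Normalization.} Since $T_\ell(1)=1$ and $g_\beta$ is even, we have $\sum_{\ell=0}^n (2-\delta_{\ell,0})\,g_\beta(\ell/(2(n+1))) = \sum_{\ell=-n}^n g_\beta(\ell/(2(n+1))) = \|g_\beta\|_{1,n}$. Hence $B_{\beta,n}(1)=1$.

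\emph{Localization.} Write $t=\cos\theta$, so that $B_{\beta,n}(\cos\theta)=\|g_\beta\|_{1,n}^{-1}\sum_{|\ell|\le n} g_\beta(\ell/(2(n+1)))\,e^{i\ell\theta}$. This is a trigonometric sum sampling the compactly supported B-spline $g_\beta$ on the grid $\ell/(2(n+1))$. The condition $n\ge\beta-1$ guarantees that the support of $g_\beta$ fits inside one period of this grid. I would apply the Poisson summation formula to turn the sum into a periodization $2(n+1)\sum_{r\in\mathbb{Z}}\hat g_\beta(2(n+1)(\theta/(2\pi)+r))$, up to normalization. The Fourier transform of the B-spline is explicit:
\begin{equation*}
\hat g_\beta(v)=\Bigl(\operatorname{sinc}(\pi v/\beta)\Bigr)^\beta .
\end{equation*}
Each term is therefore bounded by $(\beta/(\pi|v|))^\beta$. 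Summing over $r$ produces the zeta factor $\zeta(\beta)$. The extra factor $2^\beta-1$ comes from handling the $r=0$ and $r=-1$ terms, where $\theta\in(0,\pi]$ lies within a half-period.

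The normalizing constant $\|g_\beta\|_{1,n}$ must also be bounded from below. By Poisson summation again, it equals $2(n+1)\bigl(1+\sum_{r\ne0}\hat g_\beta(2(n+1)r)\bigr)$. The aliasing terms are controlled using $(\beta/(2\pi(n+1)r))^\beta$, giving a lower bound of the form $2(n+1)\bigl(1-\zeta(\beta)\pi^{-\beta}2^{1-\beta}\bigr)$. This is exactly where the denominator $2^{\beta-1}-\zeta(\beta)\pi^{-\beta}$ of $c_\beta$ originates. Combining the two estimates yields $c_\beta|(n+1)\theta|^{-\beta}$. I expect this step to be the main obstacle, specifically tracking the constants carefully.

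\emph{Positive Legendre coefficients.} Since $B_{\beta,n}$ is a polynomial of degree $n$, it has a finite Legendre expansion, and orthogonality of the $P_\ell$ gives the stated formula for $\alpha_\ell$. For positivity, I would write $T_m=\sum_{\ell\le m,\ \ell\equiv m\ (2)} c_{m,\ell}P_\ell$ with $c_{m,\ell}>0$, which is the classical nonnegativity of the Chebyshev-to-Legendre connection coefficients. Then
\begin{equation*}
\alpha_\ell \;\propto\; \sum_{m\ge \ell,\; m\equiv \ell \ (2)} (2-\delta_{m,0})\,g_\beta\bigl(m/(2(n+1))\bigr)\,c_{m,\ell}.
\end{equation*}
The term $m=\ell$ is strictly positive because $g_\beta>0$ on the open interval $(-1/2,1/2)$ and $\ell/(2(n+1))<1/2$. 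All other terms are nonnegative, so $\alpha_\ell>0$ for $\ell=0,\ldots,n$.
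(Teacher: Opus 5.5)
\textbf{Gap: positivity of the Legendre coefficients.} The paper does not prove this lemma; it cites \cite[Lemma~7]{keiner2007efficient}, so your argument has to stand on its own. Your proof that $\alpha_\ell>0$ rests on a false fact. The Chebyshev-to-Legendre connection coefficients are not nonnegative; the classical positivity runs the other way. It is $P_\ell$ that has positive Chebyshev coefficients: $P_\ell(\cos\theta)=\sum_{k=0}^{\ell}p_kp_{\ell-k}\cos((\ell-2k)\theta)$ with $p_k=\binom{2k}{k}4^{-k}>0$. By contrast, $T_m$ has negative coefficients on every $P_\ell$ with $\ell<m$ and $\ell\equiv m\pmod 2$. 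For example, $T_2=\tfrac43P_2-\tfrac13P_0$, and $\int_{-1}^1T_m(t)\,dt=2/(1-m^2)<0$ for every even $m\ge 2$. So every term with $m>\ell$ in your sum for $\alpha_\ell$ is negative. Positive Chebyshev coefficients alone cannot give positive Legendre coefficients: $1+10T_2$ has positive Chebyshev coefficients, yet $\int_{-1}^1(1+10T_2)\,dt=-14/3$. Positive definiteness on the circle does not transfer to $\mathbb{S}^2$.

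\textbf{The missing ingredient and a repair.} What you never use is that $g_\beta$ is nonincreasing on $[0,1/2]$, since the centred B-spline is symmetric and unimodal. Hence $a_m:=g_\beta(m/(2(n+1)))$, $0\le m\le n$, is positive and nonincreasing. Define
\[
D_J(\cos\theta):=\sum_{0\le m\le J,\ m\equiv J\,(\mathrm{mod}\,2)}(2-\delta_{m,0})\cos(m\theta)=\frac{\sin((J+1)\theta)}{\sin\theta},
\]
with $D_{-1}=D_{-2}:=0$, so that $(2-\delta_{m,0})T_m=D_m-D_{m-2}$.

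Take $J\ge\ell$ with $J\equiv\ell\pmod 2$. Use the positive cosine expansion of $P_\ell$ and the identity $\int_0^\pi\sin(a\theta)\cos(b\theta)\,d\theta=2a/(a^2-b^2)$ for $a+b$ odd. This gives $\int_{-1}^1D_JP_\ell\,dt=\int_0^\pi\sin((J+1)\theta)P_\ell(\cos\theta)\,d\theta>0$. For $J<\ell$ the integral vanishes by orthogonality. Let $m^*$ be the largest $m\le n$ with $m\equiv\ell\pmod 2$. Summation by parts then gives
\[
\frac{\|g_\beta\|_{1,n}}{2\pi}\,\alpha_\ell=\sum_{\substack{m\equiv\ell\ (\mathrm{mod}\ 2)\\ m\le m^*-2}}(a_m-a_{m+2})\int_{-1}^1D_mP_\ell\,dt+a_{m^*}\int_{-1}^1D_{m^*}P_\ell\,dt>0 .
\]
This is the argument that must replace your termwise one.

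\textbf{The rest of your proposal is sound.} The normalization step is correct. The localization step via Poisson summation with $\hat g_\beta(v)=(\sin(\pi v/\beta)/(\pi v/\beta))^\beta$ reproduces $c_\beta$ exactly. Two small corrections apply.
\begin{itemize}
\item Where $2^\beta-1$ comes from: put $x=\theta/(2\pi)\in(0,1/2]$. Each term of $\sum_{r\in\mathbb{Z}}\bigl(x/|r-x|\bigr)^{\beta}$ increases in $x$, so the worst case is $\theta=\pi$. There the sum equals $2\sum_{k\text{ odd}}k^{-\beta}=(2^\beta-1)\zeta(\beta)/2^{\beta-1}$, and this, not the terms $r=0,-1$ alone, is the source of the factor $2^\beta-1$.
\item The role of $n\ge\beta-1$: it is used only to get $(\beta/(2\pi(n+1)))^\beta\le(2\pi)^{-\beta}$ in the lower bound on $\|g_\beta\|_{1,n}$. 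It has nothing to do with the support fitting in a period; $\sum_{|\ell|\le n}=\sum_{\ell\in\mathbb{Z}}$ holds for every $n$ because $g_\beta$ vanishes outside $(-1/2,1/2)$.
\end{itemize}
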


The localization bound \eqref{eq:local} is not defined for $\theta = 0$ and is not integrable over $[0, \pi]$ for $\beta > 1$. To desingularize the estimate at $\theta = 0$, we establish the following corollary using the uniform boundedness of B-spline kernels.
\begin{lemma} \label{lem:shifted-local}
    For $n \geq \beta - 1$ and $\theta \in [0, \pi]$, it holds that
   \begin{equation} \label{eq:shifted-local}
    |B_{\beta,n}(\cos\theta)| \leq c_\beta' \, |1+(n+1)\theta|^{-\beta}, \quad \text{where } c_\beta' := \bigl(1 + c_\beta^{1/\beta}\bigr)^\beta.
\end{equation}
\end{lemma}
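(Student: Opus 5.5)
We combine two bounds on $|B_{\beta,n}(\cos\theta)|$ and split according to the size of $(n+1)\theta$. The first is the uniform bound $|B_{\beta,n}(t)|\le 1$ on $[-1,1]$. By \cref{lem:local}, the Fourier--Legendre coefficients $\alpha_\ell$ are positive, and $|P_\ell(t)|\le 1$ on $[-1,1]$. Hence
\begin{equation*}
|B_{\beta,n}(t)| \le \sum_{\ell=0}^n \frac{2\ell+1}{4\pi}\alpha_\ell |P_\ell(t)| \le \sum_{\ell=0}^n \frac{2\ell+1}{4\pi}\alpha_\ell = B_{\beta,n}(1) = 1,
\end{equation*}
using the normalization $B_{\beta,n}(1)=1$ from \cref{lem:local}. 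The second bound is the localization estimate \eqref{eq:local}, valid for $\theta\in(0,\pi]$ and $n\ge\beta-1$.

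Set $u:=(n+1)\theta\ge 0$ and $r:=c_\beta^{1/\beta}$, so that $c_\beta'=(1+r)^\beta$.

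\textbf{Case $u\le r$.} This case includes $\theta=0$. We use the uniform bound:
\begin{equation*}
|B_{\beta,n}(\cos\theta)|\le 1 = \frac{(1+u)^\beta}{(1+u)^\beta} \le \frac{(1+r)^\beta}{(1+u)^\beta} = c_\beta'(1+u)^{-\beta}.
\end{equation*}

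\textbf{Case $u>r$.} Here $\theta>0$, so \eqref{eq:local} applies and gives $|B_{\beta,n}(\cos\theta)|\le c_\beta u^{-\beta}=r^\beta u^{-\beta}$. It remains to show $r^\beta u^{-\beta}\le (1+r)^\beta (1+u)^{-\beta}$. This is equivalent to $r(1+u)\le (1+r)u$, i.e. $r\le u$, which holds in this case. Hence again $|B_{\beta,n}(\cos\theta)|\le c_\beta'(1+u)^{-\beta}$.

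Since the two cases cover all $\theta\in[0,\pi]$, \eqref{eq:shifted-local} follows. The only step needing care is the uniform bound $|B_{\beta,n}|\le 1$. It rests on the positivity of the coefficients $\alpha_\ell$ and on $|P_\ell|\le 1$, both of which are available from \cref{lem:local} and standard facts about Legendre polynomials. Everything else is elementary algebra.
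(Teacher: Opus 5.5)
Your proof is correct and follows the paper's route: you take the uniform bound $|B_{\beta,n}|\le 1$ (from positivity of the $\alpha_\ell$, $|P_\ell|\le 1$ and $B_{\beta,n}(1)=1$) and combine it with \eqref{eq:local}. Your explicit case split at $(n+1)\theta = c_\beta^{1/\beta}$ supplies the verification the paper leaves implicit, namely that $c_\beta'$ bounds $\min\{1, c_\beta|(n+1)\theta|^{-\beta}\}\,|1+(n+1)\theta|^{\beta}$.
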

\begin{proof}
    For any $\theta \in [0, \pi]$, we have
    \begin{equation} \label{eq:boundedness-of-B-spline-kernel}
        |B_{\beta, n}(\cos\theta)| \leq \sum_{\ell=0}^n \frac{2\ell+1}{4\pi} \alpha_\ell |P_\ell(\cos\theta)| \leq \sum_{\ell=0}^n \frac{2\ell+1}{4\pi} \alpha_\ell P_\ell(1) = B_{\beta, n}(1) = 1,
    \end{equation}
    where we use the positivity of $\alpha_\ell$ and $|P_\ell(t)|\leq P_\ell(1)$. Combining the bounds \eqref{eq:local} and \eqref{eq:shifted-local} gives
    \begin{equation*}
        |B_{\beta,n}(\cos\theta)| \leq \min\{1, c_\beta|(n+1)\theta|^{-\beta}\}.
    \end{equation*}
    Note that the choice of $c_\beta'$ in \eqref{eq:shifted-local} is made such that
    \begin{equation*}
        c_\beta' = \max_{\theta \in [0, \pi]} \min\{1, c_\beta|(n+1)\theta|^{-\beta}\} |1+(n+1)\theta|^{\beta} \geq |B_{\beta, n}(\cos\theta)||1+(n+1)\theta|^\beta.
    \end{equation*}
    The proof is complete.
\end{proof}

The modified localization bound \eqref{eq:shifted-local} is crucial for establishing bounds of the quadratic form associated with the B-spline kernels. Our proof uses the same Gershgorin argument as in \cite[Theorem~2.4]{narcowich1998stability}.
\begin{theorem} \label{thm:B-sp-ker}
    Let $\beta \in \mathbb{N}$ with  $\beta \geq 3$ and $n \in \mathbb{N}$ with $n \geq \max\{\beta - 1, 1/q_{X_N}\}$. For any set $X_N = \{\mathbf{x}_1, \ldots, \mathbf{x}_N\} \subseteq \mathbb{S}^2$, it holds that
    \begin{align*}
        (1-\gamma) \|\mathbf{c}\|_2^2 \leq \sum_{i=1}^N \sum_{j=1}^N c_ic_j B_{\beta, n}(\langle \mathbf{x}_i, \mathbf{x}_j\rangle) \leq (1+\gamma)\|\mathbf{c}\|_2^2
    \end{align*}
    for every vector $\mathbf{c} = (c_1, \ldots, c_N)^\top \in \mathbb{R}^N$, where
    \begin{equation} \label{eq:gamma}
        \gamma := 25c_\beta' \left[\frac{1}{((n+1)q_{X_N})^\beta} + \frac{1}{(n+1)^2q_{X_N}^2(\beta-1)(\beta-2)}\right].
    \end{equation}
\end{theorem}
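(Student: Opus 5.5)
Following the Gershgorin argument of \cite[Theorem~2.4]{narcowich1998stability}, I will split the quadratic form into its diagonal and off-diagonal parts. By \cref{lem:local}, $B_{\beta,n}(\langle\mathbf{x}_i,\mathbf{x}_i\rangle)=B_{\beta,n}(1)=1$, so the diagonal contributes exactly $\|\mathbf{c}\|_2^2$. For the off-diagonal part, $|c_ic_j|\le (c_i^2+c_j^2)/2$ together with symmetry gives
\begin{equation*}
\Bigl|\sum_{i\neq j} c_ic_j B_{\beta,n}(\langle \mathbf{x}_i,\mathbf{x}_j\rangle)\Bigr| \le \|\mathbf{c}\|_2^2 \max_{i}\sum_{j\neq i} |B_{\beta,n}(\cos\theta_{ij})|,
\end{equation*}
where $\theta_{ij}=\mathrm{dist}(\mathbf{x}_i,\mathbf{x}_j)$. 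It therefore suffices to show that for every fixed $i$, $\sum_{j\ne i}|B_{\beta,n}(\cos\theta_{ij})|\le\gamma$.

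To bound the row sum, I fix $i$ and partition the other nodes into spherical annuli (shells) around $\mathbf{x}_i$:
\begin{equation*}
A_m:=\{j\ne i : m q \le \theta_{ij} < (m+1)q\}, \qquad q:=q_{X_N},\ m=1,2,\dots.
\end{equation*}
No node has $\theta_{ij}<q$, so $m\ge 1$ covers everything. The caps $\mathcal{C}(\mathbf{x}_j,q/2)$ are pairwise disjoint, and for $j\in A_m$ they lie in the annulus between radii $(m-\tfrac12)q$ and $(m+\tfrac32)q$. Comparing areas via \eqref{eq:area-sph-cap} and using $\sin x\ge 2x/\pi$ and $\sin x\le x$, I get $\#A_m\le C\,m$ with an explicit constant. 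Roughly, the ratio is $\bigl((m+\tfrac32)^2-(m-\tfrac12)^2\bigr)/(1/4)\sim 8(2m+1)$, and the constant must be tracked carefully so that it produces the factor $25$. In the same shell, \cref{lem:shifted-local} gives $|B_{\beta,n}(\cos\theta_{ij})|\le c_\beta'(1+(n+1)mq)^{-\beta}$. This is where $n\ge\beta-1$ is used.

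Summing, the row sum is at most $C c_\beta'\sum_{m\ge1} m(1+(n+1)qm)^{-\beta}$. I will separate the $m=1$ term, which is bounded by $((n+1)q)^{-\beta}$ and accounts for the first term of $\gamma$. For $m\ge2$, I will use monotonicity of the summand to compare with an integral. Writing $u=(n+1)q$, the bound is
\begin{equation*}
\int_1^\infty \frac{x\,dx}{(1+ux)^\beta}\le \frac{1}{u^2}\int_0^\infty \frac{y\,dy}{(1+y)^\beta}=\frac{1}{u^2(\beta-1)(\beta-2)}.
\end{equation*}
This needs $\beta\ge3$ for convergence and produces exactly the second term of $\gamma$. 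The hypothesis $n\ge 1/q$, i.e.\ $u\ge1$, is used to handle monotonicity and the $m=1$ term cleanly, and to keep constants uniform.

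The main obstacle is bookkeeping the shell-counting constant so that it matches the stated factor $25$. This requires handling the area comparison on the sphere, where $\sin$ distorts radii and shells near the antipode must be treated separately. Since $\theta\le\pi$, there are only finitely many shells, and the upper-radius capping only decreases the counts. I would first try the crude bounds $\sin(r/2)\ge r/\pi$ and $\sin(r/2)\le r/2$, with $r\le\pi$, and then check that the resulting constant is at most $25$. If it is not, I would refine the comparison, for instance by grouping $m=1$ separately with a direct packing bound.
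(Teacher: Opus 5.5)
Your proposal is the paper's proof: it uses the Gershgorin row-sum bound with $B_{\beta,n}(1)=1$, shells of width $q_{X_N}$ around the maximizing node, the count $25k$ per shell, the shifted localization \eqref{eq:shifted-local}, the isolated first term, and the integral comparison for $k\ge 2$ (monotone because $(n+1)q_{X_N}\ge 1$), which produces $1/((n+1)^2q_{X_N}^2(\beta-1)(\beta-2))$. The one ingredient you leave open, $|A_m|\le 25m$, is not derived in the paper but imported from \cite[Lemma~5]{kunis2007stability}; if you prove it yourself, note that your crude bounds overshoot $25$ (and bounding the outer and inner caps separately even loses the linear growth in $m$), whereas writing the area of the annulus containing the disjoint caps $\mathcal{C}(\mathbf{x}_j,q_{X_N}/2)$, $j\in A_m$, as $4\pi\sin((m+\tfrac12)q_{X_N})\sin q_{X_N}$ and using $|\sin(kx)|\le k|\sin x|$ with $x=q_{X_N}/4$ gives $|A_m|\le 4(4m+2)\le 24m$ whenever $(m+\tfrac32)q_{X_N}\le\pi$, with the shell reaching the antipode requiring only an easy separate check.
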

\begin{proof}
    By a Gershgorin argument, the quadratic form
    \begin{equation*}
        \sum_{i=1}^N \sum_{j=1}^N c_ic_j B_{\beta,n}(\langle \mathbf{x}_i, \mathbf{x}_j \rangle)
    \end{equation*}
    is bounded above and below, respectively, by the quantities
    \begin{equation} \label{eq:gersh-bdd}
        (B_{\beta,n}(1) \pm \gamma_0)\|\mathbf{c}\|_2^2 = (1 \pm \gamma_0)\|\mathbf{c}\|_2^2, \quad \text{where} \quad \gamma_0 := \max_{1 \leq j \leq N} \left[\sum_{\substack{i=1 \\ i \neq j}}^N |B_{\beta, n}(\langle \mathbf{x}_i, \mathbf{x}_j\rangle)| \right].
    \end{equation}
    The rest of proof bounds $\gamma_0$ from above. Without loss of generality, by reordering the points in $X_N$ and exploiting the rotational invariance of the kernel on $\mathbb{S}^2$, we may assume that the maximum in $\gamma$ is attained at $j = 1$ and that $\mathbf{x}_1 = (0, 0,1 )^\top$ corresponds to the north pole. 
    
    We start by partitioning the remaining points $X_N \setminus \{\mathbf{x}_1\}$ into disjoint subsets $S_k$ based on their geodesic distance from the north pole $\mathbf{x}_1$. Let $M := \lfloor \pi/q_{X_N}\rfloor$. We define
    \begin{align*}
    S_k &:= \{\mathbf{x} \in X_N \mid kq_{X_N} \leq \text{dist}(\mathbf{x}, \mathbf{x}_1) < (k+1)q_{X_N}\}, \quad 1 \leq k \leq M - 1, \\
    S_M &:= \{\mathbf{x} \in X_N \mid Mq_{X_N} \leq \text{dist}(\mathbf{x}, \mathbf{x}_1) \leq \pi\}.
    \end{align*}
    A standard packing argument \cite[Lemma~5]{kunis2007stability} demonstrates that the cardinalities of these sets satisfy $|S_k| \leq 25k$ for $k = 1, \ldots, M$. Splitting the summation according to the partition and using the localization property \eqref{eq:shifted-local}, we have
    \begin{align}
        \gamma_0 &= \sum_{i=2}^N |B_{\beta, n}(\langle \mathbf{x}_i, \mathbf{x}_1 \rangle)| 
       = \sum_{k=1}^{M} \sum_{\mathbf{x}_i \in S_k} |B_{\beta, n}(\langle \mathbf{x}_i, \mathbf{x}_1\rangle)| \notag \\
       &\leq \sum_{k=1}^M |S_k| \max_{\mathbf{x} \in S_k} |B_{\beta, n}(\langle \mathbf{x}, \mathbf{x}_1\rangle)| 
       \leq \sum_{k=1}^M 25k \, c_\beta' \, | 1+(n+1)kq_{X_N} |^{-\beta}. \label{eq:estimate-for-gamma}
\end{align}
    Under the assumptions $nq_{X_N} \geq 1$ and $\beta \geq 3$, the function $x \mapsto x/|1+(n+1)q_{X_N}x|^\beta$ is monotonically decreasing on $[1, \infty)$, so the sum (excluding the first term) can be bounded by comparison with its corresponding integral as the following:
    \begin{align*}
        \sum_{k=1}^M \frac{k}{|1+(n+1)kq_{X_N}|^\beta} & = \frac{1}{|1+(n+1)q_{X_N}|^\beta} + \sum_{k=2}^M \frac{k}{|1+(n+1)kq_{X_N}|^\beta} \\
        & \leq \frac{1}{((n+1)q_{X_N}))^\beta} + \int_0^\infty \frac{x}{|1+(n+1)q_{X_N}x|^\beta} \,dx \\
        & = \frac{1}{((n+1)q_{X_N})^\beta} + \frac{1}{(n+1)^2q_{X_N}^2(\beta-1)(\beta-2)}.
    \end{align*}
    Substituting this estimate in \eqref{eq:estimate-for-gamma} and applying it to \eqref{eq:gersh-bdd}, we obtain the desired $\gamma$ in \eqref{eq:gamma} and the proof is complete.
\end{proof}

Suppose $s > 1$. Let $0 < \lambda_1 \leq \lambda_2 \leq \cdots \leq \lambda_N$ denote the eigenvalues of $\mathbf{K}_s$. To bound the condition number $\mathrm{cond}(\mathbf{K}_s) = \lambda_N/\lambda_1$, we analyze the extremal eigenvalues separately. Let us begin with the largest eigenvalue $\lambda_N$.
\begin{proposition} \label{prop:lambda-N}
We have
    \begin{equation*}
     \frac{3}{4\pi}\left(\frac{4}{9}\right)^s  \leq \frac{1}{2\pi} \sum_{\ell=1}^\infty \left(\ell+\frac{1}{2}\right)^{1-2s} \leq \lambda_N \leq \frac{N}{2\pi} \sum_{\ell=1}^\infty \left(\ell+\frac{1}{2}\right)^{1-2s} \leq \frac{\pi}{4q_{X_N}^2} \frac{4^s}{s-1}.
\end{equation*}
\end{proposition}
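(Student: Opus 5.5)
The plan is to identify the common diagonal entry of $\mathbf{K}_s$ and sandwich $\lambda_N$ between this entry and $N$ times it. The outer two inequalities then reduce to an elementary series estimate and a packing bound on $N$ in terms of $q_{X_N}$.

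\emph{Diagonal entries.} By \eqref{eq:reproducing-kernel-sobolev-trunc} with $a_\ell^{(s)}=(\ell+1/2)^{-2s}$ and $P_\ell(1)=1$, every diagonal entry of $\mathbf{K}_s$ equals
\begin{equation*}
d_s := K_s'(\mathbf{x}_j,\mathbf{x}_j) = \sum_{\ell=1}^\infty \frac{2\ell+1}{4\pi}\Bigl(\ell+\frac12\Bigr)^{-2s} = \frac{1}{2\pi}\sum_{\ell=1}^\infty \Bigl(\ell+\frac12\Bigr)^{1-2s},
\end{equation*}
which is finite since $s>1$.

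\emph{Bounds on $\lambda_N$.} For the lower bound, I would use the Rayleigh quotient with a standard basis vector: $\lambda_N \ge \mathbf{e}_j^\top \mathbf{K}_s \mathbf{e}_j = d_s$. For the upper bound, \cref{prop:positive-definiteness-discrepancy-matrix} makes all eigenvalues positive, so $\lambda_N \le \operatorname{tr}\mathbf{K}_s = N d_s$. Alternatively, Gershgorin together with $|P_\ell(t)|\le 1$ gives $|K_s'(\mathbf{x}_i,\mathbf{x}_j)|\le d_s$, which yields the same bound.

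\emph{Leftmost inequality.} Keep only the $\ell=1$ term of $d_s$:
\begin{equation*}
d_s \ge \frac{1}{2\pi}\Bigl(\frac32\Bigr)^{1-2s} = \frac{3}{4\pi}\Bigl(\frac49\Bigr)^s.
\end{equation*}

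\emph{Rightmost inequality.} Two estimates are needed.
\begin{itemize}
\item Since $x\mapsto x^{1-2s}$ is decreasing, each term $(\ell+1/2)^{1-2s}$ is at most $\int_{\ell-1/2}^{\ell+1/2}x^{1-2s}\,dx$. Summing gives
\begin{equation*}
\sum_{\ell\ge1}\Bigl(\ell+\frac12\Bigr)^{1-2s} \le \int_{1/2}^\infty x^{1-2s}\,dx = \frac{2^{2s-2}}{2s-2} = \frac{4^s}{8(s-1)}.
\end{equation*}
\item Bound $N$ by packing. The caps $\mathcal{C}(\mathbf{x}_j,q_{X_N}/2)$ are pairwise disjoint, and by \eqref{eq:area-sph-cap} each has area $4\pi\sin^2(q_{X_N}/4)$. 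Hence $N\sin^2(q_{X_N}/4)\le 1$. Since $q_{X_N}/4\le\pi/4\le\pi/2$, Jordan's inequality gives $\sin(q_{X_N}/4)\ge q_{X_N}/(2\pi)$, so $N\le 4\pi^2/q_{X_N}^2$.
\end{itemize}
Combining the two,
\begin{equation*}
\frac{N}{2\pi}\sum_{\ell\ge1}\Bigl(\ell+\frac12\Bigr)^{1-2s} \le \frac{4\pi^2}{q_{X_N}^2}\cdot\frac{4^s}{16\pi(s-1)} = \frac{\pi}{4q_{X_N}^2}\,\frac{4^s}{s-1},
\end{equation*}
which is exactly the stated bound.

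The only point needing care is matching the constants exactly. This requires using the midpoint-shifted integral comparison starting at $1/2$ rather than $1$, and the packing bound with caps of radius $q_{X_N}/2$ combined with Jordan's inequality. I expect the constants to come out right with these choices.
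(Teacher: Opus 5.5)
Your proposal is correct and follows essentially the same route as the paper: the constant diagonal entry (equivalently the trace) sandwiches $\lambda_N$, the $\ell=1$ term gives the leftmost bound, and an integral comparison combined with the cap-packing and Jordan estimate $N\le 4\pi^2/q_{X_N}^2$ gives the rightmost bound. The differences are cosmetic. You lower-bound $\lambda_N$ by the Rayleigh quotient at $\mathbf{e}_j$ rather than by the mean of the eigenvalues, and you write the integral as $\int_{1/2}^\infty x^{1-2s}\,dx$ instead of $\int_0^\infty (x+1/2)^{1-2s}\,dx$; in both cases the two quantities coincide.
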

\begin{proof}
    Since $P_\ell(1) = 1$ for all $\ell \in \mathbb{N}$, we have
    \begin{equation} \label{eq:trace-exp}
        \sum_{k=1}^N \lambda_k = \mathrm{Tr}(\mathbf{K}_s) = \sum_{k=1}^N K_s'(\mathbf{x}_k, \mathbf{x}_k) = \frac{N}{2\pi} \sum_{\ell=1}^\infty \left(\ell+\frac{1}{2}\right)^{1-2s}.
    \end{equation}
    Given that the eigenvalues are positive, the inequality $\lambda_N \leq \sum_{k=1}^N \lambda_k$ trivially establishes the upper bound. Conversely, the lower bound follows from the fact that the maximum eigenvalue must be at least as large as the mean of the eigenvalues, $\lambda_N \geq (\sum_{k=1}^N \lambda_k)/N$. The leftmost estimate is then obtained by retaining only the initial term ($\ell = 1$) of the resulting series.

    To derive the rightmost estimate, we first bound $N$ from above via a packing argument. Since interiors of the spherical caps $\{\mathcal{C}(\mathbf{x}_j, q_{X_N}/2)\}_{j=1}^N$ are pairwise disjoint, summing their areas (cf. \eqref{eq:area-sph-cap}) yields
    \begin{equation*}
        \sum_{j=1}^N |\mathcal{C}(\mathbf{x}_j, q_{X_N}/2)| = N \cdot 4
        \pi\sin^2(q_{X_N}/4) \leq |\mathbb{S}^2| = 4\pi.
    \end{equation*}
    We note that $0 \leq q_{X_N}/4 \leq \pi/4$. Applying the half-angle identity and Jordan's inequality, $\sin \theta \geq (2/\pi)\theta$ for $\theta \in [0, \pi/2]$, we find that
    \begin{equation} \label{eq:N-upper-bound}
        N \leq \frac{1}{\sin^2(q_{X_N}/4)} \leq \frac{4\pi^2}{q_{X_N}^2}.
    \end{equation}
    Finally, we bound the infinite series by its corresponding integral
    \begin{equation*}
        \sum_{\ell=1}^\infty \left(\ell + \frac{1}{2}\right)^{1-2s} \leq \int_0^\infty \left(x + \frac{1}{2}\right)^{1-2s} \,dx = \frac{2^{2s-3}}{s-1} .
    \end{equation*}
    Combining the bounds for $N$ and the series into \eqref{eq:trace-exp}, we obtain
    \begin{equation*}
        \lambda_N \leq \frac{1}{2\pi} \left( \frac{4\pi^2}{q_{X_N}^2} \right) \left( \frac{2^{2s-3}}{s-1} \right) = \frac{\pi}{4q_{X_N}^2} \frac{4^s}{s-1},
    \end{equation*}
    which completes the proof.
\end{proof}

To bound the smallest eigenvalue $\lambda_1$ from below, we employ the surrogate B-spline kernel argument introduced in \cite{narcowich1998stability}. Utilizing the refined localization estimates of the B-spline kernels \eqref{eq:shifted-local}, we are able to establish a tighter bound dependent on the separation distance $q_{X_N}$. 
\begin{proposition} \label{prop:lambda-1-lower-bound}
Denote $L:=\lceil 55/q_{X_N} \rceil$. It holds that
\begin{equation*}
    \lambda_1 \geq \frac{1}{32\pi} a_{L}^{(s)}.
\end{equation*}
\end{proposition}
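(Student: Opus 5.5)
We follow the surrogate-kernel argument of \cite{narcowich1998stability}. The idea is to find a B-spline kernel that is dominated by $K_s'$ up to a multiple of the zero-degree term, and then use \cref{thm:B-sp-ker} to bound its quadratic form from below.

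Fix $\beta=3$ and $n=L-1$. Then $n+1=L\ge 55/q_{X_N}$, and since $q_{X_N}\le\pi$ we have $n\ge 2=\beta-1$ as well as $n\ge 1/q_{X_N}$. By \cref{lem:local},
\begin{equation*}
    B_{3,n}(t)=\sum_{\ell=0}^n\frac{2\ell+1}{4\pi}\alpha_\ell P_\ell(t), \qquad \alpha_\ell>0 .
\end{equation*}
We bound the coefficients from above. Taking $t=1$ in \eqref{eq:boundedness-of-B-spline-kernel} gives $\sum_{\ell}\frac{2\ell+1}{4\pi}\alpha_\ell=1$, and therefore $\alpha_\ell\le 4\pi/(2\ell+1)$. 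A simpler bound, which we prefer, comes from $|B_{3,n}|\le 1$:
\begin{equation*}
    \alpha_\ell=2\pi\int_{-1}^1 P_\ell B_{3,n}\,dt\le 2\pi\int_{-1}^1|P_\ell|\,dt\le 4\pi .
\end{equation*}
On the other side, $a_\ell^{(s)}$ is decreasing, so $a_\ell^{(s)}\ge a_n^{(s)}\ge a_L^{(s)}$ for $1\le\ell\le n$. Hence for $1\le \ell\le n$,
\begin{equation*}
    \tilde a_\ell^{(s)}\ge \frac{a_L^{(s)}}{4\pi}\,\alpha_\ell .
\end{equation*}

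For $\mathbf{c}\in\mathbb{R}^N$ we compare quadratic forms term by term using the addition theorem \eqref{eq:addition-theorem}. Each degree $\ell$ contributes a nonnegative amount to both forms, namely the corresponding coefficient times $\sum_k\bigl(\sum_i c_iY_{\ell,k}(\mathbf{x}_i)\bigr)^2$. Therefore
\begin{equation*}
    \mathbf{c}^\top\mathbf{K}_s\mathbf{c}\;\ge\;\frac{a_L^{(s)}}{4\pi}\left(\sum_{i,j}c_ic_jB_{3,n}(\langle\mathbf{x}_i,\mathbf{x}_j\rangle)-\frac{\alpha_0}{4\pi}\Bigl(\sum_i c_i\Bigr)^2\right).
\end{equation*}
The subtracted term accounts for the missing degree-zero harmonic in $K_s'$.

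\textbf{Main obstacle: the degree-zero term.} Here $\alpha_0/(4\pi)=\frac{1}{2}\int_{-1}^1 B_{3,n}\,dt$ is small by localization. The difficulty is that $(\sum_i c_i)^2$ can be as large as $N\|\mathbf{c}\|_2^2$, which would destroy the bound. We plan to avoid this at the level of the matrix rather than the quadratic form. Since $K_s'$ differs from the full kernel only by a rank-one term, we would first prove the lower bound for the full-kernel matrix, where the comparison is clean with no subtraction. Then we use eigenvalue interlacing to control the rank-one removal.

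If interlacing only yields $\lambda_2$, we fall back on a second route. We would replace $B_{3,n}$ by a modified kernel whose zero coefficient is removed, $B_{3,n}-\alpha_0/(4\pi)$. Its Gershgorin bound degrades by at most $N\alpha_0/(4\pi)$. The quantity $N\alpha_0$ is controlled by \eqref{eq:N-upper-bound} together with the estimate $\int_0^\pi|B_{3,n}(\cos\theta)|\sin\theta\,d\theta\lesssim c_3'/(n+1)^2$, which follows from \eqref{eq:shifted-local}. The resulting extra error is a constant times $1/((n+1)q_{X_N})^2\le 55^{-2}$ times that constant.

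\textbf{Closing the estimate.} It remains to check numerically that, with $(n+1)q_{X_N}\ge 55$, the constant $\gamma$ in \eqref{eq:gamma} for $\beta=3$ satisfies $1-\gamma-(\text{degree-zero correction})\ge 1/8$. This involves $c_3'=(1+c_3^{1/3})^3$ and the factor $25/2$. Combining this with the factor $a_L^{(s)}/(4\pi)$ gives
\begin{equation*}
    \lambda_1\ge \frac{1}{32\pi}\,a_L^{(s)} .
\end{equation*}
The choice of the constant $55$ is presumably calibrated exactly so that this arithmetic works.
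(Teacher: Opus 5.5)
Your fallback route is the paper's proof. Bound $\alpha_\ell\le 4\pi$ to get $a_\ell^{(s)}\ge a_L^{(s)}\alpha_\ell/(4\pi)$, obtain $\tfrac12\|\mathbf{c}\|_2^2$ for the B-spline kernel from \cref{thm:B-sp-ker}, and absorb the degree-zero term through $(\sum_i c_i)^2\le N\|\mathbf{c}\|_2^2$ with $N\alpha_0/(4\pi)\le \pi^2c_3'/55^2$. So that Cauchy--Schwarz bound is not an obstacle, and you should drop the interlacing idea, since it only controls $\lambda_2$.

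The arithmetic you left unchecked does close, and only barely: $c_3'\approx 114.5$ gives $\gamma\le 0.491$ and a degree-zero correction $\le 0.374$, hence the factor $1/8$. Taking $n=L-1$ rather than the paper's $n=L$ changes nothing.
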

\begin{proof}
    The choice of $L$ is made specifically so that 
    \begin{equation*}
        25c_3'\left[\frac{1}{((L+1)q_{X_N})^3} + \frac{1}{(L+1)^2q_{X_N}^2(3-1)(3-2)}\right] \leq \frac{25c_3'}{55^2}\left[\frac{1}{55}+\frac{1}{2}\right] \leq \frac{1}{2}.
    \end{equation*}
    Consequently, $B_{3,L}$ becomes a positive definite kernel for which 
    \begin{equation*}
        \sum_{i=1}^N \sum_{j=1}^N c_ic_j B_{3,L}(\langle \mathbf{x}_i, \mathbf{x}_j \rangle) \geq \frac{1}{2} \|\mathbf{c}\|_2^2, \quad \forall\, \mathbf{c} \in \mathbb{R}^N.
    \end{equation*}
    Recall from \cref{lem:local} that $B_{3, L}(t) = \sum_{\ell=0}^{L} \frac{2\ell+1}{4\pi} \alpha_\ell P_\ell(t)$ with $\alpha_\ell > 0$. Because $K_s'$ excludes the zero-degree harmonic, analogously we define
    \begin{equation*}
        B_{3, L}'(t) := \sum_{\ell=1}^{L} \frac{2\ell+1}{4\pi} \alpha_\ell P_\ell(t) = B_{3, L}(t) - \frac{\alpha_0}{4\pi}.
    \end{equation*}
    By the Cauchy--Schwarz inequality, the quadratic form associated with $B_{3, L}'$ can be bounded below as the following:
    \begin{equation*}
        \sum_{i=1}^N \sum_{j=1}^N c_ic_j B_{3,L}'(\langle \mathbf{x}_i, \mathbf{x}_j \rangle) \geq \frac{1}{2} \|\mathbf{c}\|_2^2 - \frac{\alpha_0}{4\pi}\left(\sum_{j=1}^N c_j\right)^2 \geq \left(\frac{1}{2} - \frac{N\alpha_0}{4\pi}\right) \|\mathbf{c}\|_2^2, \quad \forall\, \mathbf{c} \in \mathbb{R}^N.
    \end{equation*}
    It remains to bound $\alpha_0$. By definition, it holds that
    \begin{equation*}
        \alpha_0 = 2\pi \int_{-1}^1 B_{3,L}(t)\,dt = 2\pi \int_0^\pi B_{3,L}(\cos\theta)\sin\theta\,d\theta.
    \end{equation*}
    Utilizing the localization property \eqref{eq:shifted-local} and an elementary approximation to sine, we have
    \begin{equation*}
        \alpha_0 \leq 2\pi c_3'\int_0^\pi \frac{\theta}{(1 + (L + 1)\theta)^3}\,d\theta \leq \frac{2\pi c_3'}{(L+1)^2} \int_0^\infty \frac{u}{(1+u)^3}\,du = \frac{\pi c_3'}{(L+1)^2}.
    \end{equation*}
    Given $N \leq 4\pi^2/q_{X_N}^2$ (cf. \eqref{eq:N-upper-bound}) and $Lq_{X_N} \geq 55$, we can bound the amount of shift as
    \begin{equation*}
        \frac{N\alpha_0}{4\pi} \leq \frac{Nc_3'}{4(L+1)^2} \leq \frac{(4\pi^2/q_{X_N}^2)c_3'}{4(55/q_{X_N})^2} = \frac{\pi^2 c_3'}{55^2} \leq \frac{3}{8}.
    \end{equation*}
    Consequently, the quadratic form for $B_{3,L}$ is bounded below by
    \begin{equation*}
        \sum_{i=1}^N \sum_{j=1}^N c_ic_j B_{3,L}'(\langle \mathbf{x}_i, \mathbf{x}_j \rangle) \geq \left(\frac{1}{2} - \frac{3}{8}\right) \|\mathbf{c}\|_2^2 = \frac{1}{8} \|\mathbf{c}\|_2^2.
    \end{equation*}
    We now compare the coefficients $\alpha_\ell$ of $B_{3,L}'$ with $a_\ell^{(s)}$ of $K_s'$. Since $|P_\ell(t)| \leq 1$ and $|B_{3, L}(t)| \leq 1$ (cf. \eqref{eq:boundedness-of-B-spline-kernel}), for all $1 \leq \ell \leq L$, it holds that
    \begin{equation*}
        \alpha_\ell = 2\pi \int_{-1}^1 P_\ell(t) B_{3, L}(t) \, dt \leq 4\pi.
    \end{equation*}
    Therefore, $a_\ell^{(s)} \geq a_{L}^{(s)} \alpha_\ell/(4\pi)$ for all $1 \leq \ell \leq L$. It follows that
    \begin{equation} \label{eq:surrogate-step}
        \sum_{i=1}^N \sum_{j=1}^N c_ic_j K_s'(\mathbf{x}_i, \mathbf{x}_j) \geq \frac{a_{L}^{(s)}}{4\pi} \sum_{i=1}^N \sum_{j=1}^N c_ic_j B_{3,L}'(\langle \mathbf{x}_i, \mathbf{x}_j \rangle) \geq \frac{a_{L}^{(s)}}{32\pi} \|\mathbf{c}\|_2^2,
    \end{equation}
    which demonstrates a lower bound.
\end{proof}

\begin{remark} \label{rem:reduction}
    The original localization argument necessitates choosing a B-spline kernel of degree $L = O(q_{X_N}^{-2} \log q_{X_N}^{-2})$ \cite[(2.61)]{narcowich1998stability}, resulting in $\lambda_1 = \Omega((q_{X_N}^{-2} \log q_{X_N}^{-2})^{-2s})$. In contrast, our refined analysis reduces the required degree to $L = O(q_{X_N}^{-1})$, yielding the tighter lower bound $\lambda_1 = \Omega(q_{X_N}^{2s})$.
\end{remark}

To complete the spectral estimates, we also need to establish an upper bound for $\lambda_1$.
\begin{proposition} \label{prop:lambda-1-upper-bound}
Suppose $0 < h_{X_N} \leq 2$. Denote $H := \lfloor 2h_{X_N}^{-1} \rfloor \geq 1$. It holds that
\begin{equation*}
    \lambda_1 \leq \frac{6\pi}{q_{X_N}^3}a_{H}^{(s/2)} \frac{2^s}{s-1}.
\end{equation*}
\end{proposition}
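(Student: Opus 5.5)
The plan is to bound $\lambda_1$ from above by a Rayleigh quotient, $\lambda_1 = \min_{\mathbf{c}\neq \mathbf{0}} \mathbf{c}^\top\mathbf{K}_s\mathbf{c}/\|\mathbf{c}\|_2^2$, using a test vector $\mathbf{c}$ that annihilates all low-degree harmonics. Taking such a $\mathbf{c}$ kills the large coefficients $a_\ell^{(s)}$ for small $\ell$. Only the tail $\ell \geq H$ survives, and that tail is small.

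\textbf{Step 1 (existence of the test vector).} By the covering property of the mesh norm, the caps $\mathcal{C}(\mathbf{x}_j, h_{X_N})$ cover $\mathbb{S}^2$. By \eqref{eq:area-sph-cap} this gives $N\cdot 4\pi\sin^2(h_{X_N}/2)\geq 4\pi$, and hence
\begin{equation*}
N \geq \frac{1}{\sin^2(h_{X_N}/2)} > \frac{4}{h_{X_N}^2} \geq H^2 = \dim \mathbb{P}_{H-1}(\mathbb{S}^2),
\end{equation*}
where the strict inequality uses $\sin x<x$. Thus $\mathbf{Y}_{H-1}\in\mathbb{R}^{H^2\times N}$ has a nontrivial null space, and we pick $\mathbf{c}\neq\mathbf{0}$ with $\mathbf{Y}_{H-1}\mathbf{c}=\mathbf{0}$. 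The hypothesis $h_{X_N}\le 2$ ensures $H\geq 1$, so the degree-$0$ constraint is included; this is consistent with $K_s'$ lacking the $\ell=0$ term.

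\textbf{Step 2 (tail estimate).} Expanding $K_s'$ via the addition theorem \eqref{eq:addition-theorem} gives
\begin{equation*}
\mathbf{c}^\top\mathbf{K}_s\mathbf{c}=\sum_{\ell\ge H} a_\ell^{(s)}\sum_{k=-\ell}^{\ell}\Bigl(\sum_{j} c_jY_{\ell,k}(\mathbf{x}_j)\Bigr)^2 .
\end{equation*}
The inner sum is $\mathbf{c}^\top \mathbf{P}_\ell \mathbf{c}$ with $\mathbf{P}_\ell=[\tfrac{2\ell+1}{4\pi}P_\ell(\langle\mathbf{x}_i,\mathbf{x}_j\rangle)]$. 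This matrix is positive semidefinite, so its largest eigenvalue is at most its trace, $N(2\ell+1)/(4\pi)$. Hence
\begin{equation*}
\frac{\mathbf{c}^\top\mathbf{K}_s\mathbf{c}}{\|\mathbf{c}\|_2^2}\le \frac{N}{2\pi}\sum_{\ell\ge H}\Bigl(\ell+\tfrac12\Bigr)^{1-2s}.
\end{equation*}
We then insert the packing bound \eqref{eq:N-upper-bound}, $N\le 4\pi^2/q_{X_N}^2$.

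\textbf{Step 3 (matching the stated form).} We factor $(\ell+\tfrac12)^{1-2s}=(\ell+\tfrac12)^{-s}(\ell+\tfrac12)^{1-s}\le a_H^{(s/2)}(\ell+\tfrac12)^{1-s}$ for $\ell\ge H$. The remaining tail $\sum_{\ell\ge H}(\ell+\tfrac12)^{1-s}$ converges only for $s>2$, so it must be handled differently. We bound it by an integral starting at $H-\tfrac12$ and then use $H\le 2/h_{X_N}\le 4/q_{X_N}$, since $h_{X_N}\ge q_{X_N}/2$. This converts leftover positive powers of $H$ into one further factor $q_{X_N}^{-1}$, which is where $q_{X_N}^{-3}$ arises. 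The comparison $(H+\tfrac12)/(H-\tfrac12)$ and the integral $\int x^{1-2s}$ produce the factor $2^s/(s-1)$.

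I expect this constant bookkeeping in Step 3 to be the main obstacle. The point is to split the exponent $2s$ so as to retain exactly $a_H^{(s/2)}$, spend the rest on an $(s-1)^{-1}$-type integral, and have everything else collapse into $6\pi q_{X_N}^{-3}2^s$. If the direct split does not give these constants, the first thing to try is to apply the integral test to the full tail, $\sum_{\ell\ge H}(\ell+\tfrac12)^{1-2s}\le (H)^{2-2s}/(2s-2)$, and then write $H^{2-2s}=H^{-s}\cdot H^{2-s}$, using $H^{2-s}\le H^{2}\cdot 1$ for $s\geq 1$ together with $H\lesssim q_{X_N}^{-1}$.
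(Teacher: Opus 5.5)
Your Steps 1 and 2 are sound and match the paper in substance. The paper uses the same covering and dimension count to produce $\mathbf{c}^*$ annihilating $\mathbb{P}_{H-1}(\mathbb{S}^2)$. Its final appeal to $\lambda_N(\mathbf{K}_t)$ is, once unpacked, your per-degree trace estimate; your version, with the tail starting at $H$, is slightly sharper. The problem is Step 3, which is where the stated bound is actually produced, and as written it does not go through:
\begin{itemize}
\item Your ``direct split'' attaches $a_H^{(s/2)}$ via $(\ell+\tfrac12)^{-s}\le (H+\tfrac12)^{-s}$. That leaves $\sum_{\ell\ge H}(\ell+\tfrac12)^{1-s}$, which diverges for $1<s\le 2$, and bounding it by an integral from $H-\tfrac12$ cannot repair a divergent series.
\item The comparison you name, $(H+\tfrac12)/(H-\tfrac12)$, equals $3$ at $H=1$. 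So it can cost a factor $3^s$, and nothing in your sketch shows how that is absorbed into $2^s$.
\item Your fallback, as literally written, uses $H^{2-s}\le H^2$. Combined with $N\le 4\pi^2/q_{X_N}^2$, this gives $q_{X_N}^{-4}$ rather than $q_{X_N}^{-3}$.
\end{itemize}

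The missing step is to split the exponent the other way round. For $\ell\ge H$ and $s>1$,
\[
\Bigl(\ell+\tfrac12\Bigr)^{1-2s}=\Bigl(\ell+\tfrac12\Bigr)^{1-s}\Bigl(\ell+\tfrac12\Bigr)^{-s}\le \Bigl(H+\tfrac12\Bigr)^{1-s}\Bigl(\ell+\tfrac12\Bigr)^{-s}=\Bigl(H+\tfrac12\Bigr)a_H^{(s/2)}\Bigl(\ell+\tfrac12\Bigr)^{-s}.
\]
The series left over now converges for every $s>1$:
\[
\sum_{\ell\ge 1}\Bigl(\ell+\tfrac12\Bigr)^{-s}\le\int_0^\infty\Bigl(x+\tfrac12\Bigr)^{-s}\,dx=\frac{2^{s-1}}{s-1}.
\]
The single surplus factor $H+\tfrac12\le 2/h_{X_N}+\tfrac12\le 6/q_{X_N}$ supplies the third power of $q_{X_N}^{-1}$; this uses $q_{X_N}\le 2h_{X_N}$ and $q_{X_N}\le\pi$. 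With $N\le 4\pi^2/q_{X_N}^2$ you get
\[
\frac{N}{2\pi}\cdot\frac{6}{q_{X_N}}a_H^{(s/2)}\cdot\frac{2^{s-1}}{s-1}\le \frac{6\pi}{q_{X_N}^3}a_H^{(s/2)}\frac{2^s}{s-1}.
\]
This is exactly the paper's route. There it is phrased as $a_\ell^{(s)}\le (H+\tfrac12)a_H^{(s/2)}a_\ell^{(t)}$ with $t=(s+1)/2$, followed by \cref{prop:lambda-N} applied to $\mathbf{K}_t$.

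Your fallback can also be rescued, and it then gives a slightly better constant. Start from the midpoint/convexity bound
\[
\sum_{\ell\ge H}\Bigl(\ell+\tfrac12\Bigr)^{1-2s}\le \frac{H^{2-2s}}{2s-2}.
\]
Then use $H^{2-s}=H\cdot H^{1-s}\le H\le 4/q_{X_N}$ (not $H^2$) and $H^{-s}\le (3/2)^s a_H^{(s/2)}$. This yields $\frac{4\pi}{q_{X_N}^3}\frac{(3/2)^s}{s-1}a_H^{(s/2)}$, which is below the stated bound.
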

\begin{proof}
    We start by noticing that the spherical caps $\{\mathcal{C}(\mathbf{x}_j, h_{X_N})\}_{j=1}^N$ cover $\mathbb{S}^2$, so that by \eqref{eq:area-sph-cap} we have
    \begin{equation*} 
        \sum_{j=1}^N |\mathcal{C}(\mathbf{x}_j, h_{X_N})| = N \cdot 4\pi \sin^2(h_{X_N}/2) \geq |\mathbb{S}^2| = 4\pi.
    \end{equation*}
    Since $\sin\theta < \theta$ for all $\theta \in (0, \pi]$ and $h_{X_N} > 0$, the preceding inequality implies $N > 4/h_{X_N}^2$. Consequently, as $\mathrm{dim}(\mathbb{P}_{H-1}(\mathbb{S}^2)) = H^2 \leq 4/h_{X_N}^2 < N$, a standard dimensionality argument implies the existence of a nonzero vector $\mathbf{c}^* \in \mathbb{R}^N$ that annihilates all harmonics of degree less than $H$. That is, we have
    \begin{equation} \label{eq:annihilation}
        \sum_{j=1}^N c_j^* Y_{\ell, k}(\mathbf{x}_j) = 0 \quad \forall\, \ell = 0, 1, \ldots, H -1 , \quad |k| \leq \ell.
    \end{equation}
    In particular, since $H > 0$, $\mathbf{c}^*$ annihilates the zero-degree harmonic, i.e., $\sum_{j=1}^N c_j^* = 0$, so that the quadratic form of $K_s'$ evaluates identically to that of $K_s$ with $\mathbf{c}^*$. Using also the addition theorem \eqref{eq:addition-theorem} and the annihilation properties of $\mathbf{c}^*$ \eqref{eq:annihilation} yields
    \begin{equation*}
        \sum_{i=1}^N \sum_{j=1}^N c_i^* c_j^* K_s'(\mathbf{x}_i, \mathbf{x}_j) = \sum_{i=1}^N \sum_{j=1}^N c_i^* c_j^* K_s(\mathbf{x}_i, \mathbf{x}_j) = \sum_{\ell=H}^\infty a_\ell^{(s)} \sum_{k=-\ell}^\ell \left( \sum_{j=1}^N c_j^* Y_{\ell,k}(\mathbf{x}_j) \right)^2.
    \end{equation*}
    Because $s > 1$, we can define $t:=(s+1)/2$, which satisfies $1 < t < s$. For all $\ell \geq H$, we have
    \begin{equation*}
        a_\ell^{(s)} = \left(\ell+\frac{1}{2}\right)^{-(s-1)}\left(\ell+\frac{1}{2}\right)^{-(s+1)} \leq \left(H + \frac{1}{2}\right)^{-(s-1)}a_\ell^{(t)} = \left(H + \frac{1}{2}\right)a_{H}^{(s/2)}a_\ell^{(t)}.
    \end{equation*}
    Applying this bound and the annihilation properties of $\mathbf{c}^*$ \eqref{eq:annihilation} again, we obtain
    \begin{align*}
        \sum_{i=1}^N\sum_{j=1}^N c_i^*c_j^*K_s'(\mathbf{x}_i, \mathbf{x}_j) & \leq \sum_{\ell=H}^\infty \left(H+\frac{1}{2}\right) a_H^{(s/2)} a_\ell^{(t)} \sum_{k=-\ell}^\ell \left(\sum_{j=1}^N c_j^* Y_{\ell,k}(\mathbf{x}_j)\right)^2 \\
        & = \left(H + \frac{1}{2}\right)a_{H}^{(s/2)} \sum_{i=1}^N \sum_{j=1}^N c_i^*c_j^*K_t'(\mathbf{x}_i, \mathbf{x}_j).
    \end{align*}
    Now, we have
    \begin{equation*}
        \lambda_1(\mathbf{K}_s) \leq \frac{(\mathbf{c}^*)^\top \mathbf{K}_s\mathbf{c}^*}{\|\mathbf{c}^*\|_2^2} \leq \left(H + \frac{1}{2}\right)a_{H}^{(s/2)}\frac{(\mathbf{c}^*)^\top \mathbf{K}_t\mathbf{c}^*}{\|\mathbf{c}^*\|_2^2} \leq \left(H + \frac{1}{2}\right)a_{H}^{(s/2)}\lambda_N(\mathbf{K}_t).
    \end{equation*}
    From \cref{prop:lambda-N}, we know
    \begin{equation*}
        \lambda_N(\mathbf{K}_t) \leq \frac{\pi}{4q_{X_N}^2}\frac{4^t}{t-1} =\frac{\pi}{q_{X_N}^2}\frac{2^s}{s-1}.
    \end{equation*}
    By definition, we have $2h_{X_N} \geq q_{X_N}$. With $q_{X_N} \leq \pi < 4$, we can then bound 
    $$
    H + 1/2 \leq 2/h_{X_N} + 1/2 \leq 4/q_{X_N} + 2/q_{X_N} = 6/q_{X_N}.
    $$
    Combining these estimates yields
    \begin{equation*}
        \lambda_1(\mathbf{K}_s) \leq \frac{6}{q_{X_N}} a_{H}^{(s/2)} \left(\frac{\pi}{q_{X_N}^2} \frac{2^s}{s-1}\right) = \frac{6\pi}{q_{X_N}^3} a_{H}^{(s/2)} \frac{2^s}{s-1},
    \end{equation*}
    which completes the proof.
\end{proof}

Piecing these extremal eigenvalue estimates together, we obtain a comprehensive bound for $\mathrm{cond}(\mathbf{K}_s)$ in the smooth regime ($s > 1$).
\begin{theorem} \label{thm:conditioning}
    Suppose $0 < h_{X_N} \leq 2$. Define $L := \lceil 55/q_{X_N} \rceil$ and $H := \lfloor 2/h_{X_N}\rfloor$. For $s > 1$, the condition number of the discrepancy matrix $\mathbf{K}_s$ satisfies
    \begin{equation*}
        \frac{q_{X_N}^3(s-1)}{8\pi^2}\left(\frac{2H+1}{9}\right)^s 
        \leq \mathrm{cond}(\mathbf{K}_s) 
        \leq \frac{8\pi^2}{q_{X_N}^2(s-1)}(2L+1)^{2s}.
    \end{equation*}
\end{theorem}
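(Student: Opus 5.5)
The plan is to combine the four extremal eigenvalue estimates already established, writing $\mathrm{cond}(\mathbf{K}_s) = \lambda_N/\lambda_1$ and bounding the numerator and denominator from the appropriate sides.

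\textbf{Upper bound.} Use the rightmost estimate of \cref{prop:lambda-N}, $\lambda_N \leq \frac{\pi}{4q_{X_N}^2}\frac{4^s}{s-1}$. Use \cref{prop:lambda-1-lower-bound} for $\lambda_1 \geq \frac{1}{32\pi}a_L^{(s)}$, where $a_L^{(s)} = (L+1/2)^{-2s} = 2^{2s}(2L+1)^{-2s}$. Dividing,
\begin{equation*}
\mathrm{cond}(\mathbf{K}_s) \leq \frac{\pi}{4q_{X_N}^2}\frac{4^s}{s-1}\cdot 32\pi\,\frac{(2L+1)^{2s}}{4^s} = \frac{8\pi^2}{q_{X_N}^2(s-1)}(2L+1)^{2s}.
\end{equation*}
The factors of $4^s$ cancel exactly, so this yields the stated constant.

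\textbf{Lower bound.} Use the leftmost estimate $\lambda_N \geq \frac{3}{4\pi}(4/9)^s$ from \cref{prop:lambda-N}. Use \cref{prop:lambda-1-upper-bound}, which applies under the hypothesis $0<h_{X_N}\le 2$ and gives $\lambda_1 \leq \frac{6\pi}{q_{X_N}^3}a_H^{(s/2)}\frac{2^s}{s-1}$, with $a_H^{(s/2)} = (H+1/2)^{-s} = 2^s(2H+1)^{-s}$. Then
\begin{equation*}
\mathrm{cond}(\mathbf{K}_s) \geq \frac{3}{4\pi}\Bigl(\frac{4}{9}\Bigr)^s \cdot \frac{q_{X_N}^3(s-1)(2H+1)^s}{6\pi\, 4^s} = \frac{q_{X_N}^3(s-1)}{8\pi^2}\Bigl(\frac{2H+1}{9}\Bigr)^s.
\end{equation*}
Here too the factors of $4^s$ cancel, since $2^s\cdot 2^s = 4^s$ in the denominator against $4^s$ in the numerator.

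\textbf{Main obstacle.} There is no real obstacle. The only care needed is the bookkeeping of the powers of $2$ when converting $a_L^{(s)}$ and $a_H^{(s/2)}$ into the forms $(2L+1)^{-2s}$ and $(2H+1)^{-s}$, and checking that the hypotheses of each proposition (in particular $h_{X_N}\le 2$ and $s>1$) are exactly those of the theorem.
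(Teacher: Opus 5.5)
Your proposal is correct and matches the paper's argument. Like the paper, you write $\mathrm{cond}(\mathbf{K}_s)=\lambda_N/\lambda_1$, obtain the lower bound from the leftmost estimate of \cref{prop:lambda-N} together with \cref{prop:lambda-1-upper-bound}, and obtain the upper bound from the rightmost estimate of \cref{prop:lambda-N} together with \cref{prop:lambda-1-lower-bound}; your conversions $a_L^{(s)}=4^s(2L+1)^{-2s}$ and $a_H^{(s/2)}=2^s(2H+1)^{-s}$ reproduce the stated constants exactly.
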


\begin{proof}
    Recall that the condition number is defined by $\mathrm{cond}(\mathbf{K}_s) = \lambda_N/\lambda_1$. Bounding the numerator from below (cf. \cref{prop:lambda-N}) and the denominator from above (cf. \cref{prop:lambda-1-upper-bound}) gives
    \begin{equation*}
        \mathrm{cond}(\mathbf{K}_s) \geq \dfrac{\dfrac{3}{4\pi}\left(\dfrac{4}{9}\right)^s}{\dfrac{6\pi}{q_{X_N}^3}\left(H+\dfrac{1}{2}\right)^{-s} \dfrac{2^s}{s-1}} = \dfrac{q_{X_N}^3 (s-1)}{8\pi^2} \left(\dfrac{2H+1}{9}\right)^s.
    \end{equation*}
     Bounding the numerator from above (cf. \cref{prop:lambda-N}) and the denominator from below (cf. \cref{prop:lambda-1-lower-bound}) gives
    \begin{equation*}
        \mathrm{cond}(\mathbf{K}_s) \leq \frac{\dfrac{\pi}{4q_{X_N}^2} \dfrac{4^s}{s-1}}{\dfrac{1}{32\pi}\left(L+\dfrac{1}{2}\right)^{-2s}} = \frac{8\pi^2}{q_{X_N}^2(s-1)}(2L+1)^{2s}.
    \end{equation*}
The proof is complete.
\end{proof}

This theorem reveals that the condition number grows exponentially as the assumed smoothness $s \to \infty$.
\begin{remark}[Exponential Growth of Conditioning] \label{rem:exponential-conditioning}
    While the existing literature (e.g., \cite{narcowich1998stability,levesley1999norm}) has focused primarily on upper bounds for the condition number, the upper bound for the smallest eigenvalue in \cref{prop:lambda-1-upper-bound} and the lower bound for the condition number in \cref{thm:conditioning} appear to be less commonly addressed. When $h_{X_N} < 1/2$ (as is typically the case when $N$ is large), it holds that $(2H+1)/9 > 1$. For any such $X_N$, the conditioning grows exponentially with $s$
    \begin{equation*}
        \lim_{s \to \infty} \mathrm{cond}(\mathbf{K}_s) \geq \lim_{s \to \infty} \frac{q_{X_N}^3(s-1)}{24\pi^2} \left(\frac{2H+1}{9}\right)^s = \infty.
    \end{equation*}
    This suggests that $s$ should be chosen moderately to avoid ill-conditioning for practical computation.
\end{remark}

\subsection{Computation Aspects} \label{sec:computation-realization-kernel}
To implement the proposed weight collocation models, we need to compute the entries of the matrix $\mathbf{K}_s$. The entries of $\mathbf{K}_s$ are defined by infinite series and, in general, cannot be evaluated in closed forms. In this section, we present two computational strategies. First, we exploit closed-form kernel expressions whenever they are available. Otherwise, including in the low-smoothness regime ($0 \leq s \leq 1$), we approximate the kernel via series truncation. We discuss two approaches for choosing the truncation degree $L$: a theoretically rigorous threshold that guarantees positive definiteness (cf. \Cref{sec:high-truncation-level}) and a bandlimited heuristic designed for highly clustered data (cf. \Cref{sec:bandlimited}).

\subsubsection{Closed-Form Expressions} \label{sec:closed_form}
The infinite series \eqref{eq:reproducing-kernel-sobolev-trunc} does not generally admit a closed-form expression. However, when such an expression exists (typically through a specific choice of the Sobolev norm \eqref{eq:sobolev-norm-constant-alt}), we can gain significant computational advantages in evaluating $\mathbf{K}_s$. A notable example is the \textit{Cui and Freeden kernel} ($K_\text{CF}$) in \cite{cui1997equidistribution}, formulated for $\mathbb{H}^{3/2}(\mathbb{S}^2)$. By reweighting
\begin{align*}
  a_\ell^{(3/2)} := \left\{
  \begin{array}{ll}
     4\pi, & \text{if $\ell = 0$} \\
     4\pi/[(2\ell+1)\ell(\ell+1)], & \text{if $\ell > 0$}
  \end{array}\right. \asymp \left(\ell + \frac{1}{2}\right)^{-3},
\end{align*}
$K_\text{CF}$ admits a closed-form logarithmic expression
\begin{equation*}
  K_{\text{CF}}(\mathbf{x}, \mathbf{y}) := 1 + \sum_{\ell = 1}^\infty \frac{1}{\ell(\ell+1)}P_\ell(\langle \mathbf{x}, \mathbf{y}\rangle) =  2 - 2\log \left(1 + \sqrt{\frac{1 - \langle \mathbf{x}, \mathbf{y} \rangle}{2}}\right).
\end{equation*}
Excluding the zero-degree harmonic from $K_\text{CF}$, we obtain
\begin{equation*}
  K_{\text{CF}}'(\mathbf{x}, \mathbf{y}) := \sum_{\ell = 1}^\infty \frac{1}{\ell(\ell+1)}P_\ell(\langle \mathbf{x}, \mathbf{y}\rangle) = 1 - 2\log \left(1 + \sqrt{\frac{1 - \langle \mathbf{x}, \mathbf{y} \rangle}{2}}\right).
\end{equation*}
Consequently, we can formulate the following \textit{discrepancy collocation} model:
\begin{align} \label{eq:cui-freeden-discrepancy-collocation}
  \begin{aligned}
    \min \quad & \mathbf{w}^\top \mathbf{K}_{\text{CF}} \mathbf{w} \\
    \text{s.t.} \quad & \mathbf{Y}_{n^+}\mathbf{w} = \mathbf{b}_{n^+}, \quad \mathbf{w} \in \mathbb{R}^N,
  \end{aligned}
\end{align}
with $\mathbf{K}_\text{CF} := [K_\text{CF}'(\mathbf{x}_i, \mathbf{x}_j)]_{i, j = 1}^N$. Other reproducing kernels also admit closed-form expressions for specific smoothness indices; we summarize several examples in \cref{tab:closed-form-kernels}. 

\begin{table}[htbp]
    \centering
    \caption{Some closed-form expressions of $K_s'(\mathbf{x}, \mathbf{y})$ for Sobolev spaces $\mathbb{H}^s(\mathbb{S}^2)$ across varying smoothness indices $s$. Here, $Q_k(z) := \int_0^1 (1-x)^k (1-2xz+x^2)^{-1/2}\, dx$.
    }
    \label{tab:closed-form-kernels}
    \begin{adjustbox}{max width=\textwidth}
    \begin{tabular}{@{}lccc@{}}
        \toprule
        Name & $s$ & $a_\ell^{(s)}$ ($\ell > 0$) & $K_s'(\mathbf{x}, \mathbf{y})$  \\
        \midrule
        Cui and Freeden \cite{cui1997equidistribution} 
        & $s=3/2$ 
        & $\dfrac{4\pi}{(2\ell+1)\ell(\ell+1)}$ 
        & $1 - 2\log\left(1 + \sqrt{\dfrac{1-\langle\mathbf{x},\mathbf{y}\rangle}{2}}\right)$ \\[15pt]
        Generalized distance \cite[Sec.~5]{brauchart2014qmc} 
        & $1<s<2$ 
        & $-\dfrac{2^{2s}\pi}{s}\dfrac{(1-s)_\ell}{(1+s)_\ell}$ 
        & $\dfrac{2^{2s-2}}{s} - \operatorname{dist}(\mathbf{x},\mathbf{y})^{2s-2}$ \\[15pt]
        Wahba \cite{wahba1981spline} 
        & $s \in \mathbb{N}_+/2$ 
        & $\dfrac{1}{(\ell+1/2)(\ell+1)_{2s-1}}$ 
        & $\dfrac{1}{2\pi}\left[\dfrac{Q_{2s-2}(\langle\mathbf{x},\mathbf{y}\rangle)}{(2s-2)!} - \dfrac{1}{(2s-1)!}\right]$ \\
        \bottomrule
    \end{tabular}
    \end{adjustbox}
\end{table}

\subsubsection{Series Truncation} \label{sec:series-trunc}

When a closed-form expression is unavailable, we approximate $K_s'$ by truncating its Fourier-Legendre expansion at a finite degree $L$
\begin{equation} \label{eq:truncated-kernel}
  K_s'(\mathbf{x}, \mathbf{y}) \approx K_s^L(\mathbf{x}, \mathbf{y}) := \sum_{\ell=1}^L \frac{2\ell+1}{4\pi} a_\ell^{(s)} P_\ell(\langle \mathbf{x}, \mathbf{y}\rangle).
\end{equation}
As in the full kernel, the $\ell = 0$ harmonic is omitted since constant functions are assumed to be integrated exactly. Replacing $K_s'$ by $K_s^L$ in the discrepancy collocation yields the \textit{truncated discrepancy collocation}
\begin{align} \label{eq:truncated-kernel-collocation}
  \begin{aligned}
      \min \quad & \mathbf{w}^\top \mathbf{K}_s^L \mathbf{w} \\
      \text{s.t.} \quad & \mathbf{Y}_{n^+} \mathbf{w} = \mathbf{b}_{n^+}, \quad \mathbf{w} \in \mathbb{R}^N,
  \end{aligned}
\end{align}
where $\mathbf{K}_s^L := [K_s^L(\mathbf{x}_i, \mathbf{x}_j)]_{i,j=1}^N$ is the \textit{truncated discrepancy matrix}.

The truncated kernel $K_s^L$ is the reproducing kernel of the \textit{bandlimited Sobolev space} with bandwidth $L$
\begin{equation*}
    \mathbb{H}_L^s(\mathbb{S}^2) := \{f \in \mathbb{H}^s(\mathbb{S}^2)\,|\,\hat{f}_{\ell, k} = 0 \;\text{for}\;\ell > L\}.
\end{equation*}
Consequently, minimizing \eqref{eq:truncated-kernel-collocation} is equivalent to minimizing the worst-case integration error over the unit ball of $\mathbb{H}^s_L(\mathbb{S}^2)$. Applying the addition theorem \eqref{eq:addition-theorem} and using $I(Y_{\ell, k}) = 0$ for all $\ell \geq 1$, the quadratic form admits the decomposition
\begin{align} \label{eq:truncated-kernel-collocation-decomposition}
\begin{aligned}
    \mathbf{w}^\top \mathbf{K}_s^L \mathbf{w} & = \sum_{\ell=1}^L a_\ell^{(s)} \sum_{k=-\ell}^\ell \left| Q[X_N, \mathbf{w}](Y_{\ell, k}) - I(Y_{\ell, k}) \right|^2,
\end{aligned}
\end{align}
revealing the objective as a weighted sum of squared quadrature residuals for spherical harmonics. Although $\mathbb{H}_L^s(\mathbb{S}^2)$ coincides algebraically with the polynomial space $\mathbb{P}_L(\mathbb{S}^2)$, the two spaces are equipped with different inner products. The Sobolev inner product weights the residuals of harmonics in \eqref{eq:truncated-kernel-collocation-decomposition} using $a_\ell^{(s)} = (\ell+1/2)^{-2s}$, thereby penalizing low-frequency residuals more heavily than high-frequency ones. This frequency-dependent weighting reflects the regularity encoded by the Sobolev space $\mathbb{H}^s(\mathbb{S}^2)$, rather than treating all harmonic degrees equally. Finally, because $K_s^L$ is a finite polynomial sum, it is well defined for every $s \geq 0$. The truncated discrepancy collocation \eqref{eq:truncated-kernel-collocation} therefore extends naturally to the low-smoothness regime $0 \leq s \leq 1$, where the full Sobolev space no longer admits a reproducing kernel.

\begin{proposition} \label{prop:positive-definiteness-truncated-discrepancy-matrix}
    Suppose $s \geq 0$. For any $L \geq 1$, the truncated discrepancy matrix $\mathbf{K}_s^L$ is symmetric positive semidefinite.
\end{proposition}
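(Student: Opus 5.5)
The plan is to write the quadratic form of $\mathbf{K}_s^L$ as a sum of squares, which is essentially the identity \eqref{eq:truncated-kernel-collocation-decomposition} but for an arbitrary vector $\mathbf{c}\in\mathbb{R}^N$ in place of $\mathbf{w}$.

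Symmetry comes first and is immediate. $K_s^L(\mathbf{x},\mathbf{y})$ depends only on $\langle \mathbf{x},\mathbf{y}\rangle$, which is symmetric in $\mathbf{x}$ and $\mathbf{y}$. Hence $[\mathbf{K}_s^L]_{ij}=[\mathbf{K}_s^L]_{ji}$.

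For semidefiniteness, fix any $\mathbf{c}=(c_1,\ldots,c_N)^\top\in\mathbb{R}^N$. Using the addition theorem \eqref{eq:addition-theorem} on each Legendre term of the finite sum \eqref{eq:truncated-kernel}, we get
\begin{equation*}
K_s^L(\mathbf{x}_i,\mathbf{x}_j)=\sum_{\ell=1}^L a_\ell^{(s)}\sum_{k=-\ell}^{\ell} Y_{\ell,k}(\mathbf{x}_i)Y_{\ell,k}(\mathbf{x}_j).
\end{equation*}
Since every sum is finite, the order of summation can be exchanged freely. This gives
\begin{equation*}
\mathbf{c}^\top\mathbf{K}_s^L\mathbf{c}=\sum_{\ell=1}^L a_\ell^{(s)}\sum_{k=-\ell}^{\ell}\Bigl(\sum_{j=1}^N c_jY_{\ell,k}(\mathbf{x}_j)\Bigr)^2 .
\end{equation*}
Each $a_\ell^{(s)}=(\ell+1/2)^{-2s}$ is positive for every $s\ge 0$, including $s=0$ where it equals $1$. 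So the right-hand side is a nonnegative combination of squares, and therefore $\mathbf{c}^\top\mathbf{K}_s^L\mathbf{c}\ge 0$.

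The same argument works for any positive sequence $a_\ell^{(s)}$, so it also covers the equivalent norms in \eqref{eq:sobolev-norm-constant-alt}. The matrix can also be written compactly as $\mathbf{K}_s^L=\widetilde{\mathbf{Y}}_L^\top\mathbf{A}\widetilde{\mathbf{Y}}_L$. Here $\widetilde{\mathbf{Y}}_L$ is $\mathbf{Y}_L$ with its degree-zero row removed, and $\mathbf{A}$ is diagonal with positive entries.

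I see no real obstacle. The one point worth a sentence is why the result is only semidefinite rather than definite. The rank of $\mathbf{K}_s^L$ is at most $(L+1)^2-1$, so when $N>(L+1)^2-1$ there is a nonzero $\mathbf{c}$ with $\mathbf{K}_s^L\mathbf{c}=\mathbf{0}$. One can also take $\mathbf{c}$ annihilating all harmonics of degrees $1,\ldots,L$, as in the dimension argument of \cref{prop:lambda-1-upper-bound}. Either way, positive definiteness in the sense of \cref{prop:positive-definiteness-discrepancy-matrix} cannot hold in general.
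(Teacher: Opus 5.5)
Your proof is correct, and it takes a slightly different route from the paper's. The paper extends $K_s^L$ to a full Legendre series whose coefficients $\tilde a_\ell^{(s)}$ vanish for $\ell=0$ and $\ell>L$. It then cites Schoenberg's characterization: a zonal kernel with nonnegative Legendre coefficients is positive semidefinite. This mirrors its proof of \cref{prop:positive-definiteness-discrepancy-matrix}, where the deeper Ron--Sun criterion is needed to get strict definiteness.

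You instead prove the needed easy direction of Schoenberg's theorem directly. The addition theorem \eqref{eq:addition-theorem} turns $\mathbf{c}^\top\mathbf{K}_s^L\mathbf{c}$ into a nonnegative combination of squares, equivalently the factorization $\mathbf{K}_s^L=\widetilde{\mathbf{Y}}_L^\top\mathbf{A}\widetilde{\mathbf{Y}}_L$.

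What each route buys:
\begin{itemize}
\item Your version is self-contained and uses only a tool the paper already states.
\item It also exposes the rank bound $(L+1)^2-1$, which shows concretely why only semidefiniteness can be claimed. The paper merely remarks afterwards that \eqref{eq:truncated-kernel-collocation} may have multiple solutions.
\item The paper's citation is shorter and keeps the argument uniform with the infinite-series case.
\end{itemize}
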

\begin{proof}
    The symmetry is trivial in view of the definition \eqref{eq:truncated-kernel}. Note that
    \begin{equation*}
        K_s^L(\mathbf{x}, \mathbf{y}) = \sum_{\ell=0}^{\infty} \frac{2\ell+1}{4\pi} \tilde{a}_\ell^{(s)}P_\ell(\langle \mathbf{x}, \mathbf{y} \rangle),
    \end{equation*}
    where
    \begin{equation*}
        \tilde{a}_\ell^{(s)} := \left\{
            \begin{array}{ll}
               0, & \text{if $\ell = 0$ or $\ell > L$}, \\
               (\ell+1/2)^{-2s}, & \text{if $1 \leq \ell \leq L$}. 
            \end{array}
        \right.
    \end{equation*}
    Since $\tilde{a}_\ell^{(s)} \ge 0$ for all $\ell$, the characterization in \cite{schoenberg1942positive} implies that $K_s^L$ is a positive semidefinite kernel on $\mathbb{S}^2$. Hence, $\mathbf{K}_s^L$ is positive semidefinite.
\end{proof}

\cref{prop:positive-definiteness-truncated-discrepancy-matrix} shows that $\mathbf{K}_s^L$ is only positive \textit{semidefinite} and thus \eqref{eq:truncated-kernel-collocation} may have multiple solutions. This is indeed the primary theoretical challenge introduced by truncation. Then, we need to carefully select the bandwidth $L$. We present two distinct regimes: a high truncation level for theoretical definiteness, and a low truncation level for robust geometric recovery.

\subsubsection{High Truncation Level and Positive Definiteness} \label{sec:high-truncation-level}
To ensure the positive definiteness of $\mathbf{K}_s^L$, we use the spectral analysis from \Cref{sec:spectral-stability}. By setting the truncation degree to the threshold in \cref{prop:lambda-1-lower-bound}, we obtain strict convexity of the optimization model.

\begin{theorem}\label{thm:conditioning-truncation}
    Suppose $s \geq 0$. If the truncation level is chosen as $L := \lceil 55/q_{X_N} \rceil$, then $\mathbf{K}_s^L$ is symmetric positive definite. Furthermore, the smallest eigenvalue satisfies $\lambda_1 \geq a_L^{(s)}/(32\pi)$, and the condition number is bounded above by
    \begin{equation*}
        \mathrm{cond}(\mathbf{K}_s^L) \leq \frac{32\pi^2}{q_{X_N}^2} \frac{\sum_{\ell=1}^L (2\ell+1)a_\ell^{(s)}}{a_L^{(s)}}.
    \end{equation*}
\end{theorem}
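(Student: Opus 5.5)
The argument reuses the surrogate B-spline kernel step from the proof of \cref{prop:lambda-1-lower-bound}, which never needed $s>1$ or the tail $\ell>L$. Symmetry is immediate from \eqref{eq:truncated-kernel}. For the lower bound, fix $L=\lceil 55/q_{X_N}\rceil$ and recall three facts established there for the shifted kernel $B_{3,L}'(t)=\sum_{\ell=1}^L\frac{2\ell+1}{4\pi}\alpha_\ell P_\ell(t)$. First, $\sum_{i,j}c_ic_jB_{3,L}'(\langle\mathbf{x}_i,\mathbf{x}_j\rangle)\ge\frac18\|\mathbf{c}\|_2^2$, obtained from \cref{thm:B-sp-ker} together with the bound $N\alpha_0/(4\pi)\le 3/8$. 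Second, $0<\alpha_\ell\le 4\pi$ for $1\le\ell\le L$. Third, both kernels have degree exactly $L$. None of these facts depend on $s$.

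The comparison then goes through degree by degree. For $s\ge0$ the sequence $a_\ell^{(s)}=(\ell+1/2)^{-2s}$ is nonincreasing in $\ell$, with equality allowed when $s=0$. Hence $a_\ell^{(s)}\ge a_L^{(s)}\ge a_L^{(s)}\alpha_\ell/(4\pi)$ for $1\le\ell\le L$. This implies that $K_s^L-\frac{a_L^{(s)}}{4\pi}B_{3,L}'$ is a finite Legendre sum with nonnegative coefficients on degrees $1,\dots,L$. By Schoenberg's characterization, already used in \cref{prop:positive-definiteness-truncated-discrepancy-matrix}, this difference is a positive semidefinite kernel. Therefore
\begin{equation*}
\mathbf{c}^\top\mathbf{K}_s^L\mathbf{c}\ \ge\ \frac{a_L^{(s)}}{4\pi}\sum_{i,j}c_ic_jB_{3,L}'(\langle\mathbf{x}_i,\mathbf{x}_j\rangle)\ \ge\ \frac{a_L^{(s)}}{32\pi}\|\mathbf{c}\|_2^2,
\end{equation*}
exactly as in \eqref{eq:surrogate-step}. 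This gives $\lambda_1\ge a_L^{(s)}/(32\pi)>0$, so $\mathbf{K}_s^L$ is positive definite. The only point to check carefully is that the comparison uses only the coefficients on degrees $1,\dots,L$, which is true here since truncation removes nothing that the comparison needs.

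For the condition number, bound $\lambda_N$ by the trace as in \cref{prop:lambda-N}. Using $P_\ell(1)=1$,
\begin{equation*}
\lambda_N\le\mathrm{Tr}(\mathbf{K}_s^L)=\frac{N}{4\pi}\sum_{\ell=1}^L(2\ell+1)a_\ell^{(s)}.
\end{equation*}
Inserting the packing bound $N\le 4\pi^2/q_{X_N}^2$ from \eqref{eq:N-upper-bound} gives $\lambda_N\le\frac{\pi}{q_{X_N}^2}\sum_{\ell=1}^L(2\ell+1)a_\ell^{(s)}$. Dividing by the lower bound for $\lambda_1$ yields the constant $32\pi\cdot\pi=32\pi^2$, which is the stated estimate.

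The main obstacle is confirming that the B-spline estimates carry over unchanged for $s\in[0,1]$. They do, because they concern only $B_{3,L}$ and the point geometry, and the finite sums are well defined for every $s$.
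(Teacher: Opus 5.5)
Your proposal is correct and follows the paper's proof. You compare with the surrogate kernel $\frac{a_L^{(s)}}{4\pi}B_{3,L}'$ as in \eqref{eq:surrogate-step} to get $\lambda_1 \geq a_L^{(s)}/(32\pi)$, then bound $\lambda_N$ by the trace together with $N \leq 4\pi^2/q_{X_N}^2$; your coefficient-wise comparison of $K_s^L$ directly with $B_{3,L}'$ is also slightly cleaner than the paper's sandwich, which invokes $K_s'$ even though that kernel is only well defined for $s>1$.
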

\begin{proof}
    To establish the positive definiteness, it follows from  \eqref{eq:surrogate-step} that the quadratic form of $K_s^L$ is sandwiched by those of $K_s'$ and $B_{3,L}'$ in sense of
    \begin{equation*}
        \sum_{i=1}^N \sum_{j=1}^N c_ic_j K_s'(\mathbf{x}_i, \mathbf{x}_j) \geq \sum_{i=1}^N\sum_{j=1}^N c_ic_j K_s^L(\mathbf{x}_i, \mathbf{x}_j) \geq \frac{a_{L}^{(s)}}{4\pi} \sum_{i=1}^N \sum_{j=1}^N c_ic_j B_{3,L}'(\langle \mathbf{x}_i, \mathbf{x}_j \rangle).
    \end{equation*}
    Therefore, $\mathbf{K}_s^L$ shares the same lower bound on its smallest eigenvalue with $\mathbf{K}_s$ in \cref{prop:lambda-1-lower-bound}: $\lambda_1 \geq a_L^{(s)}/(32\pi)$.

    To bound the condition number from above, we estimate the largest eigenvalue via the trace
    \begin{equation*}
        \lambda_N \leq \mathrm{Tr}(\mathbf{K}_s^L) = \sum_{k=1}^N K_s^L(\mathbf{x}_k, \mathbf{x}_k) = N \sum_{\ell=1}^L \frac{2\ell+1}{4\pi} a_\ell^{(s)} P_\ell(1) = N \sum_{\ell=1}^L \frac{2\ell+1}{4\pi} a_\ell^{(s)}.
    \end{equation*}
    Using $N \leq 4\pi^2/q_{X_N}^2$ (cf. \eqref{eq:N-upper-bound}), we can bound
    \begin{equation*}
        \mathrm{cond}(\mathbf{K}_s^L) \leq \dfrac{N \displaystyle\sum_{\ell=1}^L \dfrac{2\ell+1}{4\pi} a_\ell^{(s)}}{\dfrac{1}{32\pi}a_L^{(s)}} \leq \frac{32\pi^2}{q_{X_N}^2} \frac{\sum_{\ell=1}^L (2\ell+1)a_\ell^{(s)}}{a_L^{(s)}},
    \end{equation*}
and the proof is complete.
\end{proof}

With this theoretically justified choice of $L$, the truncated discrepancy matrix $\mathbf{K}_s^L$ becomes strictly positive definite. Consequently, the optimization problem \eqref{eq:truncated-kernel-collocation} admits a unique optimal solution.

\begin{remark}[Evaluation of $\mathbf{K}_s^L$]

    To evaluate the expansion in \eqref{eq:truncated-kernel}, we employ Clenshaw summation (see \cite{clenshaw1955note} and \cite[Section~5.4]{press2007numerical}), which is numerically stable and efficient for Legendre series. This avoids the underflow, overflow, and accumulated round-off errors that can arise from evaluating the individual Legendre polynomials separately. Moreover, the reduction of the truncation degree to $L = O(q_{X_N}^{-1})$ discussed in \cref{rem:reduction} is essential for practical computation. The assembling of $\mathbf{K}_s^L$ using Clenshaw summation takes $O(N^2L)$ operations. Reducing $L$ therefore directly lowers the computational complexity and makes the truncated discrepancy collocation \eqref{eq:truncated-kernel-collocation} practical for larger point sets.
\end{remark}

\subsubsection{Low Truncation Level and Bandlimited Collocation} \label{sec:bandlimited}
When the scattered nodes are highly clustered, the separation distance $q_{X_N}$ can be very small, making the theoretically justified truncation level $L = \lceil 55/q_{X_N} \rceil$ prohibitively large. To address this difficulty, we can choose a much smaller bandwidth
\begin{equation*}
    n^+ < L := \lceil \gamma N \rceil \ll \lceil 55/q_{X_N} \rceil
\end{equation*}
where $\gamma > 0$ is a relaxation parameter. Rather than approximating the original kernel at this bandwidth, we view the problem as recovering low-frequency spherical harmonics. Motivated by the residual representation \eqref{eq:truncated-kernel-collocation-decomposition}, we replace the quadratic penalty on the harmonic residuals with its $\ell_1$-norm and incorporate a strictly convex regularizer $R(\mathbf{w})$ with strength $\lambda > 0$ to stabilize the solution. This yields the \textit{bandlimited collocation}
\begin{align} \label{eq:bandlimited-collocation}
  \begin{aligned}
    \min \quad & \|\bm{\Gamma}_s (\mathbf{Y}_L \mathbf{w} - \mathbf{b}_L) \|_1 + \lambda  R(\mathbf{w}) \\
    \text{s.t.} \quad & \mathbf{Y}_{n^+} \mathbf{w} = \mathbf{b}_{n^+}, \quad \mathbf{w} \in \mathbb{R}^N,
  \end{aligned}
\end{align}
where 
\begin{equation*}
    \bm{\Gamma}_s = \mathrm{diag}\left( \sqrt{a_\ell^{(s)}}\,\mathbf{I}_{2\ell+1}\right)_{\ell=0}^L
\end{equation*}
is a diagonal matrix and $\mathbf{I}_k \in \mathbb{R}^{k \times k}$ denotes the identity matrix. The choice of $R(\mathbf{w})$ could be flexible. For instances, one can utilize the simple $\ell_2$ norm or the geometry-aware regularizer (see \Cref{sec:perturbation}).

Replacing the quadratic penalty with an $\ell_1$-norm improves robustness to unresolved spherical harmonics. Rather than enforcing near-exact integration of all harmonics up to degree $L$, the $\ell_1$ objective permits the residual to concentrate on poorly resolved harmonics in the range $n^+ < \ell \leq L$. This prevents the optimizer from introducing large oscillations in the quadrature weights while maintaining accurate integration of the well-resolved low-frequency components, as illustrated in \cref{ex:bandlimited-duality}.

\begin{figure}[htbp]
\centering
\begin{subfigure}[t]{0.47\textwidth}
    \centering
    \includegraphics[width=\textwidth]{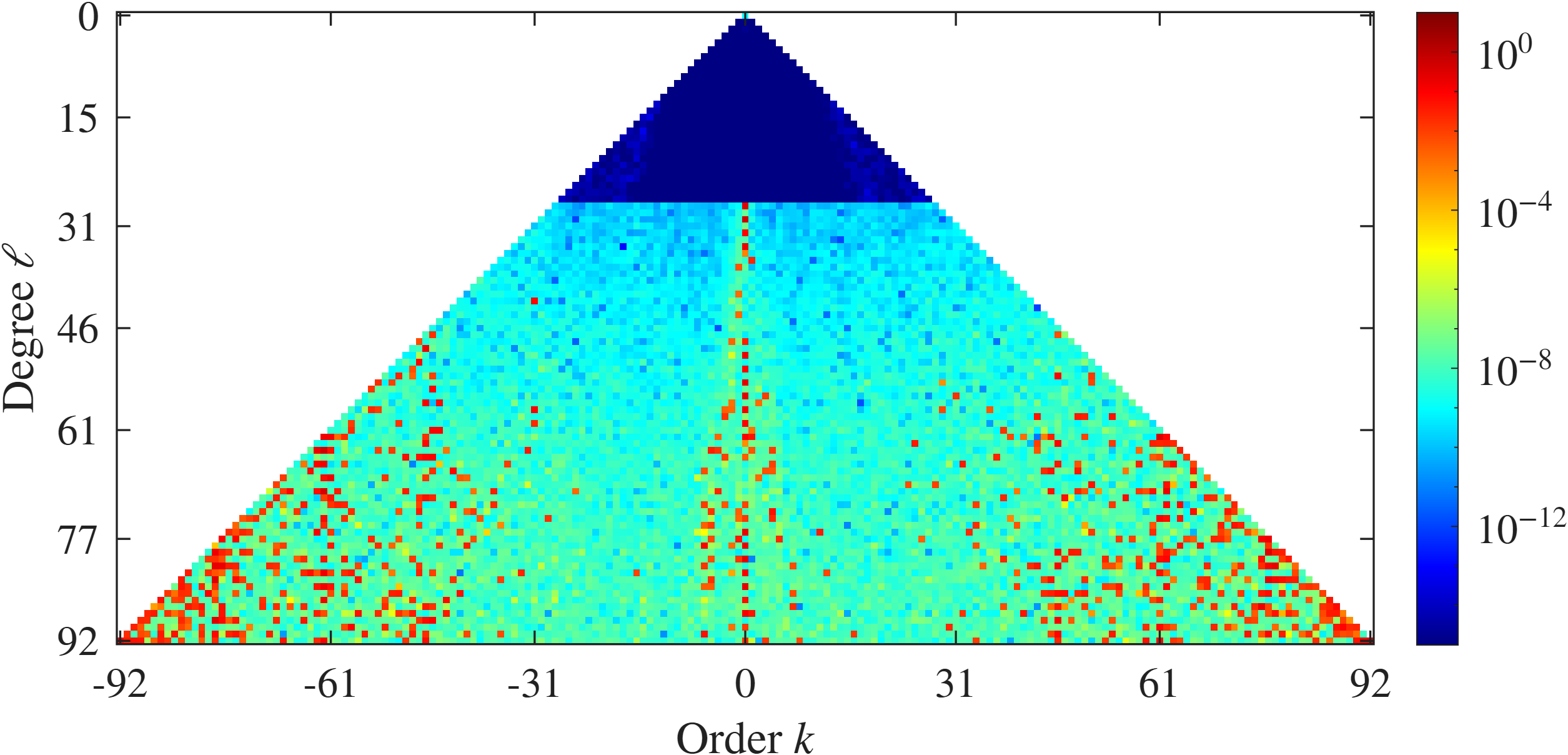}
    \caption{On 10,443 MAGSAT points.}
    \label{fig:magsat-l1}
\end{subfigure}%
~\hfill~
\begin{subfigure}[t]{0.47\textwidth}
    \centering
    \includegraphics[width=\textwidth]{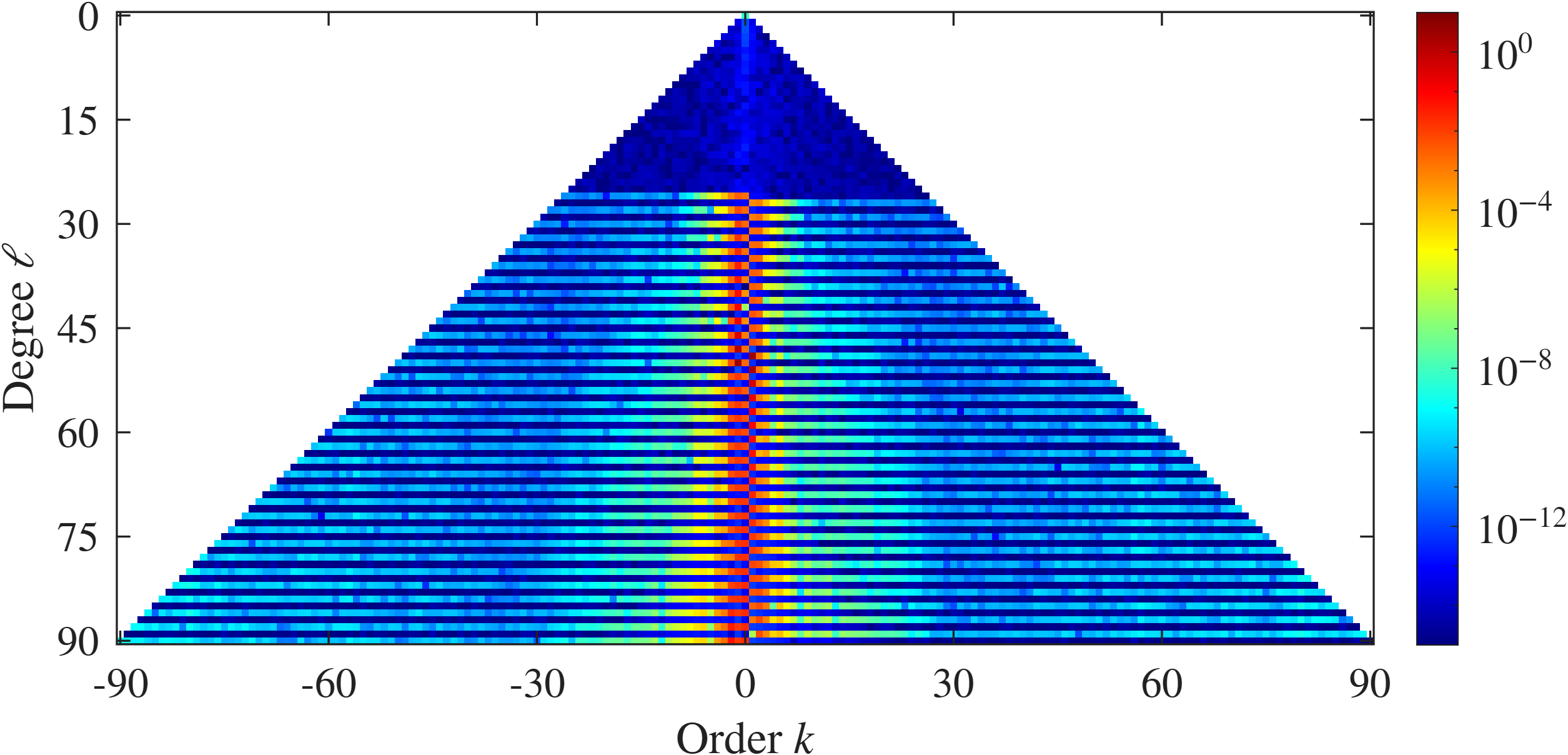}
    \caption{On 10,000 spiral points.} 
    \label{fig:spiral-l1}
\end{subfigure}
\caption{Absolute integration residuals of spherical harmonics $r_{\ell, k}$ for weights obtained via bandlimited collocation \eqref{eq:bandlimited-collocation}, plotted by degree $\ell$ (vertical axis) and order $k \in [-\ell, \ell]$ (horizontal axis).}
\label{fig:feature-selection}
\end{figure}

\begin{example} \label{ex:bandlimited-duality}
We consider the bandlimited collocation \eqref{eq:bandlimited-collocation} with smoothness parameter $s = 2.5$, quadratic regularizer $R(\mathbf{w}) = \frac{1}{2}\|\mathbf{w}\|_2^2$, regularization strength $\lambda = 0.01$, and relaxation parameter $\gamma = 0.9$. The optimization is solved separately for two node sets: the $10,443$-points MAGSAT data set and a synthetic set of $10,000$ spiral points\footnote{The spiral points are generated by evaluating the parametric curve $\mathbf{x}(t) = (\sin t \sin 50t, \sin t \cos 50t, \cos t)^\top$ at $10,000$ uniformly spaced intervals $t_j = (j-1)\pi/(N-1)$ for $j = 1, \ldots, 10,000$.}. We compare the resulting quadrature rules by examining the absolute integration residuals of the spherical harmonics, $$r_{\ell, k} := |Q[X_N,\mathbf{w}](Y_{\ell, k}) - I(Y_{\ell, k})|.$$

\cref{fig:feature-selection} shows the harmonic residuals for the two resulting quadrature rules. These two node sets possess complementary geometric limitations. Because the MAGSAT data arise from polar orbits, they contain large longitudinal gaps near the equator and may provide average-quality resolution of \textit{sectorial harmonics} ($|k| \approx \ell$). Accordingly, \cref{fig:magsat-l1} shows the concentration of residuals on these  resolved harmonics. In contrast, the spiral points provide uniform azimuthal coverage but exhibits substantial latitudinal gaps between successive coils, limiting its ability to resolve \textit{zonal harmonics} ($|k| \approx 0$). \cref{fig:spiral-l1} likewise shows that the residuals are concentrated on these unresolved harmonics. In both cases, the residual patterns faithfully reflect the geometric limitation of the underlying node set rather than attempting to enforce high accuracy on harmonics that are beyond the geometric explanations. 
\end{example}

\subsection{Revisiting \texorpdfstring{$\ell_2$--}{l2-}Minimization} \label{sec:asymptotic-recovery}
While \cref{rem:exponential-conditioning} establishes the exponential growth of the condition number as $s \to \infty$, the opposite limit $s \to 0^+$ provides a reinterpretation of the classical $\ell_2$-minimization. 

Recall the Fourier--Legendre expansion of the Dirac delta distribution on $\mathbb{S}^2$ \cite[(3.56)] {jackson1998classical} with the $\ell = 0$ term isolated
\begin{equation*}
 \delta(\mathbf{x}, \mathbf{y}) = \frac{1}{4\pi} + \sum_{\ell=1}^\infty \frac{2\ell+1}{4\pi} P_\ell(\langle \mathbf{x}, \mathbf{y}\rangle), \quad \mathbf{x}, \mathbf{y} \in \mathbb{S}^2.
\end{equation*}
As $s \to 0^+$, the Sobolev weights satisfy $a_\ell^{(s)} \to 1$. Consequently, as $L \to \infty$, the truncated kernel $K_s^L(\mathbf{x}, \mathbf{y})$ converges in sense of distributions to $\delta(\mathbf{x}, \mathbf{y}) - 1/(4\pi)$. It follows that, for $i \neq j$, we have $K_s^L(\mathbf{x}_i, \mathbf{x}_j) \to -1/(4\pi)$. Since $P_\ell(1) = 1$, every diagonal entry of the truncated discrepancy matrix is given by
\begin{equation*}
    K_s^L(\mathbf{x}_i, \mathbf{x}_i) = c_{L, s}, \quad c_{L, s} := \sum_{\ell=1}^L \frac{2\ell+1}{4\pi} a_\ell^{(s)},
\end{equation*}
which diverges as $s \to 0^+$ and $L \to \infty$. Therefore, the objective in \eqref{eq:truncated-kernel-collocation} is asymptotically dominated by
\begin{equation*}
    c_{L, s} \|\mathbf{w}\|_2^2 - \frac{1}{4\pi} (\mathbf{1}^\top \mathbf{w})^2.
\end{equation*}
Since the quadrature integrates constants exactly (cf. \eqref{eq:exact-for-const}), the second term is constant and therefore does not affect the optimization. Hence, in the limit $s \to 0^+$ and $L \to \infty$, the truncated discrepancy collocation \eqref{eq:truncated-kernel-collocation} is asymptotically equivalent to solving
\begin{align} \label{eq:l2-minimization}
    \begin{aligned}
    \min \quad & \|\mathbf{w}\|_2^2 \\
    \text{s.t.} \quad & \mathbf{Y}_{n^+} \mathbf{w} = \mathbf{b}_{n^+}, \quad \mathbf{w} \in \mathbb{R}^N,
    \end{aligned}
\end{align}
which is precisely the classical $\ell_2$-minimization. 

While the formulation \eqref{eq:l2-minimization} is often viewed as a heuristic for promoting weight uniformity, the above interpretation shows that it also arises as the limiting case of discrepancy minimization when the bandwidth tends to infinity and the underlying Sobolev space degenerates to $L^2(\mathbb{S}^2)$.

\section{Marcinkiewicz\texorpdfstring{--}{-}Zygmund Collocation} \label{sec:MZ}
While the kernel collocation in \Cref{sec:kernel} focuses on minimizing the discrepancy of a quadrature for numerical integration, our second collocation methodology targets the performance of function approximation, specifically the hyperinterpolation.

A key tool for analyzing hyperinterpolation is the Marcinkiewicz--Zygmund (MZ) inequality \cite{mhaskar2001spherical}. The constants in these inequalities are closely related to the stability and accuracy of hyperinterpolation. By shifting the emphasis from algebraic exactness to the optimization of these MZ constants, we construct quadrature weights that promote stable and efficient hyperinterpolation on arbitrarily scattered data.

\subsection{The Role of Marcinkiewicz\texorpdfstring{--}{-}Zygmund Constants} \label{sec:MZ-role}
\begin{definition}
    Let $X_N = \{\mathbf{x}_1, \ldots, \mathbf{x}_N\} \subseteq \mathbb{S}^2$ be a set of sites and $\mathbf{w} \in \mathbb{R}^N$ the associated weights. Denote its element-wise absolute value $|\mathbf{w}| := (|w_1|, |w_2|, \ldots, |w_N|) \in \mathbb{R}^N$. The quadrature $Q[X_N, \mathbf{w}]$ is said to satisfy
    \begin{enumerate}[(a)]
        \item an \textit{$L^2$ MZ inequality} \cite{mhaskar2001spherical} of degree $n \in \mathbb{N}_0$, if there exist constants $A \leq 1 \leq B$ such that
            \begin{equation} \label{eq:L2-MZ-inequality}
            A \, I(f^2) \leq Q[X_N, \mathbf{w}](f^2) \leq B \, I(f^2) \quad \forall\, f \in \mathbb{P}_n(\mathbb{S}^2);
        \end{equation}
        \item an \textit{$L^2$ MZ condition} \cite{gia2009localized} of degree $n \in \mathbb{N}_0$, if there exist constants $c \geq 0$ such that
        \begin{equation}
        \label{eq:L2-MZ-condition}
            Q[X_N, |\mathbf{w}|](f^2) \leq c\, I(f^2) \quad \forall\, f \in \mathbb{P}_n(\mathbb{S}^2).
        \end{equation}
    \end{enumerate}
\end{definition}

We begin by relating these conditions to P\'olya's condition. 
\begin{remark}[Connection to P\'olya's Condition] 
Taking $f \equiv 1 \in \mathbb{P}_0(\mathbb{S}^2)$ in an $L^2$ MZ condition \eqref{eq:L2-MZ-condition} of any degree yields
\begin{equation} \label{eq:L2-MZ-implies-Polya}
    \|\mathbf{w}\|_1 \leq 4\pi c.
\end{equation}
Therefore, the $L^2$ MZ condition \eqref{eq:L2-MZ-condition} implies P\'olya's condition \eqref{eq:polya-cond}. However, the same cannot be said for $L^2$ MZ inequalities, which are more prevalent in the literature, particularly for studying the strictly positive quadratures \cite{mhaskar2001spherical, filbir2024marcinkiewicz, an2026roleweakmarcinkiewiczzygmundconstants}. Taking $f \equiv 1$ in \eqref{eq:L2-MZ-inequality} gives
\begin{equation*}
    4\pi A \leq \sum_{j=1}^N w_j \leq 4\pi B,
\end{equation*}
offering no control over the absolute sum $\|\mathbf{w}\|_1$. This necessitates the distinction between the $L^2$ MZ inequalities (signed) and $L^2$ MZ conditions (absolute) when dealing with signed weights.
\end{remark}

The accuracy of hyperinterpolation is well documented in the literature, see, e.g.,  \cite{sloan1995polynomial,hesse2006hyperinterpolation ,an2024bypassing}. However, these classical bounds typically rely on strictly positive quadrature weights and algebraic exactness. We establish the corresponding bounds that bypass both of these requirements. By accommodating signed weights and the absence of quadrature exactness, this generalized bound underpins our MZ collocation framework.

\begin{theorem}
\label{thm:what-MZ-can-do}
    Let $Q[X_N, \mathbf{w}]$ be a quadrature. For any $f \in C(\mathbb{S}^2)$, let $$p^* := \operatorname*{argmin}_{p \in \mathbb{P}_n(\mathbb{S}^2)} \|f - p \|_\infty$$ be the best uniform approximation polynomial of $f$ in $\mathbb{P}_n(\mathbb{S}^2)$. If $Q[X_N, \mathbf{w}]$ satisfies an $L^2$ MZ condition of degree $n \in \mathbb{N}_0$ with constant $c \geq 0$, then the hyperinterpolation operator $L_n$ is stable in the sense that
    \begin{equation*}
        \|L_n\|_{C\to L^2} \leq c\sqrt{4\pi},
    \end{equation*}
    and the hyperinterpolation error satisfies
    \begin{equation} \label{eq:L2-hyperinterpolation-error}
        \|L_n f - f \|_{L^2} \leq (c+1) \sqrt{4\pi} \|f - p^*\|_\infty + \|L_n p^* - p^*\|_{L^2}.
    \end{equation}
\end{theorem}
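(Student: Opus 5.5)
The plan is a duality argument in $\mathbb{P}_n(\mathbb{S}^2)$ via \cref{lem:hyperinterpolation-lemma}, followed by a triangle inequality that splits off the best approximation $p^*$.

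\textbf{Stability.} Fix $f \in C(\mathbb{S}^2)$ and set $p := L_n f \in \mathbb{P}_n(\mathbb{S}^2)$. Taking this $p$ in \cref{lem:hyperinterpolation-lemma} gives
\begin{equation*}
    \|L_n f\|_{L^2}^2 = \langle L_n f, L_n f\rangle_{L^2} = \langle f, L_n f\rangle_Q = \sum_{j=1}^N w_j f(\mathbf{x}_j)\, p(\mathbf{x}_j).
\end{equation*}
Since the weights may be negative, I bound this sum in absolute value by $\sum_j |w_j|\,|f(\mathbf{x}_j)|\,|p(\mathbf{x}_j)|$. Applying Cauchy--Schwarz with respect to the nonnegative weights $|w_j|$ gives
\begin{equation*}
    \|L_n f\|_{L^2}^2 \le \bigl(Q[X_N,|\mathbf{w}|](f^2)\bigr)^{1/2}\bigl(Q[X_N,|\mathbf{w}|](p^2)\bigr)^{1/2}.
\end{equation*}
For the first factor, I use $Q[X_N,|\mathbf{w}|](f^2) \le \|\mathbf{w}\|_1 \|f\|_\infty^2 \le 4\pi c\,\|f\|_\infty^2$ by \eqref{eq:L2-MZ-implies-Polya}. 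For the second factor, I use the $L^2$ MZ condition \eqref{eq:L2-MZ-condition} applied to $p$, which gives $Q[X_N,|\mathbf{w}|](p^2) \le c\,\|p\|_{L^2}^2$. Combining and dividing by $\|L_n f\|_{L^2}$ (the case $L_n f = 0$ is trivial) yields
\begin{equation*}
    \|L_n f\|_{L^2} \le \sqrt{4\pi c}\,\sqrt{c}\,\|f\|_\infty = c\sqrt{4\pi}\,\|f\|_\infty,
\end{equation*}
which is the claimed operator bound.

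\textbf{Error bound.} By linearity of $L_n$,
\begin{equation*}
    L_n f - f = L_n(f-p^*) + (L_n p^* - p^*) + (p^* - f).
\end{equation*}
I apply the triangle inequality in $L^2$ to these three terms. The stability bound gives $\|L_n(f-p^*)\|_{L^2} \le c\sqrt{4\pi}\,\|f-p^*\|_\infty$. The elementary estimate $\|g\|_{L^2} \le \sqrt{4\pi}\,\|g\|_\infty$ gives $\|p^*-f\|_{L^2} \le \sqrt{4\pi}\,\|f-p^*\|_\infty$. Adding these yields \eqref{eq:L2-hyperinterpolation-error}, with the middle term $\|L_n p^* - p^*\|_{L^2}$ kept as is. That term does not vanish because exactness is not assumed, and it is the accuracy term to be controlled later.

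The existence of $p^*$ is standard, since $\mathbb{P}_n(\mathbb{S}^2)$ is finite-dimensional. The only delicate point is the stability step. The usual positive-weight argument uses $\langle\cdot,\cdot\rangle_Q$ as an inner product, which fails for signed weights. It must be replaced by Cauchy--Schwarz in the $|\mathbf{w}|$-weighted sum, and the $f$-factor must be controlled through $\|\mathbf{w}\|_1$ via \eqref{eq:L2-MZ-implies-Polya} rather than through exactness.
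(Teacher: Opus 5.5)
Your proposal is correct and follows essentially the same route as the paper: you apply \cref{lem:hyperinterpolation-lemma} with $p = L_n f$, then use Cauchy--Schwarz with the weights $|w_j|$, bounding the $f$-factor through $\|\mathbf{w}\|_1 \le 4\pi c$ and the $L_n f$-factor through the $L^2$ MZ condition. The error bound then comes from the same three-term triangle inequality split around $p^*$.
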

\begin{proof}
    By setting $p = L_n f \in \mathbb{P}_n(\mathbb{S}^2)$ in \cref{lem:hyperinterpolation-lemma}, we obtain
    \begin{equation}
        \label{eq:L2-norm-of-hyperinterpolation}
        \|L_nf\|_{L^2}^2 = \sum_{j=1}^N w_jf(\mathbf{x}_j)L_nf(\mathbf{x}_j).
    \end{equation}
    We apply the Cauchy--Schwarz inequality to the sum \eqref{eq:L2-norm-of-hyperinterpolation}. Invoking the $L^2$ MZ condition \eqref{eq:L2-MZ-condition} and \eqref{eq:L2-MZ-implies-Polya}, we deduce
    \begin{align*} 
        \|L_nf\|_{L^2}^2 & \leq \left(\sum_{j=1}^N |w_j| |f(\mathbf{x}_j)|^2\right)^
        {1/2} \left(\sum_{j=1}^N |w_j| |L
        _nf(\mathbf{x}_j)|^2\right)^
        {1/2} \leq \sqrt{\|\mathbf{w}\|_1}\|f\|_\infty \sqrt{c}\|L_nf\|_{L^2} \\
        & \leq \sqrt{4\pi c} \|f\|_\infty \sqrt{c} \|L_nf\|_2 = c\sqrt{4\pi}\|f\|_\infty \|L_nf\|_2.
    \end{align*}
    Consequently, $\|L_n\|_{C \to L^2} \leq c\sqrt{4\pi}$.

    To establish the error bound \eqref{eq:L2-hyperinterpolation-error}, for any $p \in \mathbb{P}_n(\mathbb{S}^2)$, we split the hyperinterpolation error as follows using the triangle inequality:
    \begin{align*}
        \|L_n f - f\|_{L^2} & \leq \|L_n (f-p)\|_{L^2} + \|f-p\|_{L^2} + \|L_n p - p\|_{L^2} \\
        & \leq c\sqrt{4\pi}\|f - p\|_\infty +\sqrt{4\pi}\|f - p\|_\infty + \|L_n p - p\|_{L^2}\\
        & = (c+1)\sqrt{4\pi}\|f - p\|_\infty + \|L_n p - p\|_{L^2}.
    \end{align*}
    Setting $p = p^*$ gives the desired upper bound.
\end{proof}

The term $\|L_n p^* - p^*\|_{L^2}$ in \eqref{eq:L2-hyperinterpolation-error} is an accuracy term pertinent to $L^2$ MZ inequalities, as we now specify.

\begin{proposition} \label{lem:accuracy-term}
    Suppose the quadrature $Q[X_N, \mathbf{w}]$ satisfies an $L^2$ MZ inequality of degree $n$ with constants $A \leq B$. Let $\eta := \max\{|1 - A|, |1 - B|\}$. We have
    \begin{equation*}
        \|L_n p - p\|_{L^2} \leq \eta \|p \|_{L^2} \quad \forall\, p \in \mathbb{P}_n(\mathbb{S}^2).
    \end{equation*}
\end{proposition}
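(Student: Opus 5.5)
The plan is to realize $L_n$ restricted to $\mathbb{P}_n(\mathbb{S}^2)$ as a self-adjoint operator on the finite-dimensional Hilbert space $(\mathbb{P}_n(\mathbb{S}^2), \langle\cdot,\cdot\rangle_{L^2})$ whose quadratic form is exactly the discrete form $Q[X_N,\mathbf{w}](p^2)$. The MZ inequality then pins its spectrum inside $[A,B]$, so $L_n - \mathrm{Id}$ has spectrum in $[A-1,B-1]$ and operator norm at most $\eta$.

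\textbf{Step 1 (operator identity).} By \cref{lem:hyperinterpolation-lemma}, for all $p,q \in \mathbb{P}_n(\mathbb{S}^2)$ we have
\begin{equation*}
\langle L_n p, q\rangle_{L^2} = \langle p, q\rangle_Q = \sum_{j=1}^N w_j p(\mathbf{x}_j) q(\mathbf{x}_j).
\end{equation*}
The right-hand side is symmetric in $p$ and $q$. Since $L_n$ maps $\mathbb{P}_n(\mathbb{S}^2)$ into itself, the restriction $S := L_n|_{\mathbb{P}_n}$ is therefore self-adjoint with respect to the $L^2$ inner product. Its quadratic form is $\langle S p, p\rangle_{L^2} = Q[X_N,\mathbf{w}](p^2)$. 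In the orthonormal basis $\{Y_{\ell,k}\}$, this says $S$ is represented by the symmetric Gram-type matrix $\mathbf{Y}_n \mathrm{diag}(\mathbf{w}) \mathbf{Y}_n^\top$.

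\textbf{Step 2 (spectral localization).} The $L^2$ MZ inequality \eqref{eq:L2-MZ-inequality} gives, for every $p \in \mathbb{P}_n(\mathbb{S}^2)$,
\begin{equation*}
A\|p\|_{L^2}^2 \le \langle Sp,p\rangle_{L^2} \le B\|p\|_{L^2}^2.
\end{equation*}
By the Rayleigh quotient characterization, every eigenvalue of $S$ therefore lies in $[A,B]$. It follows that the self-adjoint operator $S - \mathrm{Id}$ has eigenvalues in $[A-1, B-1]$.

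\textbf{Step 3 (norm bound).} The operator norm of a self-adjoint operator equals its spectral radius. Hence
\begin{equation*}
\|S-\mathrm{Id}\| \le \max\{|A-1|,|B-1|\} = \eta,
\end{equation*}
and so $\|L_np - p\|_{L^2} \le \eta\|p\|_{L^2}$ for every $p \in \mathbb{P}_n(\mathbb{S}^2)$.

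The only delicate point is Step 1. The weights may be signed, so $\langle\cdot,\cdot\rangle_Q$ need not be an inner product, and no positivity argument is available. The proof must rely only on the symmetry of the bilinear form, which holds regardless of sign, and on the fact that the $L^2$ inner product on $\mathbb{P}_n(\mathbb{S}^2)$ supplies the Hilbert structure. With those two ingredients the spectral theorem applies without further difficulty.
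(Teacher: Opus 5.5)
Your proof is correct, and it reaches the result by a different (though closely related) route from the paper. Both arguments start from \cref{lem:hyperinterpolation-lemma}, but the paper never forms the operator $L_n|_{\mathbb{P}_n}$ and uses no spectral theory.

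The paper instead writes $\|L_np-p\|_{L^2}^2 = B(p, L_np-p)$ with the symmetric form $B(u,v) := \langle u,v\rangle_Q - \langle u,v\rangle_{L^2}$. The MZ inequality gives the diagonal bound $|B(p,p)|\le\eta\|p\|_{L^2}^2$. The paper upgrades this to the off-diagonal bound $|B(u,v)|\le\eta\|u\|_{L^2}\|v\|_{L^2}$ by polarization, the parallelogram law and normalization, and then takes $u=p$, $v=L_np-p$.

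That upgrade is a hands-on proof of exactly the fact you invoke in Step~3: for a symmetric form, the supremum of the quadratic form on the unit sphere controls the bilinear form. So the paper's argument is elementary and self-contained and needs no diagonalization. Yours packages the same fact through the finite-dimensional spectral theorem.

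What your route gains is structure. You identify the matrix of $L_n|_{\mathbb{P}_n}$ in the orthonormal basis $\{Y_{\ell,k}\}$ as $\mathbf{G}_n[X_N](\mathbf{w})$. This gives $\|(L_n-\mathrm{Id})|_{\mathbb{P}_n}\| = \|\mathbf{I}-\mathbf{G}_n[X_N](\mathbf{w})\|_2$, so the bound is attained when $A$ and $B$ are the optimal MZ constants. It also ties the proposition directly to \cref{prop:eigenvalue-characterization} and to the objective of the spectral collocation model \eqref{eq:spectral-collocation}.
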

\begin{proof}
Since $L_np - p \in \mathbb{P}_n(\mathbb{S}^2)$, it follows from \cref{lem:hyperinterpolation-lemma} that
\begin{equation*}
    \|L_n p - p\|_{L^2}^2 = \langle L_n p - p, L_n p - p \rangle_{L^2} = \langle p, L_n p - p \rangle_Q - \langle p, L_n p - p \rangle_{L^2}.
\end{equation*}
Define now the symmetric bilinear form $B(u,v):=\langle u,v\rangle_Q - \langle u, v \rangle_{L^2}$ on $\mathbb{P}_n(\mathbb{S}^2) \times \mathbb{P}_n(\mathbb{S}^2)$. It follows from the definition of $L^2$ MZ inequalities that
\begin{equation} \label{eq:diagonal-bound}
    |B(p, p)| \leq \eta \|p\|_{L^2} \quad \forall\, p \in \mathbb{P}_n(\mathbb{S}^2).
\end{equation}
For any $u, v \in \mathbb{P}_n(\mathbb{S}^2)$, the polarization identity implies
\begin{equation*}
    B(u,v) = \frac{1}{4}[B(u+v, u+v) - B(u-v, u-v)].
\end{equation*}
Apply the triangle inequality, \eqref{eq:diagonal-bound}. Then, the parallelogram law yields
\begin{align} \label{eq:cross-term-bound}
    |B(u,v)| & \leq \frac{\eta}{4}[\|u+v\|_{L^2}^2 + \|u-v\|_{L^2}^2] = \frac{\eta}{2}[\|u\|_{L^2}^2 + \|v\|_{L^2}^2].
\end{align}
Without loss of generality, assuming that $u, v \neq 0$ (the case where $u = 0$ or $v = 0$ is trivial), we define the normalized polynomials $\tilde{u}:=u/\|u\|_{L^2}$ and $\tilde{v}:= v/\|v\|_{L^2}$. Since $\|\tilde{u}\|_{L^2} = \|\tilde{v}\|_{L^2} = 1$, \eqref{eq:cross-term-bound} implies $|B(\tilde{u}, \tilde{v})| \leq \eta$. Multiplying both sides by $\|u\|_{L^2}\|v\|_{L^2}$ gives
\begin{equation*}
    |B(u,v)| \leq \eta\|u\|_{L^2}\|v\|_{L^2} \quad \forall\, u, v \in \mathbb{P}_n(\mathbb{S}^2).
\end{equation*}
Setting $u = p \in \mathbb{P}_n(\mathbb{S}^2)$ and $v = L_np - p \in \mathbb{P}_n(\mathbb{S}^2)$, we obtain
\begin{equation*}
    |\langle p, L_np- p \rangle_Q - \langle p,  L_np- p \rangle_{L^2}| \leq \eta \|p\|_{L^2} \| L_np- p\|_{L^2}.
\end{equation*}
Substituting this bound back into the first equation gives $\|L_np- p\|_{L^2}^2 \leq \eta\|p\|_{L^2} \|L_np- p\|_{L^2}$, which completes the proof. 
\end{proof}

Combining \cref{thm:what-MZ-can-do,lem:accuracy-term} gives a structural decomposition of the hyperinterpolation error. Assume that the constants are known in the $L^2$ MZ inequality and the $L^2$ MZ condition. Then, the error bounds decompose into two components
\begin{equation} \label{eq:hyperinterpolation-error-decomposition}
    \|L_n f - f \|_{L^2} \quad\leq\quad \underbrace{(c+1) \sqrt{4\pi} \|f - p^*\|_\infty}_{\text{Stability Penalty}} \quad + \; \underbrace{\eta\|p^*\|_{L^2}}_{\text{Approximation Quality}}.
\end{equation}
The first term represents a \textit{stability penalty}, controlled by the constant $c$ in the $L^2$ MZ condition. The second term captures the \textit{approximation quality} on polynomials, governed by the constants $A$ and $B$ in the $L^2$ MZ inequality via
\begin{equation} \label{eq:eta}
    \eta := \max\{|1- A|, |1-B|\}.
\end{equation}
This decomposition implies that the optimal quadrature design for hyperinterpolation involves a balance between operator stability and integration accuracy on polynomials. In the context of our weight collocation framework, this balance is suited for a regularized optimization scheme.

The rest of the section is organized as follows. In \Cref{sec:perturbation}, we derive geometric-aware regularizers specifically designed to control the stability constant $c$. Subsequently, in \Cref{sec:spectrum-of-Gram-matrix}, we address the optimization of $\eta$ reflecting approximation quality. Finally, in \Cref{sec:sdp}, we synthesize these components and develop an efficient algorithm to solve the resulting collocation problem.

\subsection{Perturbation from a Geometric Prior} \label{sec:perturbation}
To construct a regularizer that stabilizes the $L^2$ MZ constant $c$, we adopt a perturbation perspective. The core idea is to establish a prior geometric baseline with a good MZ constant. We then quantify the deterioration of the MZ constant when weights deviate from this prior, leading to a regularizer that penalizes such deviations.

\subsubsection{The 2-Optimality of Voronoi Partitions}
\label{sec:2-optimality}
The extraction of spatial information from scattered sites on the sphere is typically formalized through the notion of compatible partitions.

\begin{definition}
    A finite collection $\mathcal{R} := \{R_1, R_2, \ldots, R_N\}$ of closed, nonoverlapping (i.e., having no common interior points) subsets such that $\mathbb{S}^2 = \bigcup_{j=1}^N R_j$ is called a \textit{partition} of $\mathbb{S}^2$. We say that $\mathcal{R}$ is \textit{$X_N$-compatible} if each \textit{patch} $R_j \in \mathcal{R}$ contains exactly one point $\mathbf{x}_j \in X_N$ in its interior. We denote the family of all $X_N$-compatible partitions by $\mathcal{P}(X_N)$. 
    The \textit{partition weight} of $\mathcal{R}$ is the vector $\mathbf{r} = (\omega(R_1), \omega(R_2), \ldots, \omega(R_N))^\top \in \mathbb{R}^N$. We call $Q[X_N, \mathcal{R}] := Q[X_N, \mathbf{r}]$ a \textit{geometric quadrature} when its quadrature weight is a partition weight. The \textit{partition norm} of $\mathcal{R}$ is defined by its largest patch diameter
    \begin{equation*}
        \|\mathcal{R}\| := \max_{R_j \in \mathcal{R}} \mathrm{diam}\, R_j \in [0, \pi],
    \end{equation*}
    where $\mathrm{diam}\, S := \sup_{\mathbf{x}, \mathbf{y} \in S} \mathrm{dist}(\mathbf{x}, \mathbf{y})$.
\end{definition}

Because partition weights are strictly nonnegative, $L^2$ MZ inequality and condition for them coincide. In the literature, e.g., \cite{mhaskar2001spherical, narcowich2006localized, filbir2024marcinkiewicz, keiner2007efficient}, MZ inequalities for geometric quadratures $Q[X_N, \mathcal{R}]$ associated with an $X_N$-compatible partition $\mathcal{R}$ have been studied. A standard result is that the constant $c$ scales as $1 + O(n\|\mathcal{R}\|)$.
\begin{theorem} \label{thm:MZ}
    Let $\mathcal{R}$ be an $X_N$-compatible partition of $\mathbb{S}^2$. Under mild conditions ensuring that the product $n\|\mathcal{R}\|$ is small, the quadrature $Q[X_N, \mathcal{R}]$ satisfies an $L^2$ MZ inequality (also an $L^2$ MZ condition) of degree $n \in \mathbb{N}_0$:
    \begin{equation*}
        A\, I(f^2) \leq Q[X_N, \mathcal{R}](f^2) \leq B \, I(f^2) \quad \forall\, f \in \mathbb{P}_n(\mathbb{S}^2),
    \end{equation*}
    where $A = 1 - O(n\|\mathcal{R}\|)$ and $B = 1 + O(n\|\mathcal{R}\|)$.
\end{theorem}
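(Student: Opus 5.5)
The plan is to compare the quadrature $Q[X_N,\mathcal{R}](f^2)$ with the integral $I(f^2)$ patch by patch and control the discrepancy by a Bernstein-type inequality for spherical polynomials. Fix $f\in\mathbb{P}_n(\mathbb{S}^2)$ and set $g := f^2 \in \mathbb{P}_{2n}(\mathbb{S}^2)$. Since $\omega(R_j)=r_j$, we can write
\begin{equation*}
Q[X_N,\mathcal{R}](f^2) - I(f^2) = \sum_{j=1}^N \int_{R_j} \bigl(f(\mathbf{x}_j)^2 - f(\mathbf{y})^2\bigr)\, d\omega(\mathbf{y}).
\end{equation*}
For $\mathbf{y}\in R_j$ we have $\mathrm{dist}(\mathbf{x}_j,\mathbf{y})\le \|\mathcal{R}\|$. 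Writing $f(\mathbf{x}_j)^2-f(\mathbf{y})^2 = (f(\mathbf{x}_j)-f(\mathbf{y}))(f(\mathbf{x}_j)+f(\mathbf{y}))$ and applying Cauchy--Schwarz, the error is bounded by
\begin{equation*}
\Bigl(\sum_j\int_{R_j}|f(\mathbf{x}_j)-f(\mathbf{y})|^2\,d\omega\Bigr)^{1/2}\Bigl(\sum_j\int_{R_j}|f(\mathbf{x}_j)+f(\mathbf{y})|^2\,d\omega\Bigr)^{1/2}.
\end{equation*}
The second factor is at most $\sqrt{2}\,\bigl(Q[X_N,\mathcal{R}](f^2)+I(f^2)\bigr)^{1/2}$.

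The heart of the argument is the first factor. Join $\mathbf{x}_j$ to $\mathbf{y}$ by a geodesic to get $|f(\mathbf{x}_j)-f(\mathbf{y})|\le \|\mathcal{R}\|\sup_{\mathbf{z}\in R_j}|\nabla^* f(\mathbf{z})|$. To return to $L^2$ we need a local maximal function. I would first enlarge each $R_j$ to the cap $\mathcal{C}(\mathbf{x}_j,\|\mathcal{R}\|)$ and use the Nikolskii/localization estimate for polynomials of degree $n$: on caps of radius $\rho \lesssim 1/n$, the value $\sup_{\mathcal{C}(\mathbf{x},\rho)}|p|^2$ is bounded by a constant times the average of $|p|^2$ over $\mathcal{C}(\mathbf{x},2/n)$. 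This is where the localized kernels (for instance the B-spline kernels of \cref{lem:local}, or the filtered kernels in \cite{mhaskar2001spherical}) enter. Summing over $j$ needs a bounded-overlap count of the enlarged caps, which I would obtain by a packing argument like the one in \cref{thm:B-sp-ker}.

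Combining this with Bernstein's inequality $\|\nabla^* f\|_{L^2}\le \sqrt{n(n+1)}\,\|f\|_{L^2}$ should give $\sum_j\int_{R_j}|f(\mathbf{x}_j)-f(\mathbf{y})|^2\,d\omega \le C\,(n\|\mathcal{R}\|)^2 I(f^2)$. I expect this step, the pointwise-to-$L^2$ passage and its overlap bookkeeping, to be the main obstacle.

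Writing $Q := Q[X_N,\mathcal{R}](f^2)$ and $I := I(f^2)$, the two estimates give $|Q-I|\le C' n\|\mathcal{R}\|\sqrt{I}\sqrt{Q+I}$. This is a quadratic inequality in $\sqrt{Q}$, and solving it under the standing smallness assumption on $n\|\mathcal{R}\|$ yields $Q\le(1+O(n\|\mathcal{R}\|))I$ and $Q\ge(1-O(n\|\mathcal{R}\|))I$. That gives $A$ and $B$ as stated. Since the weights $r_j$ are nonnegative, $Q[X_N,|\mathbf{r}|]=Q[X_N,\mathbf{r}]$, so the same bound is also an $L^2$ MZ condition with $c=B$.
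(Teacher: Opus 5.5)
The gap is where you expected it, and it is more than bookkeeping: the local inequality you plan to use is false. For $p\in\mathbb{P}_n(\mathbb{S}^2)$ there is no bound $\sup_{\mathcal{C}(\mathbf{x},\rho)}|p|^2\le C\,\omega(\mathcal{C}(\mathbf{x},2/n))^{-1}\int_{\mathcal{C}(\mathbf{x},2/n)}|p|^2\,d\omega$ with $C$ independent of $n$. To see this, take $\mathbf{x}$ to be the north pole and $p(\mathbf{y}):=q\bigl(y_1/\sin(2/n),\,y_2/\sin(2/n)\bigr)$ with $q$ any bivariate polynomial of degree at most $n$. Then $p\in\mathbb{P}_n(\mathbb{S}^2)$ and $p(\mathbf{x})=q(0,0)$. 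In the rescaled coordinates the cap is exactly the unit disk, and $d\omega$ equals Lebesgue measure up to a factor $1+O(n^{-2})$. So the best $C$ is at least the reciprocal Christoffel function of the unit disk at its centre, $\sum_{0\le k\le n/2}(2k+1)\asymp n^2$; only the radial Zernike polynomials contribute. Degree-$n$ polynomials can super-oscillate on a cap of radius $\sim 1/n$ at the price of being huge elsewhere. Local sup norms are therefore controlled only by global norms with decaying weights, never by local averages. With the true constant, your chain yields only $|Q-I|=O(n^2\|\mathcal{R}\|)\,I(f^2)$. Three smaller issues compound this. The summation step is misdirected: the packing count in \cref{thm:B-sp-ker} is in terms of $q_{X_N}$, which $\|\mathcal{R}\|$ does not control since nodes may cluster, so the number of overlapping caps is unbounded. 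What the partition does provide is the weighted bound $\sum_j r_j\chi_{\mathcal{C}(\mathbf{x}_j,r)}\le\omega\bigl(\mathcal{C}(\cdot,r+\|\mathcal{R}\|)\bigr)$, which follows from the patches being nonoverlapping. Finally, the B-spline kernels of \cref{lem:local} do not reproduce $\mathbb{P}_n$, because their coefficients $\alpha_\ell$ are not $1$.

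The paper itself only cites \cite[Theorem~1]{keiner2007efficient} and \cite{mhaskar2001spherical}. Your reduction is the right one. Put $Pf:=\sum_j f(\mathbf{x}_j)\chi_{R_j}$ and $E:=\|f-Pf\|_{L^2}^2$. Since $\|Pf\|_{L^2}^2=Q$, you get $|\sqrt{Q}-\sqrt{I}|\le\sqrt{E}$ directly, without the quadratic inequality. There are two standard ways to bound $E$. (a) Keep your route but use a filtered kernel $\Phi_n$ that reproduces $\mathbb{P}_n$ and satisfies $|\Phi_n(\mathbf{z},\mathbf{y})|\lesssim n^2(1+n\,\mathrm{dist}(\mathbf{z},\mathbf{y}))^{-\beta}$ with $\beta>2$. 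Cauchy--Schwarz gives $|p(\mathbf{z})|^2\lesssim\int|p(\mathbf{y})|^2n^2(1+n\,\mathrm{dist}(\mathbf{z},\mathbf{y}))^{-\beta}d\omega(\mathbf{y})$. If $n\|\mathcal{R}\|\le 1$, you may replace $\mathbf{z}\in\mathcal{C}(\mathbf{x}_j,\|\mathcal{R}\|)$ by $\mathbf{x}_j$ at a cost of $2^\beta$. Moreover $\sum_j r_j n^2(1+n\,\mathrm{dist}(\mathbf{x}_j,\mathbf{y}))^{-\beta}\lesssim 1$, by comparing each term with the integral over $R_j$. Apply this to $p=\mathcal{L}_if$, where $\mathcal{L}_i:=(\mathbf{x}\times\nabla)_i$ preserve $\mathbb{P}_n$ and $\sum_i|\mathcal{L}_if|^2=|\nabla^*f|^2$; this gives $E\lesssim\|\mathcal{R}\|^2n(n+1)I(f^2)$. (b) More simply, avoid the pointwise-to-$L^2$ passage altogether with Gr\"ochenig's Taylor trick. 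For $\mathbf{y}\in R_j$, the restriction of $f$ to the great circle from $\mathbf{y}$ to $\mathbf{x}_j$ is a trigonometric polynomial of degree $n$. Hence $f(\mathbf{x}_j)-f(\mathbf{y})=\sum_{k\ge1}\frac{d^k}{k!}(D_{\mathbf{a}}^kf)(\mathbf{y})$, where $d\le\|\mathcal{R}\|$ and $D_{\mathbf{a}}=\sum_i a_i\mathcal{L}_i$ for a unit axis $\mathbf{a}=\mathbf{a}(\mathbf{y})$. Expanding at $\mathbf{y}$ rather than at $\mathbf{x}_j$ puts every derivative at the integration variable. Cauchy--Schwarz in the indices gives $|D_{\mathbf{a}}^kf(\mathbf{y})|^2\le\sum_{i_1,\dots,i_k}|\mathcal{L}_{i_1}\cdots\mathcal{L}_{i_k}f(\mathbf{y})|^2$. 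Because the $\mathcal{L}_i$ are skew-adjoint and $\sum_i\mathcal{L}_i^2=\Delta^*$ (the Laplace--Beltrami operator) commutes with each of them, the integral of the right-hand side is $\langle(-\Delta^*)^kf,f\rangle_{L^2}\le(n(n+1))^k\|f\|_{L^2}^2$. Minkowski's inequality then gives $\sqrt{E}\le\bigl(e^{\|\mathcal{R}\|\sqrt{n(n+1)}}-1\bigr)\|f\|_{L^2}$. This yields $A=(2-e^{\|\mathcal{R}\|\sqrt{n(n+1)}})^2$ and $B=e^{2\|\mathcal{R}\|\sqrt{n(n+1)}}$, with no localization, packing, or separation assumption.
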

\begin{proof}
    See \cite[Theorem~1]{keiner2007efficient} for a proof with explicit bounds on the hidden constants. See also \cite{mhaskar2001spherical, narcowich2006localized, filbir2024marcinkiewicz}.
\end{proof}

Because the constant $c$ in the $L^2$ MZ condition for geometric quadrature is intrinsically governed by the partition norm $\|\mathcal{R}\|$, the ideal geometric prior is the $X_N$-compatible partition with minimal partition norm. This naturally motivates the \textit{minimum norm $X_N$-compatible partition problem}
\begin{equation} \label{eq:minimum-norm-partition-problem}
    R^*(X_N) := \inf_{\mathcal{R} \in \mathcal{P}(X_N)} \|\mathcal{R}\|.
\end{equation}
This problem is combinatorial in nature. We therefore seek an approximate solution. A natural candidate is the \textit{Voronoi partition} $\mathcal{V} = \{V_j\}_{j=1}^N$, where each patch $V_j$ is given by the Voronoi cell
\begin{equation} \label{eq:voronoi-cell}
  V_j := \left\{\mathbf{x} \in \mathbb{S}^2 \,|\, \mathrm{dist}(\mathbf{x}, \mathbf{x}_j) \leq \mathrm{dist}(\mathbf{x}, \mathbf{x}_i), \, i = 1, 2, \ldots, N \right\}.
\end{equation}

\newpage
We begin with the following lemma.
\begin{lemma} \label{lem:mesh-norm-and-partition-norm}
$h_{X_N} \leq \|\mathcal{R}\|$ for any $\mathcal{R} \in \mathcal{P}(X_N)$.
\end{lemma}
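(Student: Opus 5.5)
The plan is a direct covering argument. Fix an arbitrary $X_N$-compatible partition $\mathcal{R} = \{R_1, \ldots, R_N\} \in \mathcal{P}(X_N)$ and an arbitrary point $\mathbf{x} \in \mathbb{S}^2$. We will show
\begin{equation*}
\min_{\mathbf{x}_j \in X_N} \mathrm{dist}(\mathbf{x}, \mathbf{x}_j) \leq \|\mathcal{R}\|.
\end{equation*}
Taking the maximum over $\mathbf{x}$ then gives $h_{X_N} \leq \|\mathcal{R}\|$, since the maximum in the definition of $h_{X_N}$ is attained: the map $\mathbf{x} \mapsto \min_j \mathrm{dist}(\mathbf{x}, \mathbf{x}_j)$ is continuous on the compact sphere. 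Even without that, a supremum of quantities each bounded by $\|\mathcal{R}\|$ is still bounded by $\|\mathcal{R}\|$.

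\begin{enumerate}
\item Since $\mathbb{S}^2 = \bigcup_{j} R_j$, the point $\mathbf{x}$ lies in some patch $R_k$.
\item By compatibility, the node $\mathbf{x}_k$ lies in the interior of $R_k$, so in particular $\mathbf{x}_k \in R_k$.
\item Both $\mathbf{x}$ and $\mathbf{x}_k$ belong to $R_k$, so the definition of the diameter gives $\mathrm{dist}(\mathbf{x}, \mathbf{x}_k) \leq \mathrm{diam}\, R_k \leq \|\mathcal{R}\|$.
\item Hence $\min_{j} \mathrm{dist}(\mathbf{x}, \mathbf{x}_j) \leq \mathrm{dist}(\mathbf{x}, \mathbf{x}_k) \leq \|\mathcal{R}\|$, which is the required pointwise bound.
\end{enumerate}

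There is no substantive obstacle here. The only point needing care is that the proof must use that the node associated with the patch containing $\mathbf{x}$ is actually in that patch, and compatibility supplies exactly this. Nonoverlap plays no role; if $\mathbf{x}$ lies on a shared boundary, any patch containing it will do.

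As a consequence, taking the infimum over $\mathcal{P}(X_N)$ also yields $h_{X_N} \leq R^*(X_N)$ in \eqref{eq:minimum-norm-partition-problem}. This is presumably the lower bound that will be used later to compare the Voronoi partition against the optimum.
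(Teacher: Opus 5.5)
Your proposal is correct and follows essentially the same argument as the paper: every $\mathbf{x}\in\mathbb{S}^2$ lies in some patch $R_k$ that also contains its node $\mathbf{x}_k$, so the nearest-node distance is at most $\mathrm{diam}\,R_k \leq \|\mathcal{R}\|$. Your remark about taking the infimum over $\mathcal{P}(X_N)$ is exactly how the paper then uses this lemma in the proof of the $2$-optimality of the Voronoi partition.
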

\begin{proof}
    By definition, any $\mathbf{x} \in \mathbb{S}^2$ belongs to some patch $R_j \in \mathcal{R}$, which by compatibility contains $\mathbf{x}_j \in X_N$. Therefore, it holds that
    \begin{equation*}
        h_{X_N} = \max_{\mathbf{x} \in \mathbb{S}^2} \min_{\mathbf{x}_j \in X_N} \mathrm{dist}(\mathbf{x}, \mathbf{x}_j) = \max_{R_j \in \mathcal{R}} \max_{\mathbf{x} \in R_j} \min_{\mathbf{x}_j \in X_N} \mathrm{dist}(\mathbf{x}, \mathbf{x}_j) \leq \max_{R_j \in \mathcal{R}} \max_{\mathbf{x} \in R_j} \max_{\mathbf{y} \in R_j} \mathrm{dist}(\mathbf{x}, \mathbf{y}) = \|\mathcal{R}\|.
    \end{equation*}
The proof is complete. 
\end{proof}
The Voronoi partition $\mathcal{V}$ is 2-optimal for the minimum norm $X_N$-compatible partition problem \eqref{eq:minimum-norm-partition-problem} in sense of the following proposition:
\begin{proposition} \label{prop:2-optimal}
The Voronoi partition $\mathcal{V}$ is an $X_N$-compatible partition and is $2$-optimal for the minimum norm $X_N$-compatible partition problem \cref{eq:minimum-norm-partition-problem} in the sense that 
\begin{equation} \label{eq:2-optimality-voronoi-partition-norm}
    R^*(X_N) \leq \Vert\mathcal{V}\Vert \leq 2R^*(X_N).
\end{equation}
\end{proposition}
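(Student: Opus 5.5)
The argument has three parts: verify that $\mathcal{V}$ is $X_N$-compatible, obtain the lower bound from the definition of $R^*(X_N)$, and obtain the upper bound by comparing $\|\mathcal{V}\|$ with the mesh norm and invoking \cref{lem:mesh-norm-and-partition-norm}.

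\emph{Compatibility.} Each $V_j$ in \eqref{eq:voronoi-cell} is a finite intersection of closed sets $\{\mathbf{x} : \mathrm{dist}(\mathbf{x},\mathbf{x}_j)\le \mathrm{dist}(\mathbf{x},\mathbf{x}_i)\}$, so it is closed. Every $\mathbf{x}\in\mathbb{S}^2$ has a nearest site, hence the cells cover the sphere.

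Nonoverlap: if $\mathbf{x}$ is interior to both $V_i$ and $V_j$, then a neighbourhood of $\mathbf{x}$ lies in the bisector $\{\mathrm{dist}(\cdot,\mathbf{x}_i)=\mathrm{dist}(\cdot,\mathbf{x}_j)\}$. That bisector is a great circle (the intersection of $\mathbb{S}^2$ with the plane $\langle \mathbf{y},\mathbf{x}_i-\mathbf{x}_j\rangle=0$), so it has empty interior, which is a contradiction.

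Interior point: since the sites are distinct, $\mathrm{dist}(\mathbf{x}_j,\mathbf{x}_j)=0<\mathrm{dist}(\mathbf{x}_j,\mathbf{x}_i)$ for every $i\ne j$. By continuity these strict inequalities persist on a small cap around $\mathbf{x}_j$, so $\mathbf{x}_j\in\mathrm{int}\,V_j$. No other site $\mathbf{x}_i$ lies in $V_j$ at all, because $\mathrm{dist}(\mathbf{x}_i,\mathbf{x}_i)=0<\mathrm{dist}(\mathbf{x}_i,\mathbf{x}_j)$. Hence $\mathcal{V}\in\mathcal{P}(X_N)$.

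\emph{Lower bound.} Since $\mathcal{V}\in\mathcal{P}(X_N)$, the inequality $R^*(X_N)\le\|\mathcal{V}\|$ follows immediately from the definition \eqref{eq:minimum-norm-partition-problem} as an infimum.

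\emph{Upper bound.} The key claim is $\|\mathcal{V}\|\le 2h_{X_N}$. Take $\mathbf{x},\mathbf{y}\in V_j$. By the definition of $V_j$, $\mathbf{x}_j$ is a nearest site to $\mathbf{x}$, so
\[
\mathrm{dist}(\mathbf{x},\mathbf{x}_j)=\min_i \mathrm{dist}(\mathbf{x},\mathbf{x}_i)\le h_{X_N},
\]
and likewise $\mathrm{dist}(\mathbf{y},\mathbf{x}_j)\le h_{X_N}$. The triangle inequality for the geodesic metric then gives $\mathrm{dist}(\mathbf{x},\mathbf{y})\le 2h_{X_N}$. Taking the supremum over $\mathbf{x},\mathbf{y}\in V_j$ and the maximum over $j$ yields $\|\mathcal{V}\|\le 2h_{X_N}$.

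By \cref{lem:mesh-norm-and-partition-norm}, $h_{X_N}\le\|\mathcal{R}\|$ for every $\mathcal{R}\in\mathcal{P}(X_N)$. Taking the infimum over $\mathcal{R}$ gives $h_{X_N}\le R^*(X_N)$, and therefore $\|\mathcal{V}\|\le 2R^*(X_N)$.

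The main point needing care is the compatibility check, specifically that Voronoi cells have disjoint interiors and that each contains exactly one site in its interior. The bisector argument above handles this, and the rest is a direct chain of inequalities. One degenerate case should be noted: if $N=1$, then $V_1=\mathbb{S}^2$ and the single patch is the whole sphere, which trivially contains $\mathbf{x}_1$ in its interior, so the argument still applies.
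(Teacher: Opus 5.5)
Your proof is correct and follows the same route as the paper: you establish the chain $h_{X_N} \le R^*(X_N) \le \|\mathcal{V}\| \le 2h_{X_N}$, getting the last step from the triangle inequality through the site $\mathbf{x}_j$ and the first from \cref{lem:mesh-norm-and-partition-norm} by taking the infimum. Your explicit compatibility check (closedness, covering, the great-circle bisector argument for disjoint interiors, and $\mathbf{x}_j \in \mathrm{int}\, V_j$ with no other site in $V_j$) fills in the step the paper dismisses as obvious.
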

\begin{proof}
The $X_N$-compatibility of $\mathcal{V}$ is obvious by the definition. To prove the $2$-optimality, it suffices to show $h_{X_N} \leq R^*(X_N) \leq \|\mathcal{V}\| \leq 2h_{X_N}$. The first inequality follows immediately from taking the infimum over $\mathcal{R} \in \mathcal{P}(X_N)$ in \cref{lem:mesh-norm-and-partition-norm}. The second inequality is by the definition of \cref{eq:minimum-norm-partition-problem}. For any $\mathbf{x}, \mathbf{y} \in V_i \in \mathcal{V}$, the defining property \eqref{eq:voronoi-cell} implies $\mathrm{dist}(\mathbf{x}, \mathbf{x}_i) = \min_{\mathbf{x}_j \in X_N} \mathrm{dist}(\mathbf{x}, \mathbf{x}_j) \leq h_{X_N}$ and similarly $\mathrm{dist}(\mathbf{y}, \mathbf{x}_i) \leq h_{X_N}$. Thus, by the triangle inequality, $\mathrm{dist}(\mathbf{x}, \mathbf{y}) \leq \mathrm{dist}(\mathbf{x}, \mathbf{x}_i) + \mathrm{dist}(\mathbf{y}, \mathbf{x}_i) \leq 2h_{X_N}$. This establishes the last inequality, thereby completing the proof.
\end{proof}

The upper bound in \cref{eq:2-optimality-voronoi-partition-norm} is remarkably \textit{sharp}, as illustrated by the following simple example.

\begin{example}[Equatorial Points] \label{ex:equatorial}
    For an even integer $N$, consider the set
    \begin{equation*}
        X_N := \{\mathbf{x}_j :=(\sin\theta\cos \phi_j, \sin\theta \sin \phi_j, \cos\theta)^\top\}_{j=0}^{N-1}, \quad\text{where}\quad \theta = \pi/2, \quad  \phi_j := 2\pi j / N,
    \end{equation*}
    uniformly distributed on the equator. The points farthest from $X_N$ on the sphere are the poles, yielding $h_{X_N} = \pi/2$. The Voronoi partition $\mathcal{V}_N$ associated with $X_N$ divides the sphere into $N$ identical longitudinal lunes, see \cref{fig:equatorial_vor}. Because the poles are equidistant to all points in $X_N$, every Voronoi cell spans exactly from the North pole to the South pole. Thus, for any even $N$, $\|\mathcal{V}_N\| = \pi = 2h_{X_N}$. 

    However, one can construct $X_N$-compatible partitions with significantly smaller partition norms. Fix any small $\epsilon > 0$, define the patches as
    \begin{equation*}
    \begin{aligned}
        R_{2k+1} &:= \{ \mathbf{x}(\theta, \phi) \in \mathbb{S}^2 \,|\, 0 \leq \theta \leq \pi/2 - \epsilon \cos(N\phi/2), \phi_{2k} \leq \phi \leq \phi_{2k+2} \}, \\
        R_{2k} &:= \{ \mathbf{x}(\theta, \phi) \in \mathbb{S}^2 \,|\, \pi/2 - \epsilon \cos(N\phi/2) \leq \theta \leq \pi, \phi_{2k-1} \leq \phi \leq \phi_{2k+1} \},
    \end{aligned}
    \end{equation*}
    with indices wrapping modulo $N$, see \cref{fig:equatorial_opt}. It is straightforward to check that $\mathcal{R}_N := \{R_k\}_{k=1}^N$ is an $X_N$-compatible partition. If $N$ is chosen sufficiently large, then the maximum distance between any two points in each patch $R_k$ is precisely the distance from the pole that it contains to the tip of the zigzag curve, giving $\mathrm{diam}(R_j) = \pi/2 + \epsilon$. By taking $\epsilon \to 0$, we find $\|\mathcal{R}_N\| \to \pi/2 = h_{X_N}$. This implies $R^*(X_N) = \pi/2 = h_{X_N} = \|\mathcal{V}_N\|/2$. Hence, the upper bound of \eqref{eq:2-optimality-voronoi-partition-norm} is attainable.

    \begin{figure}[htbp]
     \centering
     \begin{subfigure}[b]{0.49\textwidth}
         \centering
         \includegraphics[width=0.75\textwidth]{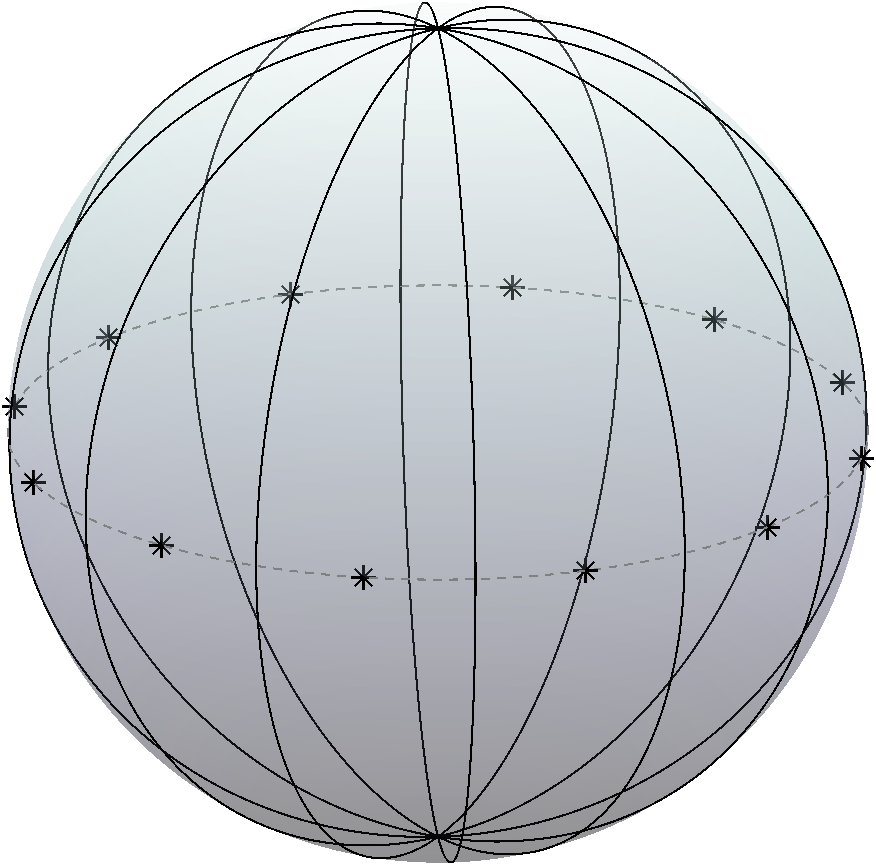}
         \caption{The Voronoi partition $\mathcal{V}_N$.}
         \label{fig:equatorial_vor}
     \end{subfigure}
     \hfill
     \begin{subfigure}[b]{0.49\textwidth}
         \centering
         \includegraphics[width=0.75\textwidth]{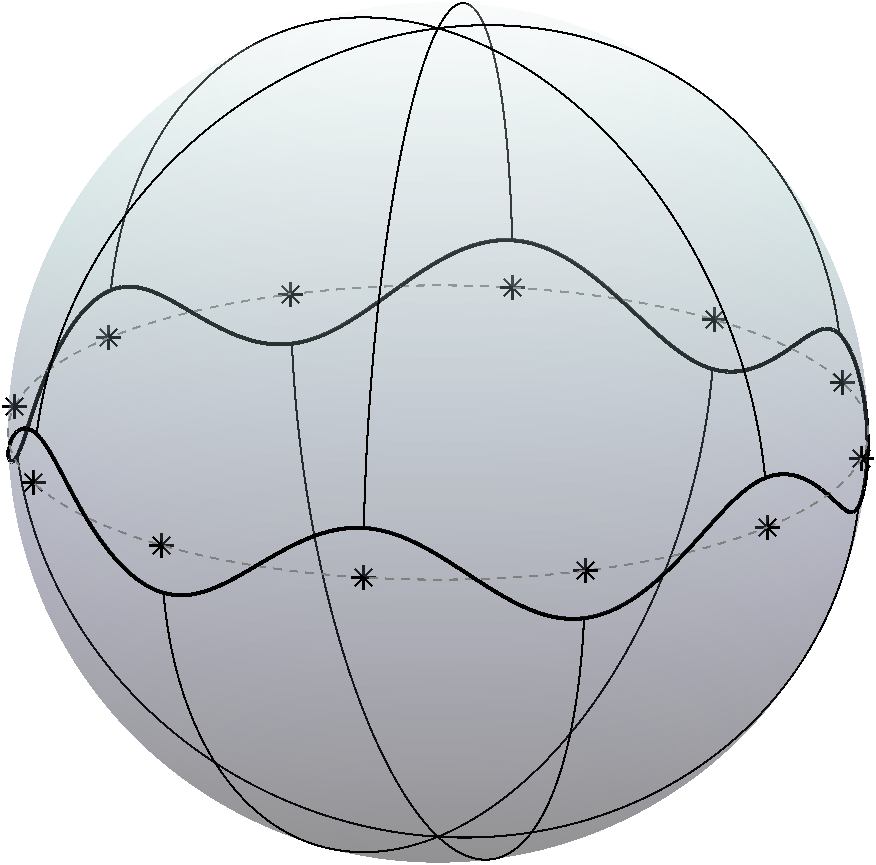}
         \caption{The constructed partition $\mathcal{R}_N$ with $\epsilon = 0.12$.}
         \label{fig:equatorial_opt}
     \end{subfigure}
     
     \caption{Two $X_N$-compatible partitions for the equatorial points ($N = 12$) as described in \cref{ex:equatorial}.}
\end{figure}

\end{example}

Recall from \cref{thm:MZ} that the theoretical stability of a geometric quadrature is fundamentally governed by $c = 1 + O(n\|\mathcal{R}\|)$. While directly minimizing the partition norm as in \eqref{eq:minimum-norm-partition-problem} is intractable, \cref{prop:2-optimal} confirms that the Voronoi partition provides a 2-optimal surrogate for the intractable problem \eqref{eq:minimum-norm-partition-problem}. This justifies the Voronoi weights as a suitable geometric prior.

Finally, we remark that the computation of Voronoi partitions is efficient; indeed algorithms running in $O(N \log N)$ operations are well-established in the literature \cite{boissonnat1998algorithmic, de2008computational, burkardt2019voronoi, na2002voronoi, renka1997algorithm, caroli2010robust}. Patch measures can be explicitly evaluated via the Gauss-Bonnet theorem (cf. \cite[(1.1)-(1.2)]{chern2024area}).

\subsubsection{Perturbation of Weights and Geometry-Aware Regularization}
\label{sec:geometric-aware-regularization}
To quantify the performance degradation as $\mathbf{w}$ deviates from this reference, we define the \textit{$\chi^2$-divergence} of $\mathbf{w}$ relative to the prior $\mathcal{R}$ as
\begin{equation} \label{eq:chi-square-divergence}
    \|\mathbf{w}\|_{\mathcal{R}, 2} := \left[\sum_{j=1}^N \frac{(w_j - r_j)^2}{r_j}\right]^{1/2}.
\end{equation}

\begin{proposition} \label{prop:perturb-III}
    Let $\mathcal{R} = \{R_j\}_{j=1}^N$ be an $X_N$-compatible partition. Suppose the geometric quadrature $Q[X_N, \mathcal{R}]$ satisfies an $L^2$ MZ inequality of degree $n \in \mathbb{N}_0$ with constants $A \leq B$. That is, it holds that
    \begin{equation} \label{eq:MZ-R}
        A \, I(f^2) \leq Q[X_N, \mathcal{R}](f^2) \leq B \, I(f^2), \quad \forall\, f \in \mathbb{P}_n(\mathbb{S}^2).
    \end{equation}
    Let $\mathbf{w} \in \mathbb{R}^N$ be an arbitrary weight vector. Then, the quadrature $Q[X_N, \mathbf{w}]$ satisfies the following inequalities for all $f \in \mathbb{P}_n(\mathbb{S}^2)$:
    \begin{equation*}
        \resizebox{\textwidth}{!}{
        $\left( A - \dfrac{(n+1)\sqrt{B}}{\sqrt{4\pi}} \|\mathbf{w}\|_{\mathcal{R}, 2} \right) I(f^2) \leq Q[X_N, \mathbf{w}](f^2) \leq Q[X_N, |\mathbf{w}|](f^2) \leq \left( B + \dfrac{(n+1)\sqrt{B}}{\sqrt{4\pi}} \|\mathbf{w}\|_{\mathcal{R}, 2} \right) I(f^2)$.
        }
    \end{equation*}
    This chain of inequalities is both an $L^2$ MZ inequality and an $L^2$ MZ condition.
\end{proposition}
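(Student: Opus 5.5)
We write $\mathbf{w} = \mathbf{r} + \bm{\delta}$ with $\delta_j := w_j - r_j$ and treat $Q[X_N,\mathbf{w}]$ as a perturbation of the geometric quadrature, so that for every $f \in \mathbb{P}_n(\mathbb{S}^2)$
\begin{equation*}
    Q[X_N, \mathbf{w}](f^2) = Q[X_N, \mathcal{R}](f^2) + \sum_{j=1}^N \delta_j f(\mathbf{x}_j)^2 .
\end{equation*}
The key estimate is a bound on the perturbation sum, and we bound its absolute value $E(f) := \sum_j |\delta_j| f(\mathbf{x}_j)^2$. Inserting $\sqrt{r_j}$ and applying Cauchy--Schwarz gives
\begin{equation*}
    E(f) = \sum_{j=1}^N \frac{|\delta_j|}{\sqrt{r_j}}\,\sqrt{r_j}\, f(\mathbf{x}_j)^2 \le \|\mathbf{w}\|_{\mathcal{R},2}\Bigl(\sum_{j=1}^N r_j f(\mathbf{x}_j)^4\Bigr)^{1/2} \le \|\mathbf{w}\|_{\mathcal{R},2}\,\|f\|_\infty \Bigl(Q[X_N,\mathcal{R}](f^2)\Bigr)^{1/2}.
\end{equation*}
Here the hypothesis \eqref{eq:MZ-R} gives $Q[X_N,\mathcal{R}](f^2) \le B\, I(f^2)$, which produces the factor $\sqrt{B}$. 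It then remains to control $\|f\|_\infty$ by the $L^2$ norm. We will use the Nikolskii-type inequality obtained from the reproducing kernel $G_n$: by Cauchy--Schwarz and the addition theorem \eqref{eq:addition-theorem},
\begin{equation*}
    |f(\mathbf{x})|^2 \le G_n(\mathbf{x},\mathbf{x})\,\|f\|_{L^2}^2 = \sum_{\ell=0}^n \frac{2\ell+1}{4\pi}\,\|f\|_{L^2}^2 = \frac{(n+1)^2}{4\pi}\,I(f^2).
\end{equation*}
Substituting this into the bound on $E(f)$ yields
\begin{equation*}
    E(f) \le \frac{(n+1)\sqrt{B}}{\sqrt{4\pi}}\,\|\mathbf{w}\|_{\mathcal{R},2}\, I(f^2).
\end{equation*}

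With this estimate the chain follows directly. For the lower bound, $Q[X_N,\mathbf{w}](f^2) \ge Q[X_N,\mathcal{R}](f^2) - E(f) \ge (A - \cdots)\,I(f^2)$. The middle inequality $Q[X_N,\mathbf{w}](f^2) \le Q[X_N,|\mathbf{w}|](f^2)$ holds trivially since $w_j \le |w_j|$. For the upper bound we use $|w_j| \le r_j + |\delta_j|$, which is valid because $r_j > 0$; this gives $Q[X_N,|\mathbf{w}|](f^2) \le Q[X_N,\mathcal{R}](f^2) + E(f) \le (B + \cdots)\,I(f^2)$.

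Finally, the outer bounds are exactly an $L^2$ MZ inequality of the form \eqref{eq:L2-MZ-inequality}, with $A' = A - \cdots \le 1 \le B' = B + \cdots$, while the rightmost bound is an $L^2$ MZ condition with $c = B'$. The only delicate point is the pairing in the Cauchy--Schwarz step. The $L^\infty$ factor must be split off from $f(\mathbf{x}_j)^4$ so that exactly one factor $\sqrt{Q[X_N,\mathcal{R}](f^2)}$ remains; this is what yields $\sqrt{B}$ and the linear factor $n+1$ in the statement. The Nikolskii constant $(n+1)/\sqrt{4\pi}$ follows directly from $P_\ell(1) = 1$.
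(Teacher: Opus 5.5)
Your proposal is correct and follows essentially the same route as the paper. Both arguments use Cauchy--Schwarz with the $\sqrt{r_j}$ pairing against $\sum_j r_j f(\mathbf{x}_j)^4$, split off $\|f\|_\infty^2$ to leave $Q[X_N,\mathcal{R}](f^2)\le B\,I(f^2)$, apply the reproducing-kernel Nikolskii bound $\|f\|_\infty^2\le (n+1)^2 I(f^2)/(4\pi)$, and close with the sandwich $r_j-|w_j-r_j|\le w_j\le |w_j|\le r_j+|w_j-r_j|$. Your justification of $A'\le 1\le B'$ is also valid, since $f\equiv 1\in\mathbb{P}_n(\mathbb{S}^2)$ forces $A\le 1\le B$.
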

\begin{proof}
    By the Cauchy--Schwarz inequality, for any $f \in \mathbb{P}_n(\mathbb{S}^2)$, it holds that
    \begin{align*}
        \sum_{j=1}^N |w_j - r_j| |f(\mathbf{x}_j)|^2 
        \leq \left[ \sum_{j=1}^N \frac{(w_j - r_j)^2}{r_j} \right]^{1/2} \left[ \sum_{j=1}^N r_j |f(\mathbf{x}_j)|^4 \right]^{1/2} = \|\mathbf{w}\|_{\mathcal{R}, 2} \left[ \sum_{j=1}^N r_j |f(\mathbf{x}_j)|^4 \right]^{1/2}.
    \end{align*}
    The fourth-order term can be handled as follows
    \begin{align*}
        \sum_{j=1}^N r_j |f(\mathbf{x}_j)|^4 
        & \leq \|f\|_\infty^2 \sum_{j=1}^N r_j |f(\mathbf{x}_j)|^2 = \|f\|_\infty^2 Q[X_N, \mathcal{R}](f^2).
    \end{align*}
    From the reproducing property of $G_n$ and the Cauchy--Schwarz inequality, we have, for any $\mathbf{x} \in \mathbb{S}^2$, that
    \begin{equation*}
        |f(\mathbf{x})|^2 \leq \left[\int_{\mathbb{S}^2} |G_n(\mathbf{x}, \mathbf{y})|^2\,d\omega(\mathbf{y})\right]\left[\int_{\mathbb{S}^2} |f(\mathbf{y})|^2\,d\omega(\mathbf{y})\right] = \frac{(n+1)^2}{4\pi} \|f\|_{L^2}^2.
    \end{equation*}
    Thus, $\|f\|_\infty^2 \leq (n+1)^2\|f\|_{L^2}^2/(4\pi)$. Combining these with the $L^2$ MZ inequality for $Q[X_N, \mathcal{R}]$ yields
    \begin{equation*}
        \sum_{j=1}^N r_j|f(\mathbf{x}_j)|^4 \leq \frac{(n+1)^2}{4\pi} B \|f\|_{L^2}^4.
    \end{equation*}
    Taking the square root and substituting it back into the perturbation bound yields
    \begin{equation*}
        \sum_{j=1}^N |w_j-r_j||f(\mathbf{x}_j)|^2 \leq \frac{(n+1)\sqrt{B}}{\sqrt{4\pi}}\|\mathbf{w}\|_{\mathcal{R}, 2} I(f^2).
    \end{equation*}
    To conclude the proof, it suffices to substitute the above estimate into
    \begin{multline*}
       \sum_{j=1}^N r_j |f(\mathbf{x}_j)|^2 - \sum_{j=1}^N |w_j - r_j| |f(\mathbf{x}_j)|^2 \leq \sum_{j=1}^N w_j|f(\mathbf{x}_j)|^2 \\
       \leq \sum_{j=1}^N |w_j| |f(\mathbf{x}_j)|^2 \leq \sum_{j=1}^N r_j |f(\mathbf{x}_j)|^2 + \sum_{j=1}^N |w_j - r_j| |f(\mathbf{x}_j)|^2.
    \end{multline*}
The proof is complete.
\end{proof}

\cref{prop:perturb-III} shows that $\|\mathbf{w}\|_{\mathcal{R}, 2}$ controls the deterioration of the constant $c$ in the $L^2$ MZ condition. Hence, it is a suitable regularizer for promoting stability in hyperinterpolation on scattered sites. In fact, one can consider the following \textit{$\chi^2$-divergence collocation} as an alternative to the standard $\ell_2$ minimization:
\begin{align} \label{eq:voronoi-collocation}
    \begin{aligned}
    \min \quad & \|\mathbf{w}\|_{\mathcal{R}, 2} \\ 
    \text{s.t.} \quad & \mathbf{Y}_{n^+} \mathbf{w} = \mathbf{b}_{n^+}, \quad \mathbf{w} \in \mathbb{R}^N.
    \end{aligned}
\end{align}
A particular interesting choice of $\mathcal{R}$ is the Voronoi partition $\mathcal{V}$, justified by its $2$-optimality in \Cref{sec:2-optimality}. This collocation model yields quadrature weights with tighter MZ constants, hence better hyperinterpolation stability.

\subsection{Spectrum of the Gram Matrix} \label{sec:spectrum-of-Gram-matrix}
While the regularizer in \Cref{sec:perturbation} controls the stability constant $c$, the error decomposition \eqref{eq:hyperinterpolation-error-decomposition} also requires controlling the accuracy term related to $\eta$. As established in \eqref{eq:eta}, it is governed by the constants $A$ and $B$ in the $L^2$ MZ inequality. These two constants admit a characterization in terms of the eigenvalues of the following Gram matrix:
\begin{equation*}
    \mathbf{G}_n[X_N](\mathbf{w}) := \mathbf{Y}_n  \mathrm{diag}(\mathbf{w})\mathbf{Y}_n^\top \in \mathbb{R}^{M \times M}, \quad M := (n+1)^2.
\end{equation*}

\begin{proposition}[\!\!\cite{an2026roleweakmarcinkiewiczzygmundconstants,filbir2011marcinkiewicz,grochenig2020sampling}]
    \label{prop:eigenvalue-characterization}
    For any quadrature $Q[X_N, \mathbf{w}]$, the sharpest constants $A$ and $B$ satisfying the $L^2$ MZ inequality of degree $n \in \mathbb{N}_0$
    \begin{equation*}
        A\,I(f^2) \leq Q[X_N, \mathbf{w}](f^2) \leq B\,I(f^2), \quad \forall\, f \in \mathbb{P}_n(\mathbb{S}^2),
    \end{equation*}
    are the smallest and largest eigenvalues of the Gram matrix $\mathbf{G}_n[X_N](\mathbf{w})$, respectively. That is, we have
    \begin{equation} \label{eq:eigenvalue-characterization}
        A = \lambda_1(\mathbf{G}_n[X_N](\mathbf{w})) \quad \;\text{and} \quad  \quad B = \lambda_{M}(\mathbf{G}_n[X_N](\mathbf{w})).
    \end{equation}
\end{proposition}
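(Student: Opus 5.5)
The plan is to reduce both sides of the $L^2$ MZ inequality to quadratic forms in the coefficient vector of $f$ with respect to the orthonormal basis $\{Y_{\ell,k}\}$, and then invoke the Rayleigh--Ritz (Courant--Fischer) characterization of extremal eigenvalues of a symmetric matrix.

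First, write an arbitrary $f \in \mathbb{P}_n(\mathbb{S}^2)$ as $f = \sum_{\ell=0}^n\sum_{k=-\ell}^{\ell} \hat f_{\ell,k} Y_{\ell,k}$ and collect the coefficients into $\hat{\mathbf{f}} \in \mathbb{R}^M$, ordered as the rows of $\mathbf{Y}_n$. By orthonormality (Parseval), $I(f^2) = \|f\|_{L^2}^2 = \|\hat{\mathbf{f}}\|_2^2$. The vector of point values is $(f(\mathbf{x}_1),\ldots,f(\mathbf{x}_N))^\top = \mathbf{Y}_n^\top \hat{\mathbf{f}}$. Hence
\begin{equation*}
Q[X_N,\mathbf{w}](f^2) = \sum_{j=1}^N w_j f(\mathbf{x}_j)^2 = (\mathbf{Y}_n^\top\hat{\mathbf{f}})^\top \mathrm{diag}(\mathbf{w}) (\mathbf{Y}_n^\top\hat{\mathbf{f}}) = \hat{\mathbf{f}}^\top \mathbf{G}_n[X_N](\mathbf{w})\, \hat{\mathbf{f}}.
\end{equation*}
The map $f \mapsto \hat{\mathbf{f}}$ is a bijection between $\mathbb{P}_n(\mathbb{S}^2)$ and $\mathbb{R}^M$, since $\dim \mathbb{P}_n(\mathbb{S}^2) = M$. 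Therefore the MZ inequality is equivalent to
\begin{equation*}
A\|\mathbf{c}\|_2^2 \leq \mathbf{c}^\top \mathbf{G}_n[X_N](\mathbf{w})\,\mathbf{c} \leq B\|\mathbf{c}\|_2^2 \quad \text{for all } \mathbf{c}\in\mathbb{R}^M.
\end{equation*}

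Second, $\mathbf{G}_n[X_N](\mathbf{w})$ is real symmetric, and this holds regardless of the signs of $w_j$. The Rayleigh--Ritz theorem then gives $\min_{\mathbf{c}\neq 0} \mathbf{c}^\top\mathbf{G}\mathbf{c}/\|\mathbf{c}\|_2^2 = \lambda_1$ and $\max_{\mathbf{c}\neq 0} \mathbf{c}^\top\mathbf{G}\mathbf{c}/\|\mathbf{c}\|_2^2 = \lambda_M$, with both extrema attained at the corresponding eigenvectors. So the inequality holds with $A=\lambda_1$ and $B=\lambda_M$. Moreover, any valid $A$ must satisfy $A\le\lambda_1$ and any valid $B$ must satisfy $B\ge\lambda_M$: testing with $f$ whose coefficient vector is an eigenvector gives equality at $\lambda_1$ (respectively $\lambda_M$). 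This establishes sharpness.

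The only delicate point is conventional rather than technical. The paper's definition of the $L^2$ MZ inequality imposes $A\le 1\le B$, so the statement should be read as identifying the optimal constants without that side constraint. I would add a brief remark on this. If the side constraint is wanted, I would note that $\lambda_1 \le 1 \le \lambda_M$ whenever the quadrature is exact for $f\equiv 1$ (via $\mathbf{c}$ equal to the coefficient vector of the constant $1/\sqrt{4\pi}$), and otherwise the constraint has to be imposed separately. Beyond this, the argument is routine.
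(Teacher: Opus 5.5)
Your proof is correct. The paper gives no proof of its own and simply cites the literature; your argument (Parseval in the orthonormal basis, rewriting $Q[X_N,\mathbf{w}](f^2)$ as the quadratic form of $\mathbf{G}_n[X_N](\mathbf{w})$, then Rayleigh--Ritz with eigenvectors giving sharpness) is the standard proof in those references. Your caveat is also valid: under the paper's definition, which requires $A\le 1\le B$, the identification $A=\lambda_1$, $B=\lambda_M$ holds only when the eigenvalues bracket $1$. That is guaranteed when $\sum_j w_j = 4\pi$, and otherwise the side constraint must be dropped.
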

\cref{prop:eigenvalue-characterization} reduces the design of accurate quadrature weights to the optimization of the spectrum of the Gram matrix. We present two formulations for this spectral optimization.

\subsubsection{Minimal \texorpdfstring{$\eta$}{η}}
\label{sec:minimal-eta}
As demonstrated in \eqref{eq:hyperinterpolation-error-decomposition}, the error resulted by the accuracy term is bounded by $\eta \|p^*\|_{L^2}$. Therefore, one approach is to minimize $\eta$ directly. More specifically, let $\mathbf{I} \in \mathbb{R}^{M \times M}$ denote the identity matrix. In view of the eigenvalue characterization \cref{eq:eigenvalue-characterization}, the constant $\eta$ can be expressed as the spectral $2$-norm. That is, it holds that
\begin{equation*}
    \eta = \max\{|1-A|, |1-B|\} = \|\mathbf{I} - \mathbf{G}_n[X_N](\mathbf{w})\|_2.
\end{equation*}
This leads to the following model for \textit{spectral collocation}:
\begin{align} \label{eq:spectral-collocation}
    \begin{aligned}
        \min \quad & \|\mathbf{I} - \mathbf{G}_n[X_N](\mathbf{w})\|_2 \\
        \text{s.t.} \quad & \mathbf{Y}_{n^+} \mathbf{w} = \mathbf{b}_{n^+}, \quad \mathbf{w} \in \mathbb{R}^N.
    \end{aligned}
\end{align}
This optimization problem collocates quadrature weights that minimize $\eta$ for an $L^2$ MZ inequality of degree $n$ among all feasible weight vectors exact of degree $n^+$.

\subsubsection{Well-Conditioned Gram Matrix}
\label{sec:well-conditioned-Gram-matrix}
To solve the spectral collocation problem \eqref{eq:spectral-collocation}, it is typical to consider its SDP reformulation by introducing an auxiliary variable $\eta \in \mathbb{R}$ and replacing the spectral $2$-norm with a linear matrix inequality. That is, we could reformulate (\ref{eq:spectral-collocation}) as
\begin{align*}
    \begin{aligned}
    \min \quad & \eta \\
    \text{s.t.} \quad & \mathbf{Y}_{n^+} \mathbf{w} = \mathbf{b}_{n^+} \\
    & -\eta\, \mathbf{I} \preceq \mathbf{I} - \mathbf{G}_n[X_N](\mathbf{w}) \preceq \eta \,\mathbf{I}, \quad \mathbf{w} \in \mathbb{R}^N.
    \end{aligned}
\end{align*}
On the other hand, this SDP reformulation becomes computationally prohibitive for large $N$ due to the dense matrix inequalities. For this reason, instead of minimizing the spectral norm, we consider minimizing the condition number of the Gram matrix. Let us make two standard assumptions regarding the quadrature $Q[X_N, \mathbf{w}]$ as follows:
\begin{enumerate}[(1),noitemsep]
    \item it integrates the constant function exactly, i.e., \eqref{eq:exact-for-const}, which naturally forces $A \leq 1 \leq B$; and
    \item the resulting Gram matrix is positive definite, i.e., $\mathbf{G}_n[X_N](\mathbf{w}) \succ 0$, ensuring $A > 0$.
\end{enumerate}
The condition number is then given by
\begin{equation} \label{eq:cond-gram}
    \kappa := \mathrm{cond}(\mathbf{G}_n[X_N](\mathbf{w})) = \frac{\lambda_{M}(\mathbf{G}_n[X_N](\mathbf{w}))}{\lambda_1(\mathbf{G}_n[X_N](\mathbf{w}))} = \frac{B}{A} \geq 1.
\end{equation}

\begin{remark}[Relationship of $\kappa$ and $\eta$]
    Given the assumption $A \leq 1 \leq B$, we have $B = \kappa A \leq \kappa$ and $A = B/\kappa \geq 1/\kappa$. This yields $B - 1 \leq \kappa - 1$ and $1 - A \leq 1 - 1/\kappa$. Since $\kappa \geq 1$, it holds that
    \begin{equation*}
        1-1/\kappa \leq \eta \leq \kappa - 1.
    \end{equation*}
    This relationship shows that minimizing $\kappa$ effectively minimizes $\eta$, justifying the rationale of replacing the latter with the former.
\end{remark}
In the literature, there are some methodologies for minimizing the condition numbers of Gram matrices, e.g.,   \cite{marechal2009optimizing, chen2011minimizing}. These works, however, generally yield quasiconvex problems that are often more difficult than the SDP \cref{eq:spectral-collocation}. Instead, inspired by the classical experimental design theory \cite{huan2024optimal} and recent advances in \cite{an2010well}, we formulate a tractable surrogate by replacing the condition number with the negative log-determinant of a Gram matrix. This yields the following model of \textit{$D$-optimal collocation}:
\begin{align} \label{eq:opt-det}
\begin{aligned}
\min \quad & -\log\det \mathbf{G}_n[X_N](\mathbf{w}) \\
\text{s.t.} \quad & \mathbf{Y}_{n^+}\mathbf{w} = \mathbf{b}_{n^+}, \quad \mathbf{w} \in \mathbb{R}^N.
\end{aligned}
\end{align}
A practical advantage of this formulation is that the negative log-determinant acts as an implicit barrier, evading the need for handling explicit conic constraints. This reduces the model to a smooth and convex optimization problem over the positive definite cone, and it can be solved efficiently by standard optimization methods such as Newton-type methods.

To further understand the effect of the negative log-determinant objective, we utilize the addition theorem \eqref{eq:addition-theorem} to show that the trace of $\mathbf{G}_n[X_N](\mathbf{w})$ is inherently constant in the following sense:
\begin{equation*}
  \mathrm{Tr}(\mathbf{G}_n[X_N](\mathbf{w})) = \sum_{j=1}^N w_j  \sum_{\ell=0}^n \sum_{k=-\ell}^\ell Y_{\ell,k}(\mathbf{x}_j)Y_{\ell,k}(\mathbf{x}_j)= \sum_{j=1}^N w_j \sum_{\ell = 0}^n \frac{2\ell + 1}{4\pi}P_\ell(\langle \mathbf{x}_j, \mathbf{x}_j\rangle) = M,
\end{equation*}
where we have utilized \eqref{eq:exact-for-const} and the fact that $P_\ell(1) = 1$. Consider any weight vector $\mathbf{w} \in \mathbb{R}^N$ admitting a positive definite Gram matrix. Let the eigenvalues of the Gram matrix be $\lambda_1 \geq \lambda_2 \geq \cdots \geq \lambda_{M} > 0$. By the AM-GM inequality, we have
\begin{equation*}
  \det \mathbf{G}_n[X_N](\mathbf{w})^{1/M} = \left(\prod_{i=1}^{M} \lambda_i \right)^{1/M} \leq \frac{1}{M}\sum_{i=1}^{M} \lambda_i =\frac{1}{M} \mathrm{Tr}(\mathbf{G}_n[X_N](\mathbf{w})) = 1,
\end{equation*}
where the equality holds if and only if all $\lambda_i$ are equal to $1$. In this case, we have $\eta = 0$ and $\kappa = 1$. Therefore, minimizing the negative log-determinant encourages the eigenvalues to be close to $1$, which aligns the $D$-optimal objective with the minimization of $\kappa$ and $\eta$.

\subsection{Towards a Unified MZ Collocation Model} \label{sec:sdp}
The hyperinterpolation error in \eqref{eq:hyperinterpolation-error-decomposition} decomposes into two components: a stability penalty governed by $c$, and an approximation quality term governed by $\eta$. In \Cref{sec:perturbation,sec:spectrum-of-Gram-matrix}, we examine these components separately. To bound $c$, we propose the geometry-aware regularizer $\chi^2$-divergence. To minimize $\eta$, we propose two methods to optimize the spectrum of the Gram matrix. We now combine these two components into a single optimization model. We treat the accuracy objective as primary and incorporate stability via a geometry-aware regularizer, and the model reads as
\begin{align} \label{eq:MZ-collocation}
\begin{aligned}
    \min \quad & P(\mathbf{w}) :=\underbrace{J_n(\mathbf{w})\vphantom{\frac{\lambda}{2}}}_{\text{Accuracy}} + \underbrace{\frac{\lambda}{2} \|\mathbf{w}\|_{\mathcal{V},2}^2}_{\text{Stability}} \\
\text{s.t.} \quad & \mathbf{Y}_{n^+}\mathbf{w} = \mathbf{b}_{n^+}.
\end{aligned}
\end{align}
The primary objective $J_n(\mathbf{w})$ targets an $L^2$ MZ inequality of degree $n$. This can be chosen as either the spectral norm $\|\mathbf{I} - \mathbf{G}_n[X_N](\mathbf{w})\|_2$ from \eqref{eq:spectral-collocation}, or the computationally efficient $D$-optimal design surrogate $-\log\det \mathbf{G}_n[X_N](\mathbf{w})$ from \eqref{eq:opt-det}. The geometry-aware regularizer $\chi^2$-divergence is defined with respect to the Voronoi partition $\mathcal{V}$, theoretically known as being $2$-optimal (cf. \cref{prop:2-optimal}). The constant $\lambda > 0$ is the regularization strength.

Applying a generic SDP solver to \eqref{eq:MZ-collocation}, however, is computationally expensive for large $N$, because it treats $\mathbf{G}_n[X_N](\mathbf{w})$ as a dense matrix. We next exploit the sum-of-rank-one structure of the Gram matrix to obtain more efficient evaluations for the gradient and Hessian.

\subsubsection{Exploiting Sum-of-Rank-One Structure for \texorpdfstring{$D$}{D}-optimal Collocation}
Let us instantiate $J_n(\mathbf{w}) = -\log\det \mathbf{G}_n[X_N](\mathbf{w})$. Because the negative log-determinant also acts as a barrier function, the constraint $\mathbf{G}_n[X_N](\mathbf{w}) \succ 0$ is implicitly enforced. The objective function becomes
\begin{equation*}
    P(\mathbf{w}) = -\log\det\mathbf{G}_n[X_N](\mathbf{w}) + \frac{\lambda}{2} \|\mathbf{w}\|_{\mathcal{V}, 2}^2.
\end{equation*}
The Gram matrix is a linear combination of $N$ rank-$1$ matrices. That is, we have
\begin{equation*}
    \mathbf{G}_n[X_N](\mathbf{w}) = \mathbf{Y}_n\mathrm{diag}(\mathbf{w})\mathbf{Y}_n^\top = \sum_{j=1}^N w_j \mathbf{y}_n(\mathbf{x}_j)\mathbf{y}_n(\mathbf{x}_j)^\top,
\end{equation*}
where, for each $\mathbf{x} \in \mathbb{S}^2$, the vector $\mathbf{y}_n(\mathbf{x}) \in \mathbb{R}^{M}$ is the evaluation of all spherical harmonics up to degree $n$ at $\mathbf{x}$
\begin{equation*}
    \mathbf{y}_n(\mathbf{x}) := (Y_{0,0}(\mathbf{x}), Y_{1,-1}(\mathbf{x}), Y_{1,0}(\mathbf{x}), Y_{1,1}(\mathbf{x}), \ldots, Y_{n, n}(\mathbf{x}))^\top.
\end{equation*}
By taking derivatives, the $N \times 1$ gradient $\mathbf{g} = \nabla P(\mathbf{w})$ and the dense $N \times N$ Hessian $\mathbf{H} = \nabla^2 P(\mathbf{w})$ have the following components, respectively:
\begin{align*}
g_i &= -\mathrm{Tr} [\mathbf{G}_n[X_N]({\mathbf{w}})^{-1} \mathbf{y}_n(\mathbf{x}_i)\mathbf{y}_n(\mathbf{x}_i)^\top] + \nabla_{w_i} [(\lambda/2)\|\mathbf{w}\|_{\mathcal{V}, 2}^2] \\
&=-\mathbf{y}_n(\mathbf{x}_i)^\top \mathbf{G}_n[X_N](\mathbf{w})^{-1} \mathbf{y}_n(\mathbf{x}_i) + \lambda(w_i - v_i)/v_i; \\
H_{i,j} &= \mathrm{Tr}[ \mathbf{G}_n[X_N]({\mathbf{w}})^{-1} \mathbf{y}_n(\mathbf{x}_i)\mathbf{y}_n(\mathbf{x}_i)^\top\mathbf{G}_n[X_N]({\mathbf{w}})^{-1} \mathbf{y}_n(\mathbf{x}_j)\mathbf{y}_n(\mathbf{x}_j)^\top] + \nabla_{w_iw_j}^2 [(\lambda/2)\|\mathbf{w}\|_{\mathcal{V}, 2}^2]\\
&= [\mathbf{y}_n(\mathbf{x}_i)^\top \mathbf{G}_n[X_N](\mathbf{w})^{-1} \mathbf{y}_n(\mathbf{x}_j)]^2 + \lambda \delta_{i,j}/v_i,
\end{align*}
where $\delta_{i,j}$ is the Kronecker delta. To evaluate these quadratic forms efficiently, we perform the Cholesky factorization $\mathbf{G}_n[X_N](\mathbf{w}) = \mathbf{L}\mathbf{L}^\top$. By defining the whitened basis matrix $\mathbf{U} := \mathbf{L}^{-1}\mathbf{Y}_n \in \mathbb{R}^{M \times N}$ and the transformed Gram matrix $\mathbf{M} := \mathbf{U}^\top \mathbf{U} \in \mathbb{R}^{N \times N}$, we recover the exact gradient and Hessian entirely via simple matrix multiplications
\begin{subequations} \label{eq:rank-1-procedure}
    \begin{align}
        \mathbf{g} &= -\text{diag}(\mathbf{M}) + \lambda (\mathbf{w} - \mathbf{v}) \oslash \mathbf{v},\\
        \mathbf{H} &= \mathbf{M} \odot \mathbf{M} + \lambda \, \text{diag}( \mathbf{1} \oslash \mathbf{v}),
    \end{align}
\end{subequations}
where $\mathbf{1} \in \mathbb{R}^N$ is the all-one vector, $\odot$ and $\oslash$ denote Hadamard (element-wise) multiplication and division, respectively. 

\begin{remark}[Computational Complexity]
    A generic interior-point solver requires $O(NM^3+N^2M^2)$ floating-point operations (FLOPs) approximately per iteration to assemble the dense Hessian via congruence transformations and trace inner products (e.g., \cite{vandenberghe2010cvxopt}). In contrast, the oracle \eqref{eq:rank-1-procedure} requires $O(N M^2)$ FLOPs to form the Gram matrix $\mathbf{G}_n[X_N](\mathbf{w})$, $O(M^3)$ to carry out the Cholesky factorization of it, $O(NM^2)$ to solve for $\mathbf{U}$, $O(N^2 M)$ FLOPs to form $\mathbf{M}$, and $O(N^2)$ FLOPs to perform the Hadamard product, resulting in a total of $O(M^3+N^2M+NM^2)$ FLOPs per iteration.
\end{remark}

With these fast gradient and Hessian oracles, the $D$-optimal collocation problem \eqref{eq:MZ-collocation} with the choice $$
J_n(\mathbf{w}) = -\log\det \mathbf{G}_n[X_N](\mathbf{w})
$$
can be solved efficiently by a standard infeasible start Newton method with Armijo rule backtracking line search \cite{boyd2004convex}, detailed in \cref{alg:d_optimal}.
\begin{algorithm}[htbp]
\caption{Infeasible Start Newton Method for $D$-optimal Collocation}
\label{alg:d_optimal}
\begin{algorithmic}[1]
\REQUIRE Initial weights $\mathbf{w}^{(0)}$, Voronoi weights $\mathbf{v}$, regularization strength $\lambda > 0$, tolerance $\epsilon > 0$, Armijo parameter $\alpha \in (0,1/2)$, backtracking parameter $\beta \in(0,1)$.
\STATE Initialize $\mathbf{w} \leftarrow \mathbf{w}^{(0)}$, dual variables $\boldsymbol{\nu} \leftarrow \mathbf{0}$.
\REPEAT
\STATE Form Gram matrix $\mathbf{G}_n[X_N](\mathbf{w}) \leftarrow \mathbf{Y}_n \text{diag}(\mathbf{w}) \mathbf{Y}_n^\top$.
\STATE Compute Cholesky factorization $\mathbf{L} \leftarrow \texttt{chol}(\mathbf{G}_n[X_N](\mathbf{w}) )$.
\STATE Compute auxiliary matrices $\mathbf{U} \leftarrow \mathbf{L}^{-1} \mathbf{Y}_n$, $\mathbf{M} \leftarrow \mathbf{U}^\top \mathbf{U}$.
\STATE Compute $\mathbf{g}$ and $\mathbf{H}$ using \eqref{eq:rank-1-procedure}.
\STATE Compute primal and dual Newton steps $(\Delta \mathbf{w}, \Delta \boldsymbol{\nu})$:
\begin{equation*}
    \left[
\begin{matrix} 
    \mathbf{H} & \mathbf{Y}_{n^+}^\top \\ 
    \mathbf{Y}_{n^+} & \mathbf{0} 
\end{matrix} \right]\left[
\begin{matrix} 
    \Delta \mathbf{w} \\ \Delta \boldsymbol{\nu} 
\end{matrix} 
\right] = - \left[
\begin{matrix} 
\mathbf{g} + \mathbf{Y}_{n^+}^\top \boldsymbol{\nu} \\ \mathbf{Y}_{n^+}\mathbf{w} - \mathbf{b}_{n^+} 
\end{matrix} 
\right]
\end{equation*}
\STATE Backtracking line search on $\|\mathbf{r}(\mathbf{w}, \bm{\nu})\|_2 := \|(\mathbf{g}+\mathbf{Y}_{n^+}^\top \bm{\nu}, \mathbf{Y}_{n^+}\mathbf{w} - \mathbf{b}_{n^+})\|_2$
\begin{itemize}[noitemsep, topsep=0pt]
    \item[] $t := 1$.
    \item[] \textbf{while} $\|\mathbf{r}(\mathbf{w} + t\Delta\mathbf{w}, \bm{\nu} + t\Delta\bm{\nu})\|_2 > (1 - \alpha t)\|\mathbf{r}(\mathbf{w}, \bm{\nu})\|_2$ or $\mathbf{G}_n[X_N](\mathbf{w} + t\Delta\mathbf{w}) \not\succ \mathbf{0}$
    \item[] \quad\quad $t := \beta t$.
    \item[] Update $\mathbf{w} \gets \mathbf{w} + t \Delta\mathbf{w}$, $\bm{\nu} \gets \bm{\nu} + t \Delta\bm{\nu}$.
\end{itemize}
\UNTIL{$\mathbf{Y}_{n^+} \mathbf{w} = \mathbf{b}_{n^+}$ and $\|\mathbf{r}(\mathbf{w}, \bm{\nu})\|_2 \leq \epsilon$}
\end{algorithmic}
\end{algorithm}

The sum-of-rank-one structure can also be exploited in the spectral collocation \eqref{eq:spectral-collocation}, though two barrier matrices are needed for the linear matrix inequality. A path-following interior-point method \cite{vandenberghe1998determinant} could be applied, but solving the resulting barrier subproblems is more expensive than the $D$-optimal surrogate. We omit the details for brevity.
\section{Numerical Experiments} \label{sec:num_exp}
In this section, we numerically validate the efficiency of the proposed optimization approach to weight collocation by the fundamental approximation tasks of numerical integration and hyperinterpolation. All experiments are implemented in MATLAB, and the codes are available on \href{https://github.com/HansEtherious/weights}{GitHub}\footnote{https://github.com/HansEtherious/weights}. The optimization models are formulated via the parser-solver CVX \cite{cvx,gb08} and solved by MOSEK \cite{mosek}, with the exception of our customized interior-point algorithm for the MZ collocation models \eqref{eq:MZ-collocation}. All computations are executed on a Lenovo laptop equipped with an Intel(R) Core(TM) i9-12900H (2.50GHz).

To evaluate our methods across distinct geometric regimes, we benchmark performance on two contrasting spatial distributions: 
\begin{itemize}
    \item \textit{Halton Points} \cite{halton1960efficiency}: A synthetic low-discrepancy sequence generated via the following area-preserving mapping from a 2D Halton sequence $(u_{j,1}, u_{j,2})_{j=1}^N$ in the unit square:
    \begin{equation*}
    \theta_j = \arccos(2u_{j,1} - 1), \quad \phi_j = 2\pi u_{j,2}.
    \end{equation*}
    While Halton points exhibit a relatively small mesh norm $h_{X_N}$, their generation does not enforce a minimum spacing, which allows them to be clustered arbitrarily closely. This yields a tiny separation distance $q_{X_N}$.
    \item \textit{MAGSAT Points} \cite{langel1982results}: A challenging real-world dataset comprising sequential measurements collected by the MAGSAT satellite along its orbit, kindly provided to us by Prof. Alvise Sommariva. From the full trajectory of 10,443 recorded locations, we extract $N$ points sampled uniformly in time. Because the satellite's polar orbit leaves systematic longitudinal gaps at the equator, this dataset suffers from significant spatial clustering at the poles and coverage blind spots at the equator, manifesting as a larger mesh norm $h_{X_N}$.
\end{itemize}

The structural disparity between the Halton and MAGSAT points is exemplified in \cref{fig:point_sets}, where we tracked the mesh norm $h_{X_N}$ and the separation distance $q_{X_N}$ across varying sample sizes $N$. As the number of points increases, the MAGSAT points consistently exhibit a larger mesh norm, while the Halton points maintain a tighter mesh norm but possess substantially smaller separation distances. To provide an intuitive visualization of these geometries, the actual spatial distributions on the sphere are visualized for a representative $N = 1,024$ in \cref{fig:3d-view,fig:2d-view}.

\begin{figure}[htbp]
    \centering
    \includegraphics[width=0.65\linewidth]{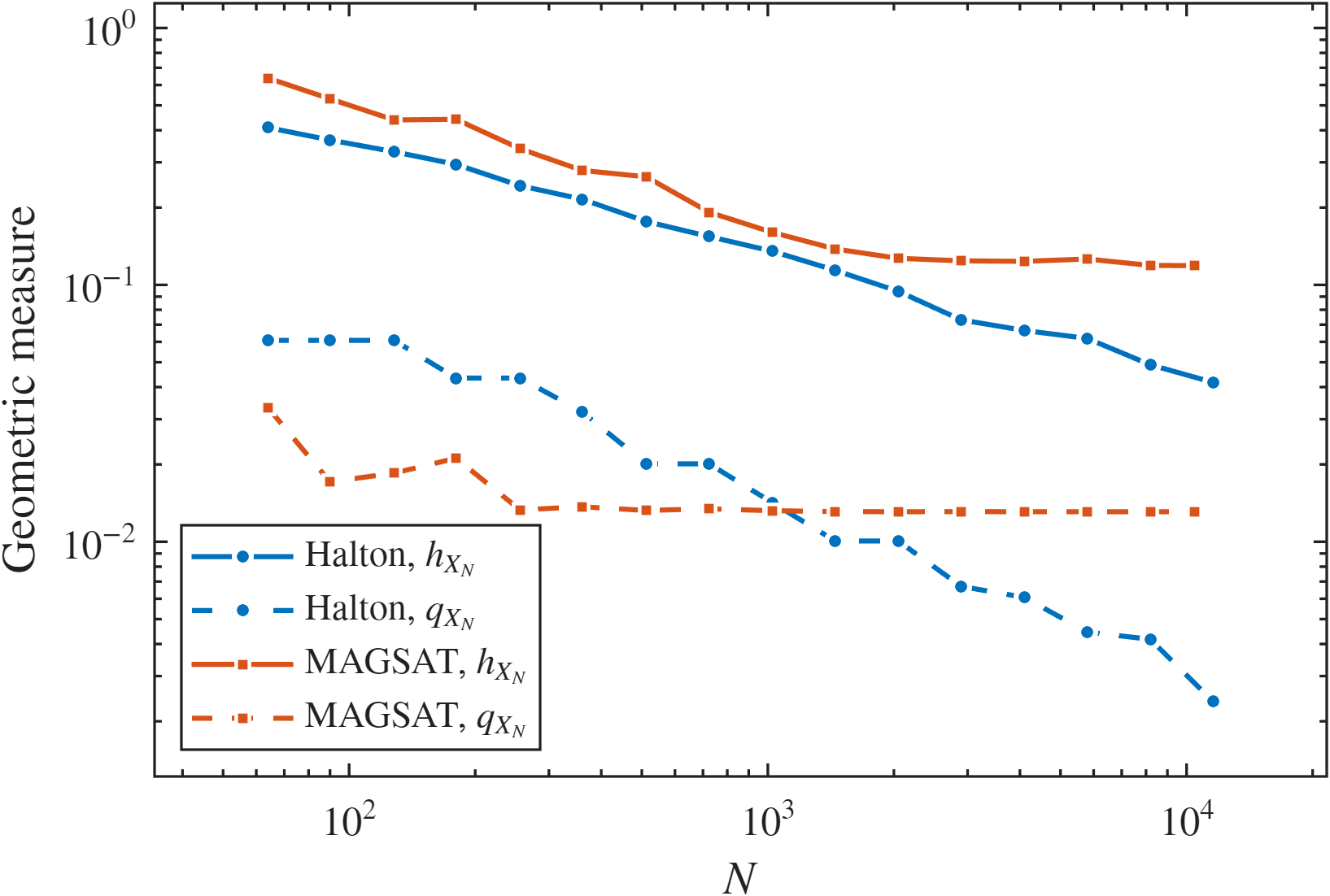}
    \caption{Mesh norm $h_{X_N}$ (solid lines) and separation distance $q_{X_N}$ (dashed lines) 
    versus cardinality $N$ for Halton and MAGSAT points used in the numerical experiments.}
    \label{fig:point_sets}
\end{figure}

\begin{figure}[htbp]
    \centering
    \begin{subfigure}[t]{0.45\textwidth}
        \centering
        \includegraphics[width=\textwidth]{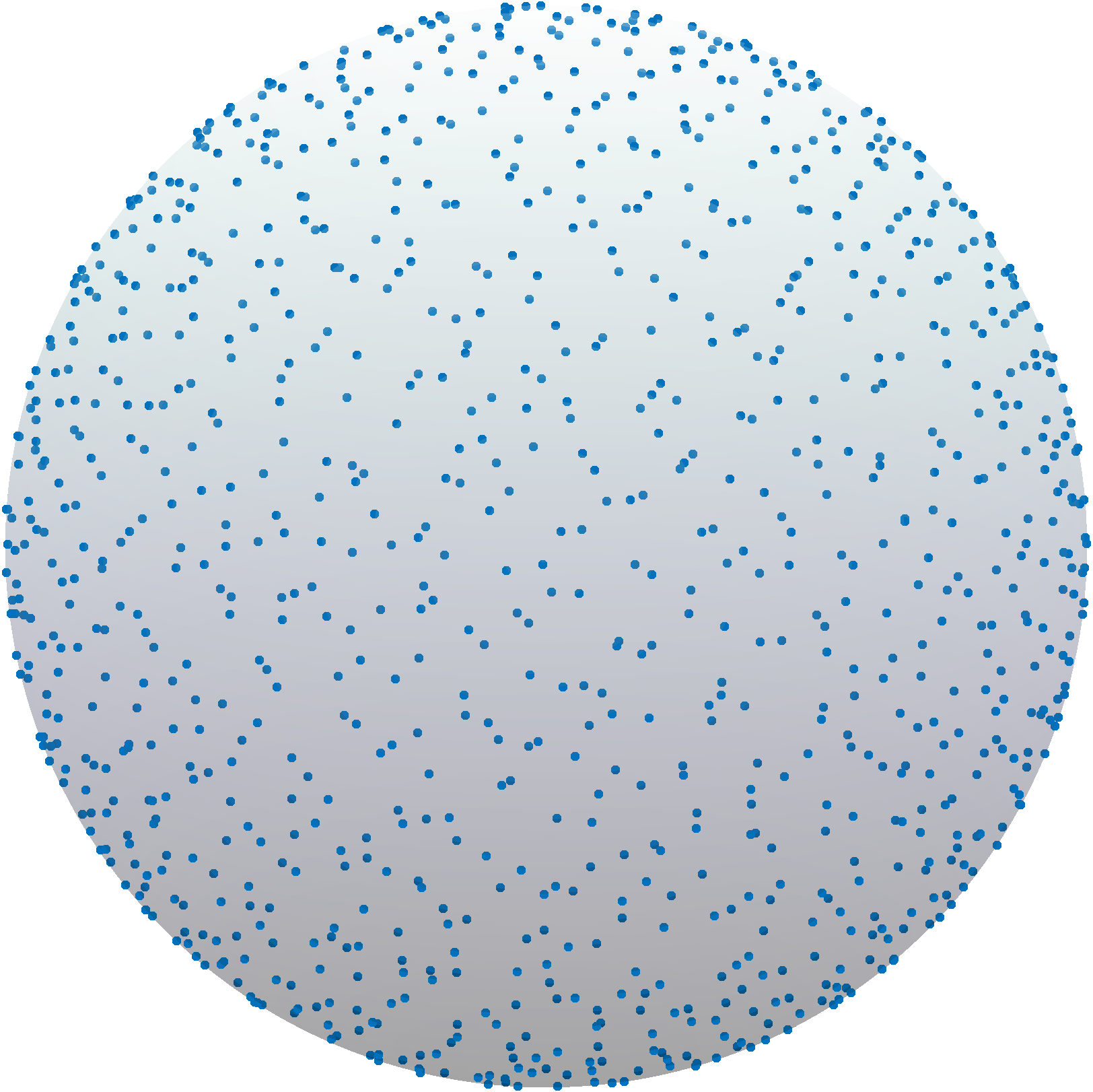}
        \caption{Halton points.}
    \end{subfigure}~\hspace{2mm}~
    \begin{subfigure}[t]{0.45\textwidth}
        \centering
        \includegraphics[width=\textwidth]{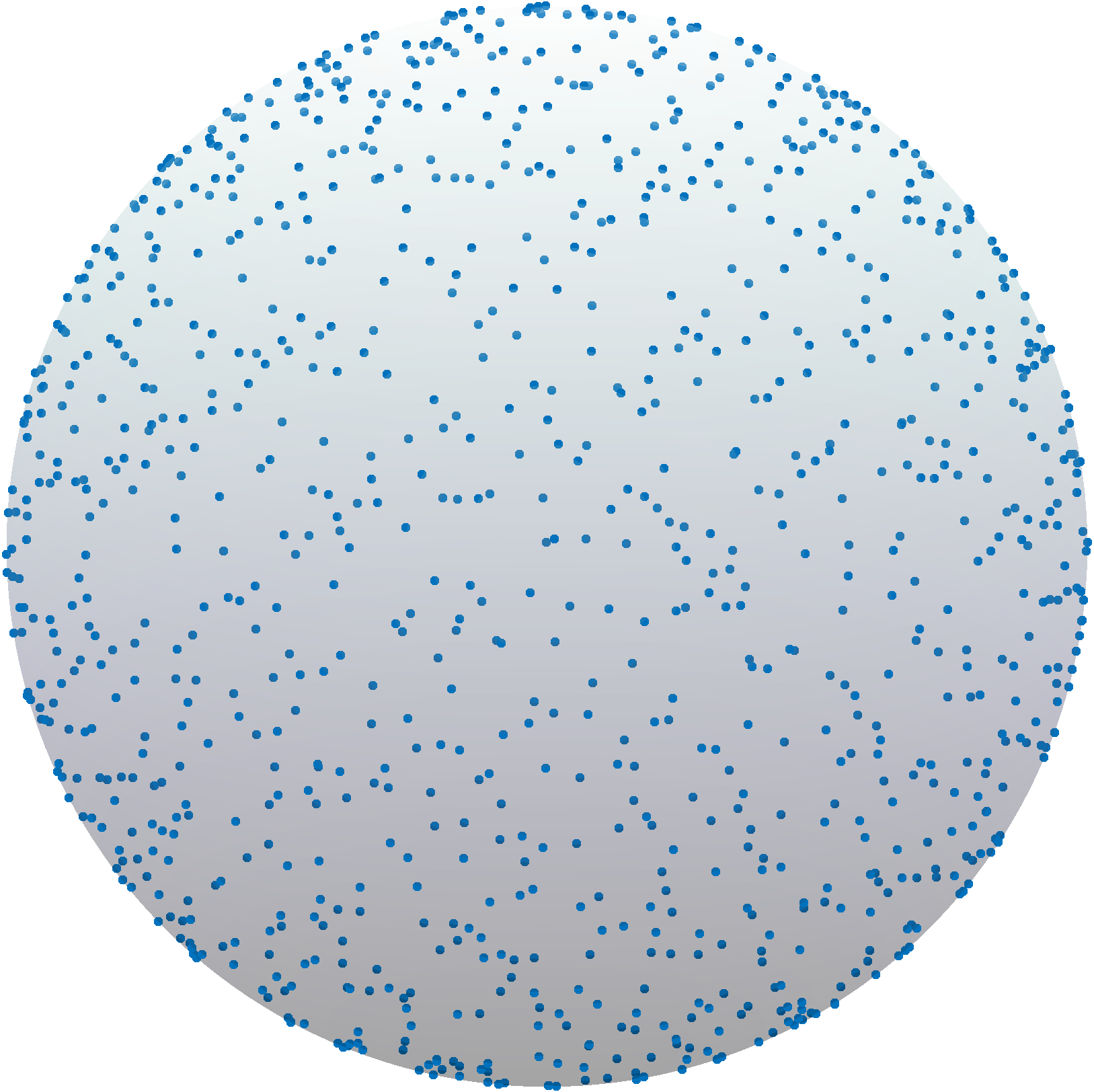}
        \caption{MAGSAT points.}
    \end{subfigure}
    \caption{$N = 1,024$ Halton and MAGSAT points on the unit sphere.}
    \label{fig:3d-view}
\end{figure}

\begin{figure}[htbp]
    \centering
    \begin{subfigure}[t]{0.45\textwidth}
        \centering
        \includegraphics[width=\textwidth]{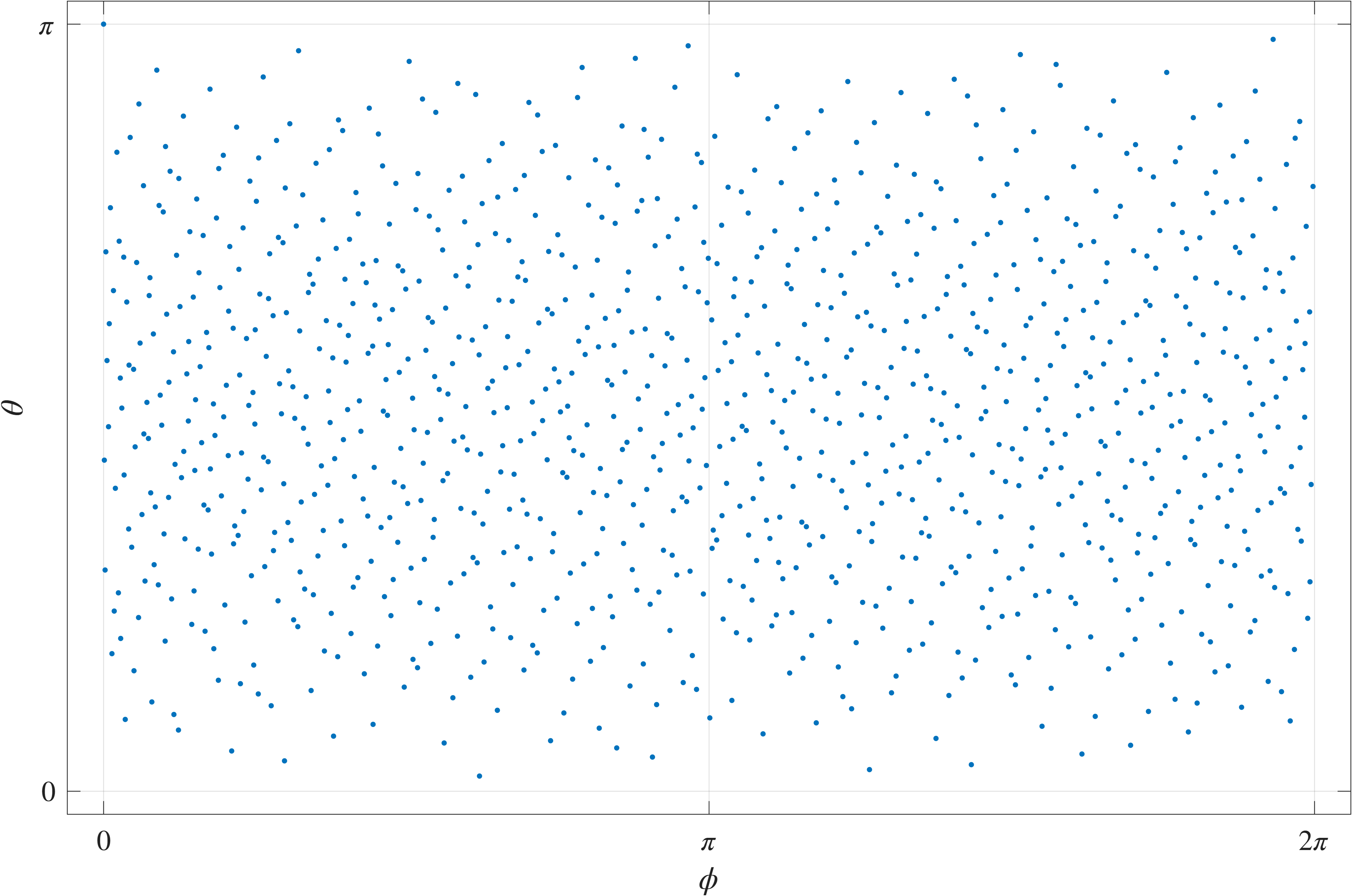}
        \caption{Halton points.}
    \end{subfigure}~\hspace{2mm}~
    \begin{subfigure}[t]{0.45\textwidth}
        \centering
        \includegraphics[width=\textwidth]{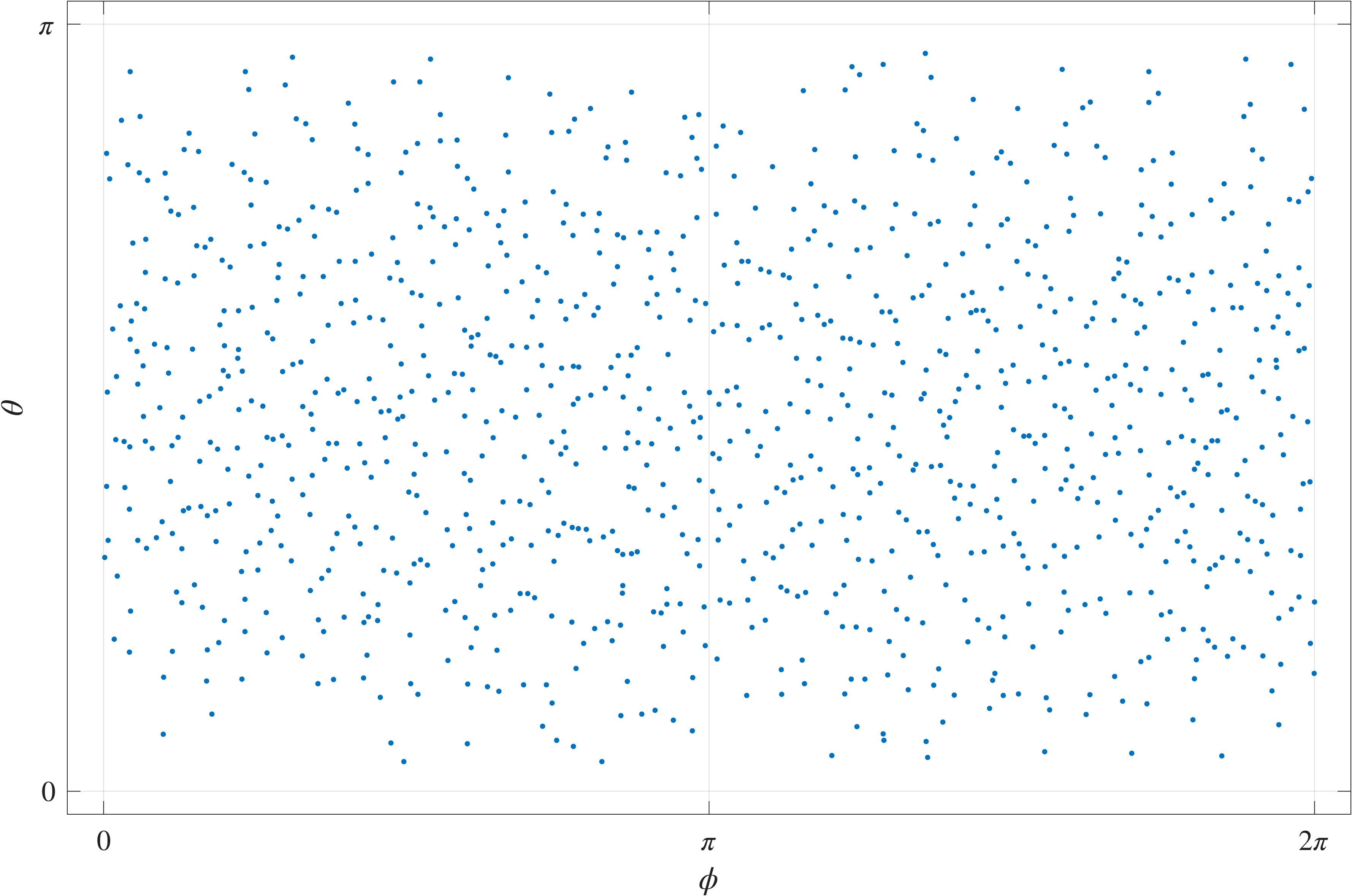}
        \caption{MAGSAT points.}
    \end{subfigure}
    \caption{Node configurations of $N = 1,024$ Halton and MAGSAT points in spherical coordinates.}
    \label{fig:2d-view}
\end{figure}

As discussed in \Cref{sec:framework}, to ensure a fair comparison among approximation methodologies, all collocation methods are benchmarked at the same level of exactness, specified by the critical degree $n^+$ \eqref{eq:critical-exactness}: the maximum degree supported by the spatial distribution of $X_N$ with strictly positive weights. The degree $n^+$ intrinsically reflects the geometric capacity of the scattered sites, and it scales very differently for our two datasets. 

\newpage
To acquire the critical degree $n^+$, we solve the least-squares problem with the constraints $\mathbf{Y}_n\mathbf{w} = \mathbf{b}_n$ and $\mathbf{w} \geq \mathbf{0}$, performing bisections until we find the maximum $n$ that still permits a feasible solution. \cref{fig:n_plus} illustrates the scaling of this critical degree $n^+$ as the number of points $N$ increases for both datasets. We observe that Halton points consistently support a higher $n^+$ than MAGSAT points. The latter prematurely exhaust the linear independence of the basis matrix $\mathbf{Y}_n$, forcing $n^+$ to be a much lower threshold. Since the exactness constraints are fixed at the critical degree $n^+$, the performance differences stem from the choice of the optimization objective rather than algebraic exactness.

\begin{figure}[htbp]
    \centering
    \includegraphics[width=0.65\textwidth]{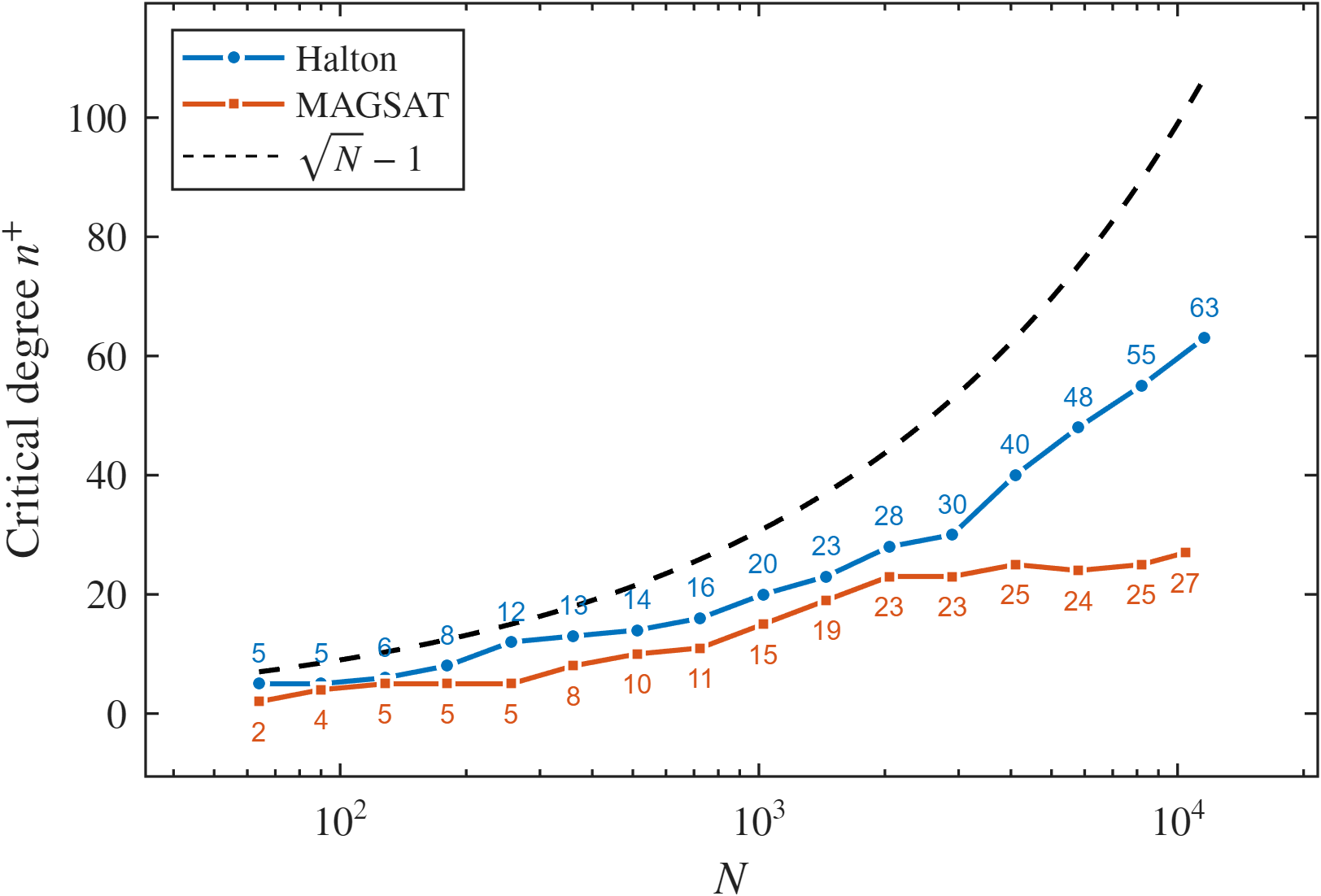}
    \caption{Scaling of the critical degree $n^+$ against the number of points $N$.}
    \label{fig:n_plus}
\end{figure}

\subsection{Validation of Stability}

Before evaluating the numerical performance for standard approximation tasks, we first empirically validate the theoretical assertions governing the stability of our optimization models. Specifically, we examine the spectral behavior of the discrepancy matrix governing the kernel framework (cf. \Cref{sec:spectral-stability}) and the deterioration of the MZ constants governing hyperinterpolation stability (cf. \Cref{sec:perturbation}).

\subsubsection{Conditioning of the Discrepancy Matrix}
\label{sec:conditioning}
In the discrepancy collocation \eqref{eq:discrepancy-collocation}, numerical stability is intrinsically tied to the conditioning of the matrix $\mathbf{K}_s$. Our spectral analysis determines two primary asymptotic behaviors driven by the functional smoothness index $s$: exponential growth of the condition number $\mathrm{cond}(\mathbf{K}_s)$ as $s \to \infty$ (cf. \cref{thm:conditioning}), and a degeneration into $\ell_2$-minimization as $s \to 0^+$ (cf. \Cref{sec:asymptotic-recovery}).

To verify the exponential growth, we compute the extremal eigenvalues $\lambda_1$ and $\lambda_N$, and the resulting condition number of $\mathbf{K}_s$ across a discrete grid of smoothness parameters $s \in [0.05, 1.95]$. For the low-smoothness cases ($0 \leq s \leq 1)$, we employ the truncated discrepancy matrix $\mathbf{K}_s^L$. To guarantee the positive definiteness in this regime, we set the truncation degree as the theoretically justified threshold $L = \lceil 55/q_{X_N} \rceil$ established in \cref{thm:conditioning-truncation}. For the high-smoothness cases ($s > 1$), we utilize the closed-form expression of the generalized distance kernel defined for $1 < s < 2$ (cf. \cref{tab:closed-form-kernels}) to facilitate computation.

The empirical spectral statistics for $N = 1,024$ nodes are plotted in \cref{fig:stability_analysis}. More specifically, \cref{fig:spectral_decay} illustrates the mechanism driving the ill-conditioning: As the assumed smoothness $s$ increases, the extremal eigenvalues diverge, causing a widening of the spectral gap. \cref{fig:condition_growth} provides confirmation of the resulting exponential growth in condition numbers for $s > 1$, in agreement with \cref{thm:conditioning}.
\begin{figure}[htbp]
     \centering
     \begin{subfigure}[b]{0.48\textwidth}
         \centering
         \includegraphics[width=\textwidth]{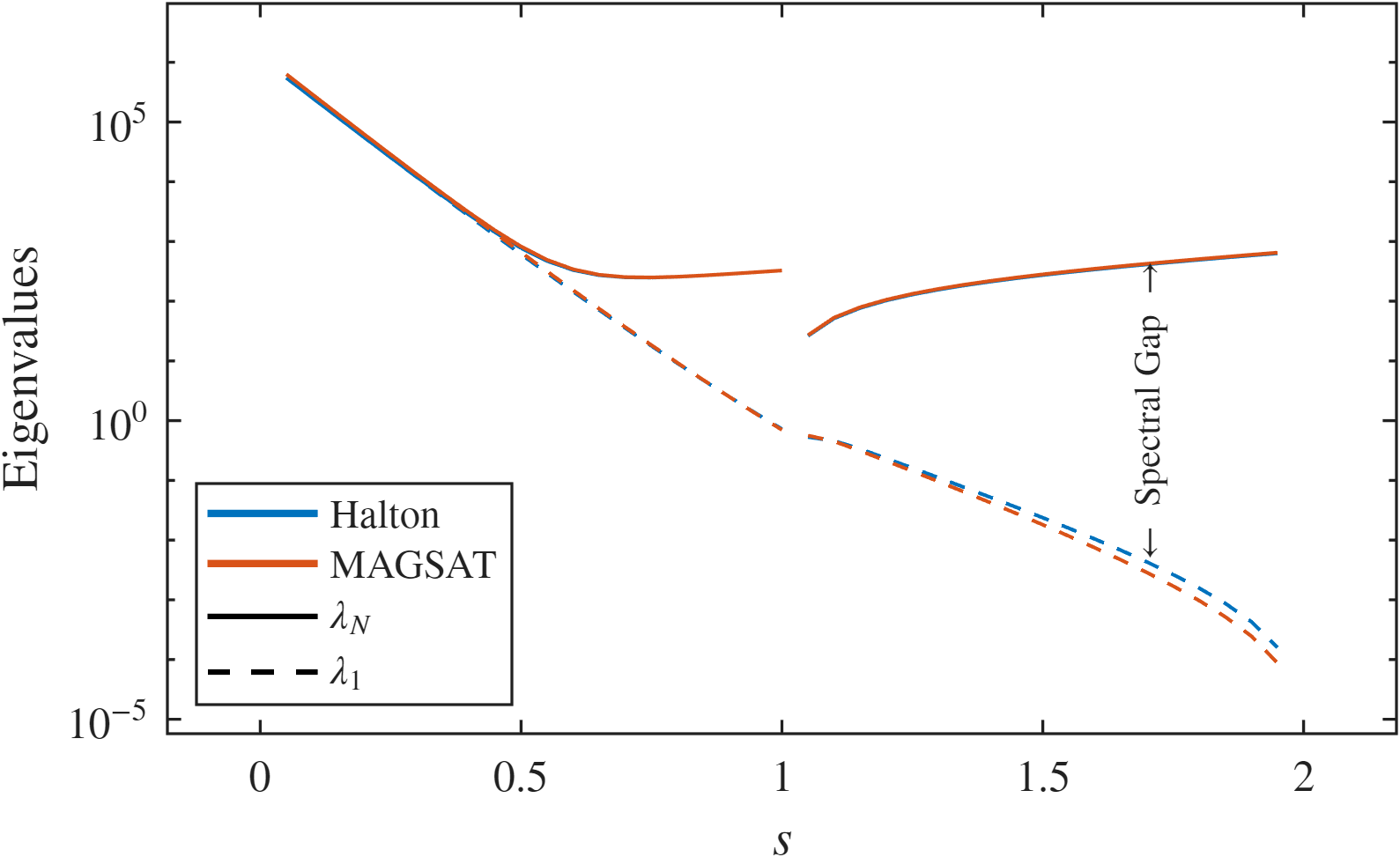}
         \caption{Spectral decay of $\lambda_{\max}$ and $\lambda_{\min}$.}
         \label{fig:spectral_decay}
     \end{subfigure}
     \hfill
     \begin{subfigure}[b]{0.48\textwidth}
         \centering
         \includegraphics[width=\textwidth]{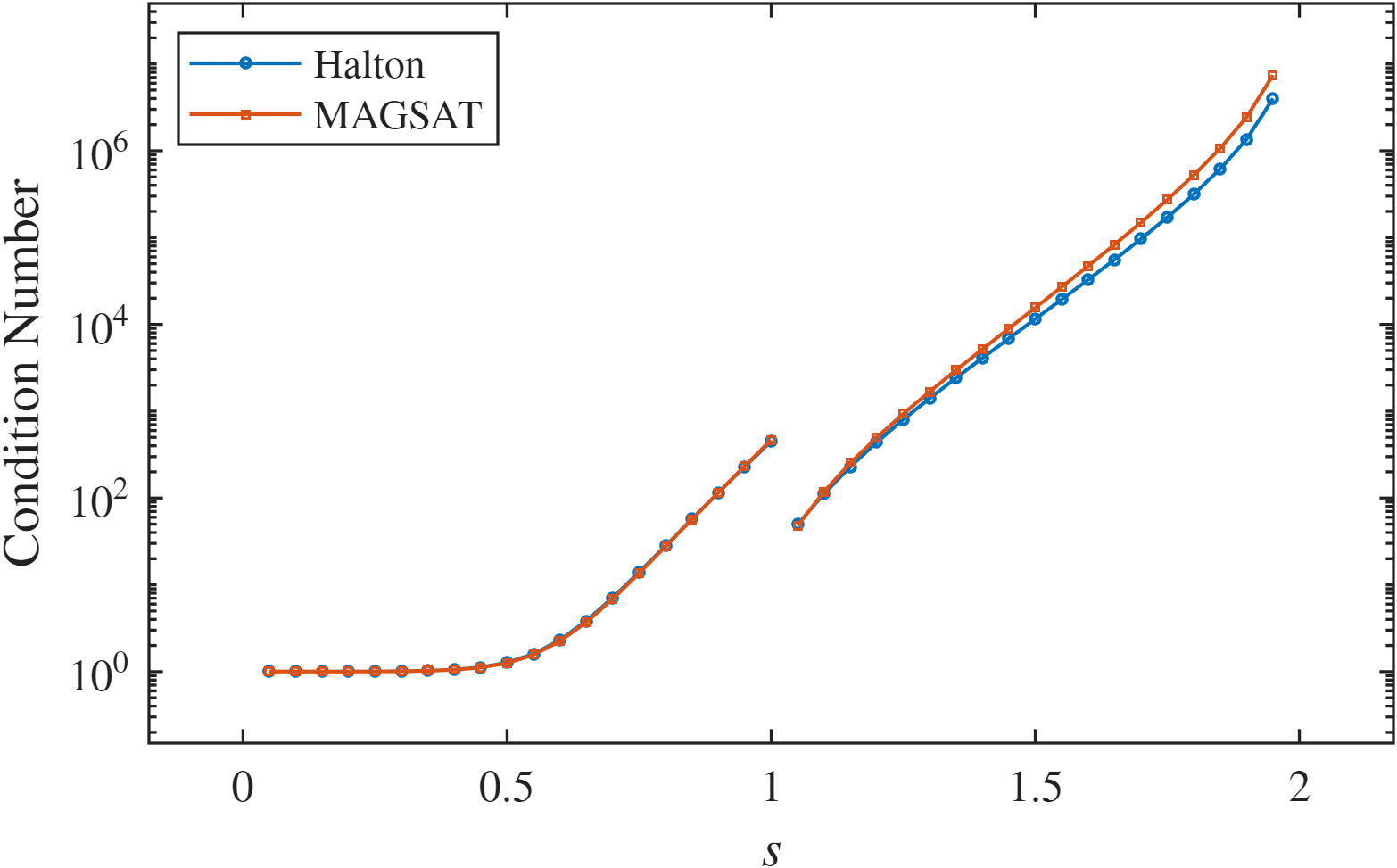}
         \caption{Growth of the condition number.}
         \label{fig:condition_growth}
     \end{subfigure}
     
     \caption{Spectral properties of the discrepancy matrix $\mathbf{K}_s$ for $N = 1,024$ Halton and MAGSAT points.}
     \label{fig:stability_analysis}
\end{figure}

\newpage
Next, we investigate the opposite asymptotic limit as $s \to 0^+$. Let $\mathbf{w}_s$ denote the weights obtained via the discrepancy collocation \eqref{eq:discrepancy-collocation} for a given smoothness $s$, and $\mathbf{w}_{\ell_2}$ the baseline minimum-norm weights obtained via the standard $\ell_2$-minimization \eqref{eq:l2-minimization}. To quantify the limiting behavior in \Cref{sec:asymptotic-recovery}, we track the relative error $\|\mathbf{w}_s - \mathbf{w}_{\ell_2}\|_2/\|\mathbf{w}_{\ell_2}\|_2$. \cref{fig:w-conv} illustrates this metric evaluated on the MAGSAT dataset as $s$ is decreased from $1.95$ down to $0.05$.

\begin{figure}[htbp]
     \centering
     \includegraphics[width=0.6\textwidth]{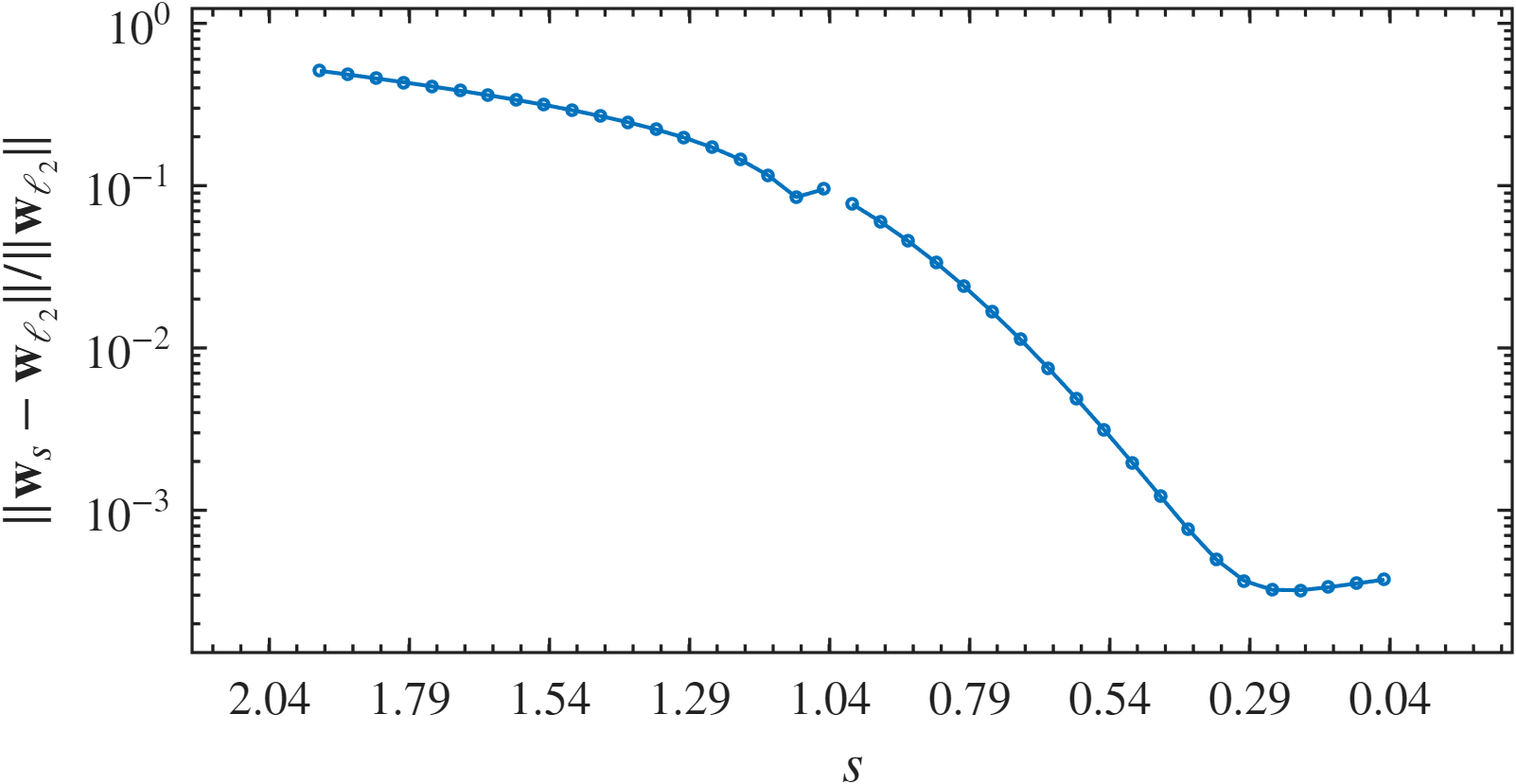}
     \caption{Relative error $\|\mathbf{w}_s - \mathbf{w}_{\ell_2}\|_2 / \|\mathbf{w}_{\ell_2}\|_2$ between the weights $\mathbf{w}_s$ generated by the discrepancy collocation \eqref{eq:discrepancy-collocation} and the $\ell_2$ baseline $\mathbf{w}_{\ell_2}$ \eqref{eq:l2-minimization} on $N=1,024$ MAGSAT points.}
     \label{fig:w-conv}
\end{figure}

As $s$ decreases towards zero, the relative error decays sharply, indicating that the collocated weights converge toward the solution to \eqref{eq:l2-minimization}. The subsequent error plateau near $10^{-4}$ is a numerical artifact: as $s \to 0^+$, the diagonal entries of the truncated discrepancy matrix $\mathbf{K}_s^L$ become large, introducing standard floating-point precision limitations within the optimization solver. Nonetheless, the overall trend is consistent with our theoretical analysis: when functional smoothness is absent, the kernel-based framework defaults to classical $\ell_2$-minimization.

\subsubsection{Effectiveness of the Geometry-Aware Regularizer} \label{sec:efficacy}

Recall that the $\chi^2$-divergence $\|\mathbf{w}\|_{\mathcal{R}, 2}$ is introduced in \Cref{sec:perturbation} as a geometry-aware regularizer to limit the deterioration of MZ constants, when quadrature weights deviate from a good geometric prior (typically chosen as the Voronoi weights $\mathbf{v}$). To illustrate this deterioration, we track the MZ constants $A$, $B$, and $c$, along a linear deformation path $\mathbf{w}(\alpha) = (1-\alpha)\mathbf{v} + \alpha \mathbf{w}_{\ell_2}$ for $\alpha \in [0, 5]$, where $\mathbf{w}_{\ell_2}$ represents the weight obtained from $\ell_2$-minimization \eqref{eq:l2-minimization}. 

\begin{figure}[htbp]
     \centering
     \includegraphics[width=0.5\textwidth]{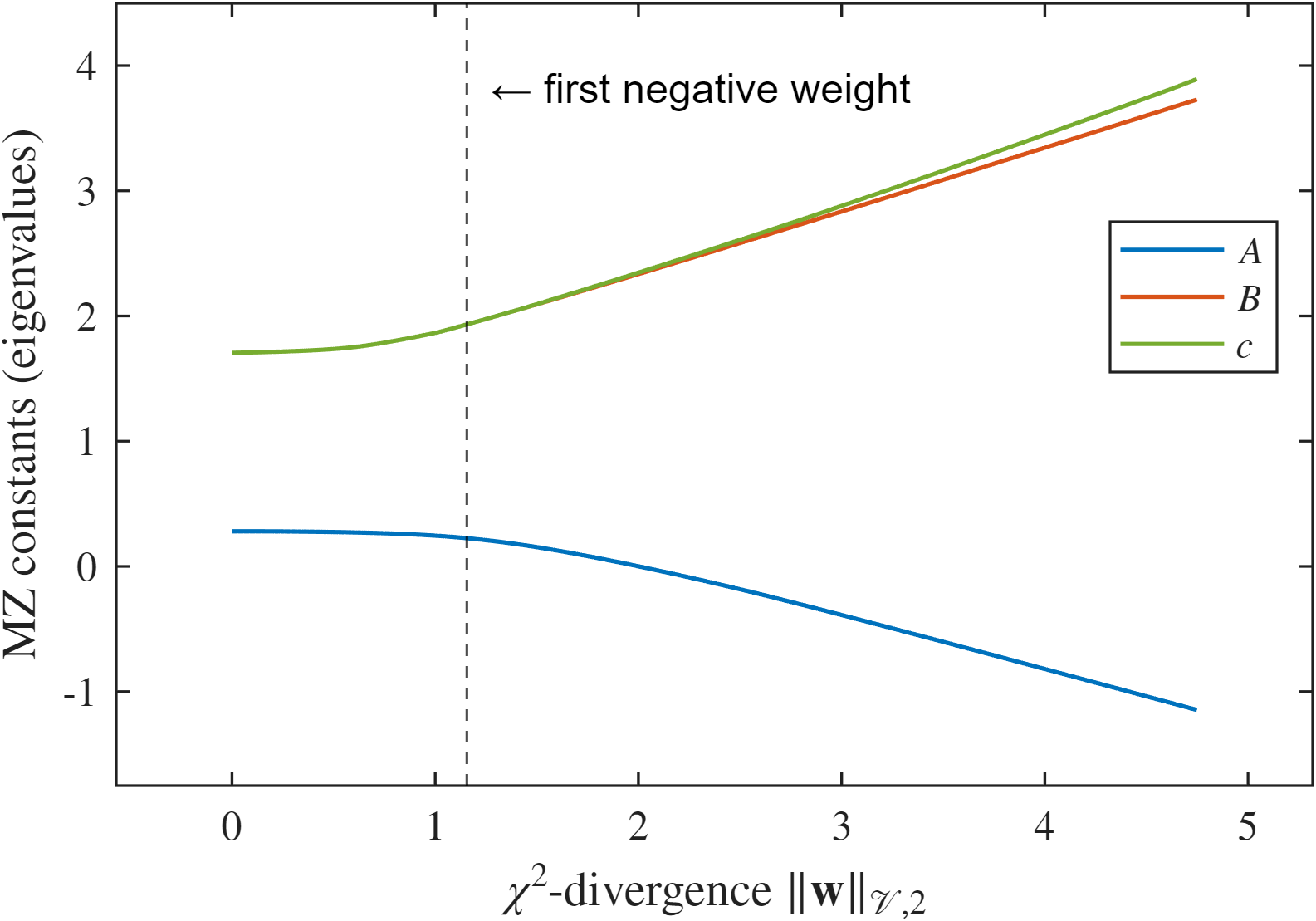}
     \caption{Deterioration of the MZ constants along the deformation path $\mathbf{w}(\alpha) = (1-\alpha)\mathbf{v} + \alpha\mathbf{w}_{\ell_2}$ against the $\chi^2$-divergence $\|\mathbf{w}\|_{\mathcal{V}, 2}$. The vertical dashed line marks the first occurrence of negative weights.}
     \label{fig:mz-deterioration}
\end{figure}

\cref{fig:mz-deterioration} plots the evolution of these MZ constants against the $\chi^2$-divergence $\|\mathbf{w}\|_{\mathcal{\mathcal{V}}, 2}$. As the deviation from the Voronoi prior grows, all constants deteriorate monotonically. As long as the quadrature weights remain strictly non-negative (near $\alpha = 0$), the constant $B$ in the $L^2$ MZ inequality and the constant $c$ in the $L^2$ MZ condition coincide. As the negative weights begin to emerge further along the deformation path, their trajectories diverge. The constant $A$ decline steady linearly. Concurrently, both $B$ and $c$ increase linearly.

It is worth noting that the perturbation bounds derived in \Cref{sec:perturbation} are inherently worst-case guarantees and, as expected, are quantitatively loose. The observed deterioration in \Cref{fig:mz-deterioration} occurs at a rate much slower than what the theoretical bounds indicate. Nevertheless, the numerical results are consistent with our theoretical analysis in the sense that the deterioration of the MZ constants $A$, $B$, and $c$ exhibit a linear relationship with respect to the $\chi^2$-divergence.

To demonstrate the practical effectiveness of the geometry-aware regularizer, we fix $N = 1,024$ and set the degree of the MZ inequalities as $n = 15$, a choice made so that $2n$ is close to $\lfloor\sqrt{N} - 1 \rfloor$. \cref{tab:regularizer_comparison} compares the resulting MZ constants obtained via the $\ell_2$-minimization \eqref{eq:l2-minimization} and those generated by the $\chi^2$-divergence collocation \eqref{eq:voronoi-collocation} with $\mathcal{R}$ chosen as the Voronoi partition $\mathcal{V}$.

\begin{table}[htbp]
\centering
\caption{Comparison of the MZ constants and absolute weight sum $\|\mathbf{w}\|_1$ of the Voronoi weight $\mathbf{v}$, weight obtained by $\ell_2$-minimization \eqref{eq:l2-minimization}, and $\chi^2$-divergence collocation \eqref{eq:voronoi-collocation} with $\mathcal{R}$ chosen as the Voronoi partition $\mathcal{V}$.}
\label{tab:regularizer_comparison}
\begin{tabular}{ccclccccc}
\toprule
Dataset & $n$ & $n^+$ & $P(\mathbf{w})$ & $A$ & $B$ & $c$ & $\eta$ & $\|\mathbf{w}\|_1$ \\
\midrule
\multirow{3}{*}{Halton} & \multirow{3}{*}{1024} & \multirow{3}{*}{20}
& $\mathbf{v}$                        & 0.2806 & 1.7058 & 1.7058 & 0.7194 & 12.5664 \\
\cmidrule{4-9}
& & & $\|\mathbf{w}\|_2$               & 0.2505 & 1.8483 & 1.8483 & 0.8483 & 12.5664 \\
& & & $\|\mathbf{w}\|_{\mathcal{V},2}$ & 0.2558 & 1.7745 & 1.7745 & 0.7745 & 12.5664 \\
\midrule
\multirow{3}{*}{MAGSAT} & \multirow{3}{*}{1024} & \multirow{3}{*}{15}
& $\mathbf{v}$                        & 0.3564 & 1.5808 & 1.5808 & 0.6436 & 12.5664 \\
\cmidrule{4-9}
& & & $\|\mathbf{w}\|_2$               & 0.2737 & 2.0135 & 2.0135 & 1.0135 & 12.5664 \\
& & & $\|\mathbf{w}\|_{\mathcal{V},2}$ & 0.3430 & 1.7844 & 1.7844 & 0.7844 & 12.5664 \\
\bottomrule
\end{tabular}
\end{table}

The results in \cref{tab:regularizer_comparison} demonstrate the effectiveness of the geometric-aware regularizer. Across both datasets, the $\ell_2$ baseline yields the more severe deterioration of the MZ constants, resulting in the larger values of $c$. In contrast, incorporating the geometric regularizer stabilizes $c$, thereby improving the numerical stability of the hyperinterpolation operator. A welcome byproduct of the regularizer derived from the perturbation bound \cref{prop:perturb-III} is that the accuracy constant $\eta$ is also consistently reduced, yielding better approximation quality for polynomials.

\subsection{Performance in Numerical Integration} \label{sec:integration}
We next evaluate the empirical performance of all considered collocation methods on the fundamental task of numerical integration over the sphere. To assess the accuracy of these quadratures on functions with distinct spectral characteristics, we select two generic test functions from \cite{renka1988multivariate}:
\begin{align*}
  f_1(x, y, z) & := 0.75\exp(-(9x-2)^2/4 - (9y-2)^2/4 - (9z-2)^2/4) \\
                 & \quad + 0.75 \exp(-(9x+1)^2/49 - (9y+1)/10 - (9z+1)/10) \\
                 & \quad + 0.5 \exp(-(9x-7)^2/4 - (9y-3)^2/4 - (9z-5)^2/4) \\
                 & \quad - 0.2 \exp(-(9x-4)^2 - (9y-7)^2 - (9z-5)^2), & \mathbf{x} = (x,y,z)^\top \in \mathbb{S}^2.\\
  f_2(x, y, z) & := (1+\tanh(-9x-9y+9z))/9, & \mathbf{x} = (x,y,z)^\top \in \mathbb{S}^2.
\end{align*}
\cref{fig:sph-coeff-int} plots the magnitude of the spherical harmonic coefficients $|\hat{f}_{\ell, k}|$ of the two functions for $\ell$ up to $128$. Both $f_1$ and $f_2$ are $C^\infty$ functions. While the Franke function $f_1$ exhibits an exponential decay of its spherical harmonic coefficients, the coefficients for the function $f_2$ decay at a much slower rate.

\begin{figure}[htbp]
    \centering
    \begin{subfigure}[b]{0.49\textwidth}
        \centering
        \includegraphics[width=\textwidth]{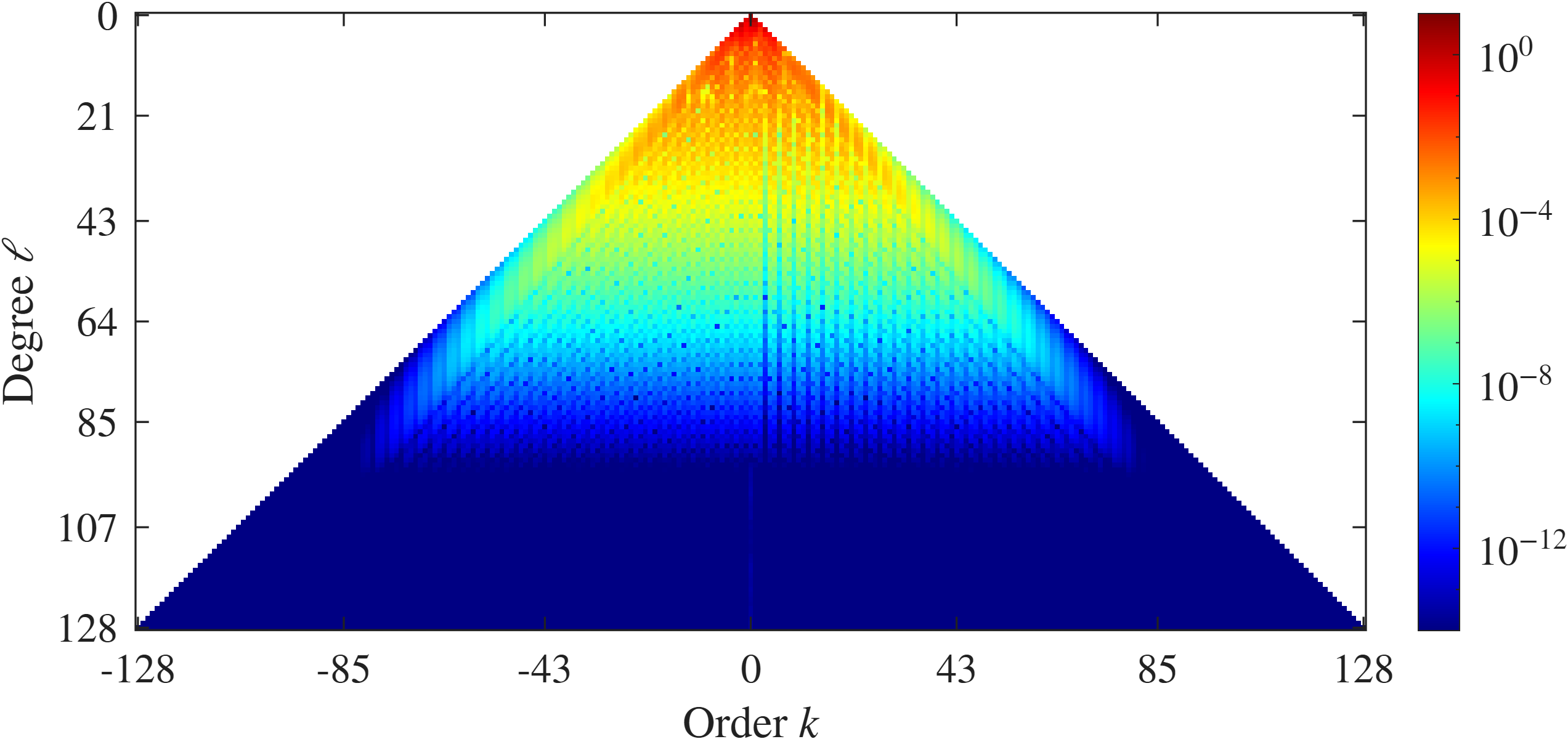}
        \caption{The Franke function $f_1$}
    \end{subfigure}
    \hfill
    \begin{subfigure}[b]{0.49\textwidth}
        \centering
        \includegraphics[width=\textwidth]{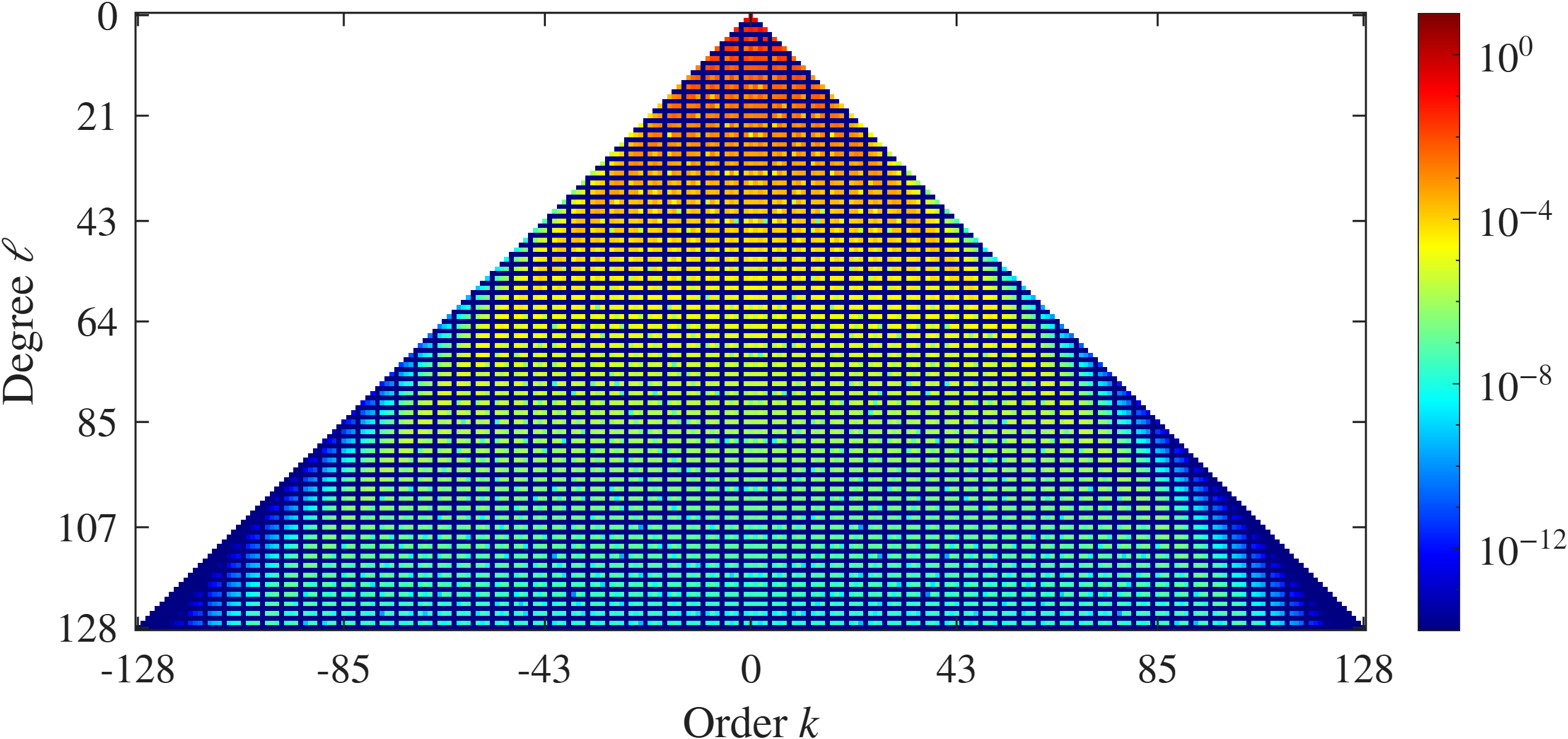}
        \caption{The function $f_2$}
        \label{fig:coeff_f2}
    \end{subfigure}
    \caption{Magnitudes of the spherical harmonic coefficients $|\hat{f}_{\ell,k}|$ for degrees $\ell = 0, \ldots, 128$.}
    \label{fig:sph-coeff-int}
\end{figure}

Note that the integrals of these functions over the unit sphere evaluate to:
\begin{align*}
    I(f_1) & = \int_{\mathbb{S}^2} f_1(\mathbf{x})\,d\omega(\mathbf{x}) \approx 9.95474164740918, \\
    I(f_2) & = \int_{\mathbb{S}^2} f_2(\mathbf{x})\,d\omega(\mathbf{x}) = 4\pi/9.
\end{align*}
To benchmark our collocation methodologies, we utilize the following two classical quadratures as baselines for both the Halton and MAGSAT point sets:
\begin{itemize}[noitemsep]
    \item The first is the purely geometric Voronoi weight $\mathbf{v}$ (Vor), generated directly from the Voronoi partition $\mathcal{V}$. It gives a quadrature exact of degree $0$.
    \item The second baseline ($\ell_2^+$) represents the classical pursuit of positive weights, which is defined as the positive minimum-norm solution to the exactness system
\begin{align} \label{eq:l2-minimization-positive}
    \begin{aligned}
        \min \quad & \|\mathbf{w}\|_2^2 \\
        \text{s.t.} \quad & \mathbf{Y}_{n^+} \mathbf{w} = \mathbf{b}_{n^+}, \quad \mathbf{w} \geq \mathbf{0},
    \end{aligned}
\end{align}
where $n^+$ is the critical degree defined in \eqref{eq:critical-exactness}.
\end{itemize}

We compare these classical baselines with our three proposed kernel collocation methodologies. To ensure a fair comparison, all optimization models are constrained to the critical degree $n^+$:
\begin{itemize}[noitemsep]
    \item \textit{Cui and Freeden discrepancy collocation (CF)}: Formulated via \eqref{eq:cui-freeden-discrepancy-collocation}, representing the smooth regime ($s = 1.5$) with the exact closed-form logarithmic kernel.
    \item \textit{Truncated discrepancy collocation (TD)}: Formulated via \eqref{eq:truncated-kernel-collocation}, targeting a smoother space ($s = 2.5$). To guarantee the positive definiteness of the discrepancy matrix, the series is truncated at the theoretical threshold $L = \lceil 55/q_{X_N}\rceil$ derived in \cref{thm:conditioning-truncation};
    \item \textit{Bandlimited collocation (BL)}: Formulated via \eqref{eq:bandlimited-collocation}, engineered for highly clustered sites. We set the recovery bandwidth as a moderate $L = \lceil 0.75 N \rceil$ and utilize the $\ell_2$-regularizer $R(\mathbf{w}) = \frac{1}{2}\|\mathbf{w}\|_2^2$ with a regularization strength of $\lambda = 0.01$.
\end{itemize}

\cref{tab:quadrature_integration} summarizes the resulting weight vectors for a representative node count of $N = 8,192$. Our kernel collocation methods produce signed quadrature weights, but variance of the weights and the overall $\|\mathbf{w}\|_1$ remains controlled. Note also how the tiny $q_{X_N}$ of Halton points affects the truncation level of (TD). This necessitates the consideration of (BL) for highly clustered scattered sets.

\begin{table}[htbp]
\centering
\caption{Weight statistics for $N = 8,192$ Halton and MAGSAT point sets for numerical integration. In this table, $L$ is the truncation level for (TD) and (BL). The columns also report the minimum (min) and maximum (max) weight, the absolute sum $\|\mathbf{w}\|_1$, variance (var), number of positive weights (pos).}
\label{tab:quadrature_integration}
\begin{adjustbox}{max width=\textwidth}
\begin{tabular}{ccccccccc}
\toprule
Point & Quadrature & $n^+$ & $L$ & min & max & $\|\mathbf{w}\|_1$ & var & pos \\ \midrule
\multirow{5}{*}{Halton} & Vor & $0$ & - & \phantom{-}4.3186e-04 & \phantom{-}3.0574e-03 & 12.5664 & 1.3966e-07 & 8192 \\
 & $\ell_2^+$ & $55$ & - & \phantom{-}1.5180e-04 & \phantom{-}3.4838e-03 & 12.5664 & 2.0278e-07 & 8192 \\
 & CF & $55$ & - & -9.4273e-05 & \phantom{-}3.6096e-03 & 12.5666 & 2.7870e-07 & 8191 \\
 & TD & $55$ & $13185$ & -2.6225e-03 & \phantom{-}4.6479e-03 & 12.9238 & 6.8950e-07 & 7894 \\
 & BL & $55$ & $68$ & -1.6878e-03 & \phantom{-}4.5766e-03 & 12.7028 & 6.6674e-07 & 7981 \\ \midrule
\multirow{5}{*}{MAGSAT} & Vor & $0$ & - & \phantom{-}7.3265e-04 & \phantom{-}6.9502e-03 & 12.5664 & 2.2788e-07 & 8192 \\
 & $\ell_2^+$ & $25$ & - & \phantom{-}8.0965e-05 & \phantom{-}5.6067e-03 & 12.5664 & 1.5921e-07 & 8192 \\
 & CF & $25$ & - & -1.0338e-03 & \phantom{-}1.1095e-02 & 12.5876 & 3.8534e-07 & 8168 \\
 & TD & $25$ & $4202$ & -1.2372e-02 & \phantom{-}2.2455e-02 & 13.1143 & 1.4428e-06 & 7982 \\
 & BL & $25$ & $68$ & -7.5313e-03 & \phantom{-}1.2793e-02 & 13.2511 & 1.4799e-06 & 7649 \\ \bottomrule
\end{tabular}
\end{adjustbox}
\end{table}

\Cref{fig:integration_convergence} reports the relative integration error $e(Q,f) := |Q[X_N,\mathbf{w}](f) - I(f)|/|I(f)|$ across varying sample sizes $N$ for both test functions and datasets. We observe notable improvement of (CF), (TD), and (BL) over the simple (Vor) and ($\ell_2^+$) baselines. This improvement is measurable on the Halton points, but it becomes significant on the MAGSAT dataset. This disparity aligns with our theoretical expectations. Because Halton points inherently form a low-discrepancy sequence, they are closer to quadrature optimality, offering limited gains for discrepancy optimization. However, the performance on the heavily clustered MAGSAT points, particularly for the wider-spectrum function $f_2$, is particularly revealing. In this geometrically challenging regime, our kernel collocation methods (especially (TD) and (BL)) outperform both classical baselines by one to two orders of magnitude.

\newpage
\begin{figure}[htbp]
    \centering
    \begin{subfigure}[b]{0.48\textwidth}
        \centering
        \includegraphics[width=\textwidth]{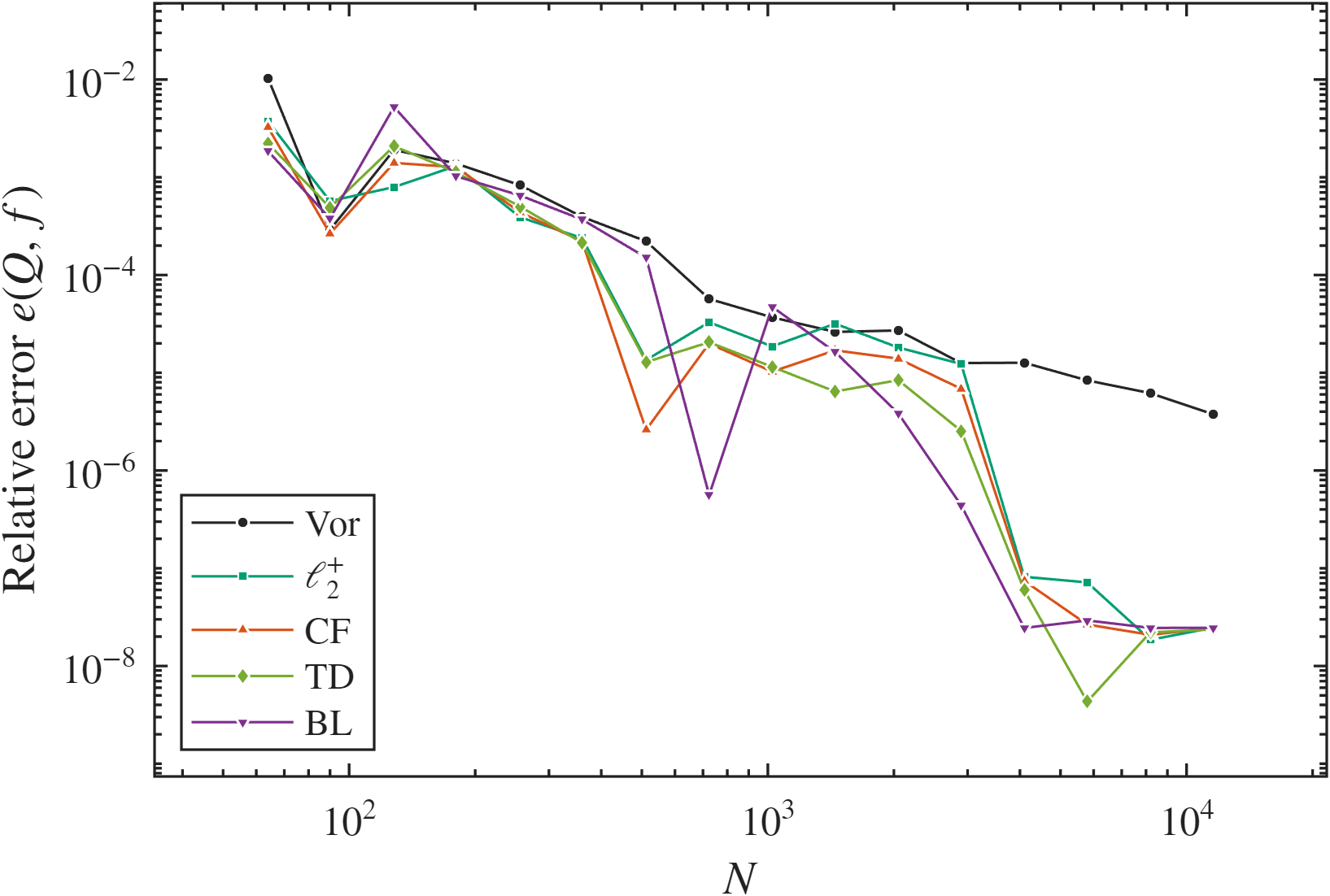}
        \caption{$f_1$, Halton.}
        \label{fig:integration_franke_halton}
    \end{subfigure}
    \hfill
    \begin{subfigure}[b]{0.48\textwidth}
        \centering
        \includegraphics[width=\textwidth]{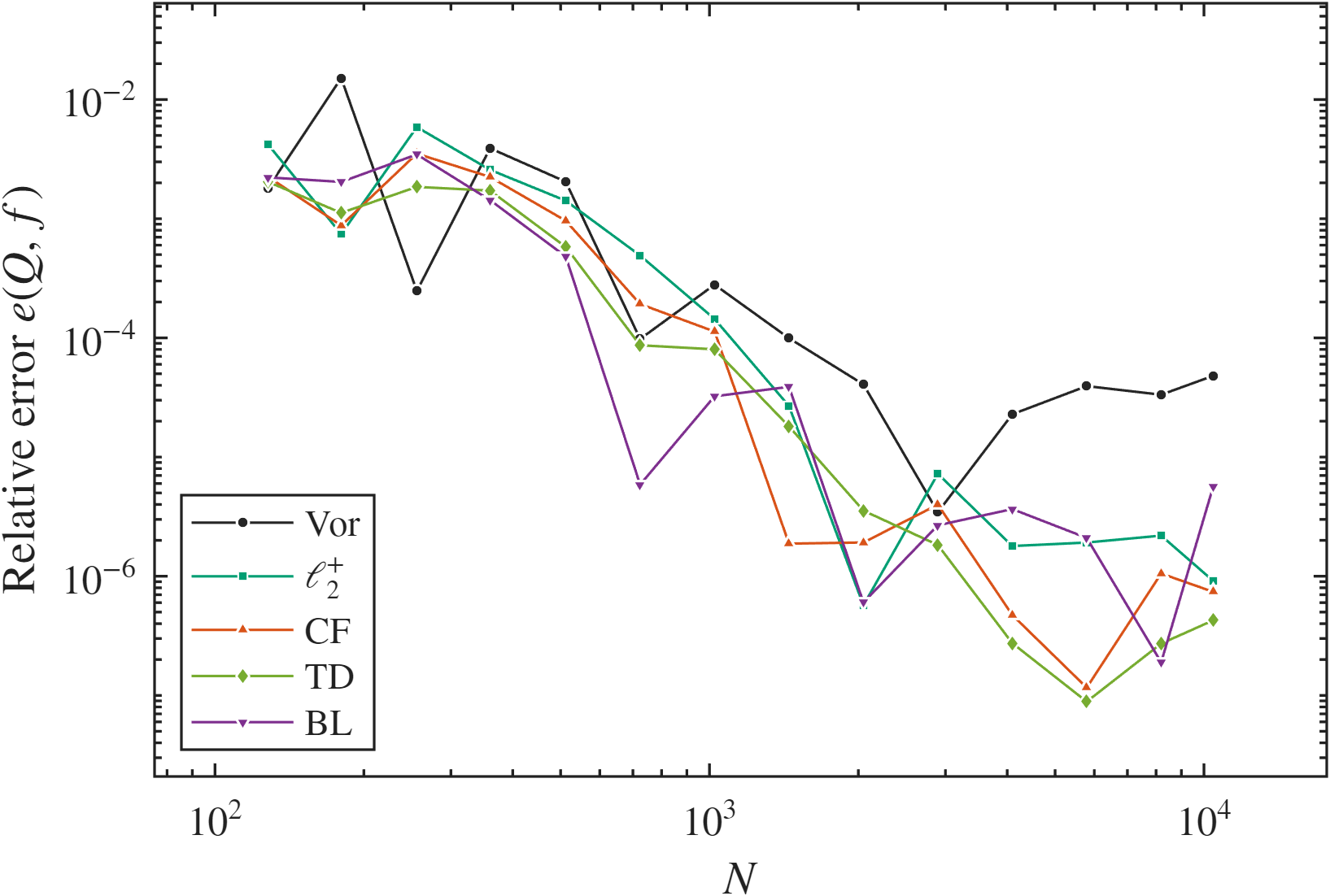}
        \caption{$f_1$, MAGSAT.}
        \label{fig:integration_franke_magsat}
    \end{subfigure}
    \vskip0.75em
    \begin{subfigure}[b]{0.48\textwidth}
        \centering
        \includegraphics[width=\textwidth]{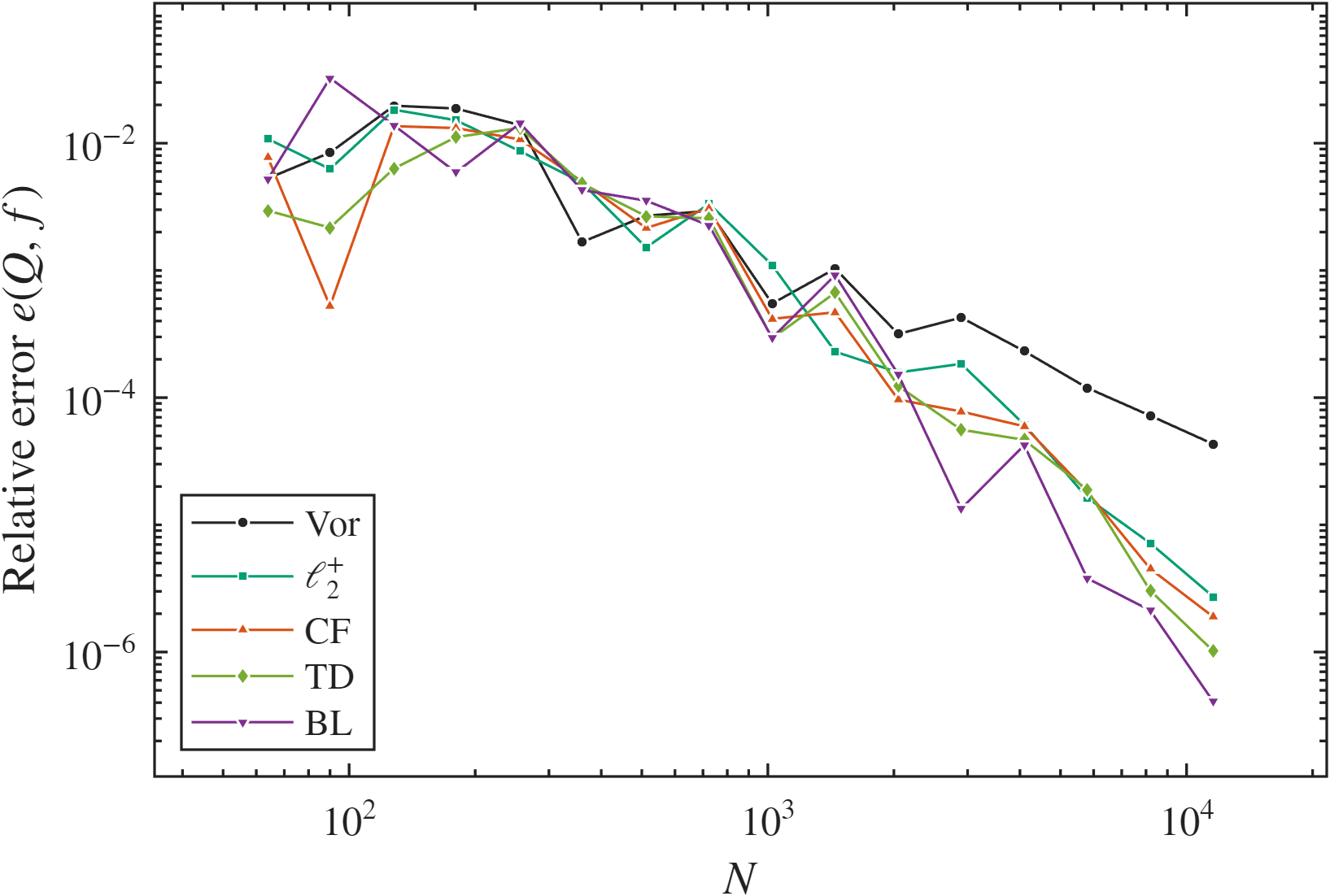}
        \caption{$f_2$, Halton.}
        \label{fig:integration_renka2_halton}
    \end{subfigure}
    \hfill
    \begin{subfigure}[b]{0.48\textwidth}
        \centering
        \includegraphics[width=\textwidth]{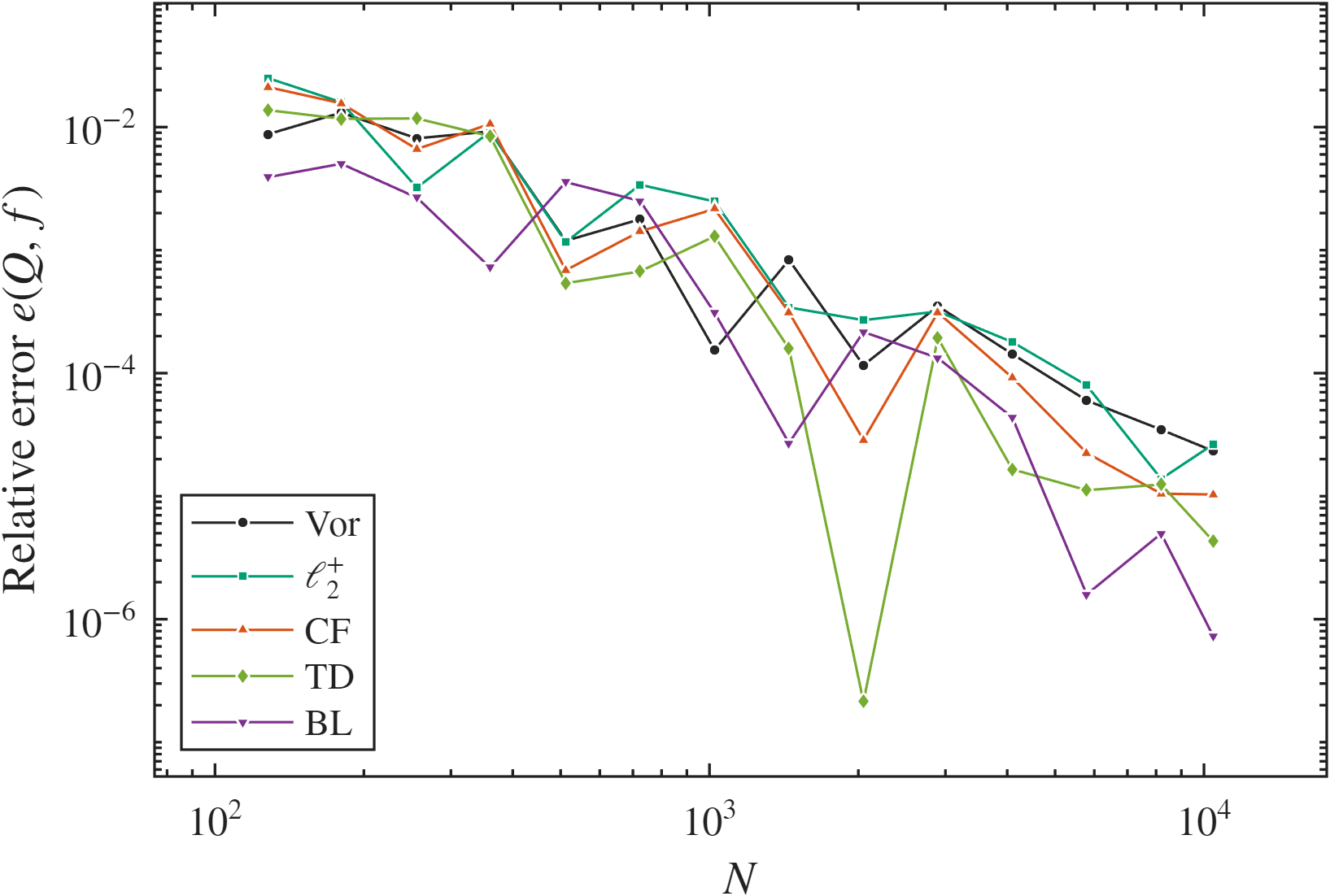}
        \caption{$f_2$, MAGSAT.}
        \label{fig:integration_renka2_magsat}
    \end{subfigure}
    \caption{Convergence of the relative integration error $e(Q,f)$ as a function of the number of points $N$ on Halton and MAGSAT datasets for test functions $f_1$ and $f_2$.}
    \label{fig:integration_convergence}
\end{figure}

These experimental results also reveal that (TD) utilizing $s = 2.5$ frequently outperforms the exact closed-form formulation (CF) utilizing $s = 1.5$ when integrating the infinitely smooth $C^\infty$ functions $f_1$ and $f_2$. This empirical observation motivates the investigation of how the chosen smoothness index $s$ controls the integration accuracy and solver stability of kernel collocation.

\subsubsection{Sensitivity in \texorpdfstring{$s$}{s}}
Consider the (TD) model on $N = 2,048$ Halton points. To observe the effect of $s$ more accurately, the exactness is fixed at the critical degree $n^+$, and the truncation level is enforced at the theoretical positive-definite threshold $L=\lceil 55/q_{X_N}\rceil$. We solve \eqref{eq:truncated-kernel-collocation} across a uniform grid of $s \in [1.1,3.5]$ and evaluate the relative integration error $e(Q, f)$ on the smooth Franke function $f_1$. 

\cref{fig:td_s_tradeoff} compares the theoretical conditioning bounds with the empirical integration errors. More specifically, \cref{fig:td_s_error} demonstrates that increasing $s$ yields a steady improvement in integration accuracy, as the optimization model targets increasingly smoother Sobolev spaces that better match the rapid spectral decay of $f_1$. However, this analytical benefit eventually saturates, and the error increases notably when numerical instability becomes significant. \cref{fig:td_s_condition} confirms the mechanism: as $s$ increases, the condition number $\mathrm{cond}(\mathbf{K}_s^L)$ grows at an exponential rate, in agreement with the theoretical upper bound in \cref{thm:conditioning}. In parallel, the severe ill-conditioning of the discrepancy matrix induces an inflation of the absolute weight sum $\|\mathbf{w}\|_1$.

\begin{figure}[htbp]
    \centering
    \begin{subfigure}[t]{0.42\textwidth}
        \centering
        \includegraphics[width=\textwidth]{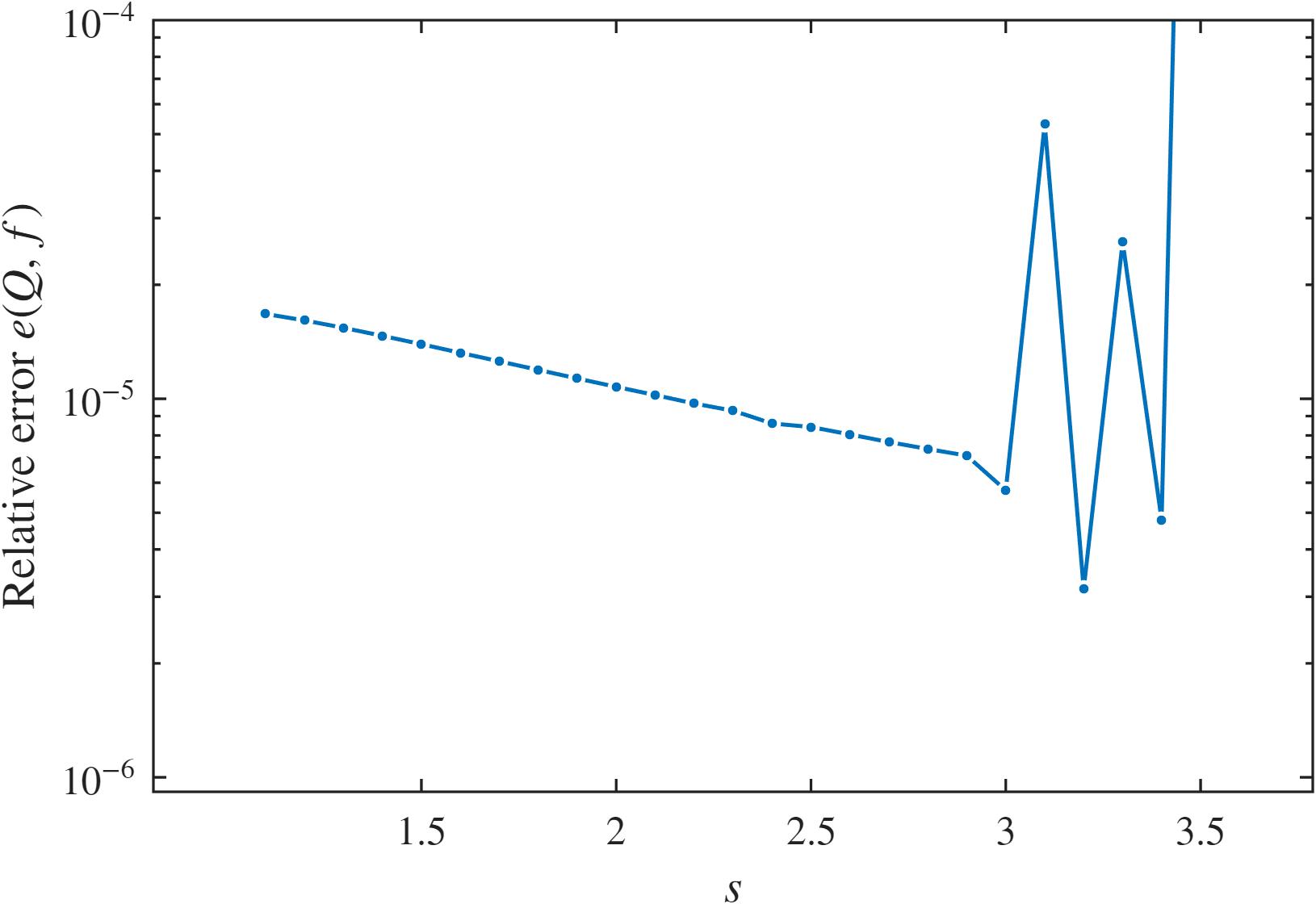}
        \caption{The relative integration error against the smoothness index $s$.}
        \label{fig:td_s_error}
    \end{subfigure}
    ~\hfill~
    \begin{subfigure}[t]{0.47\textwidth}
        \centering
        \includegraphics[width=\textwidth]{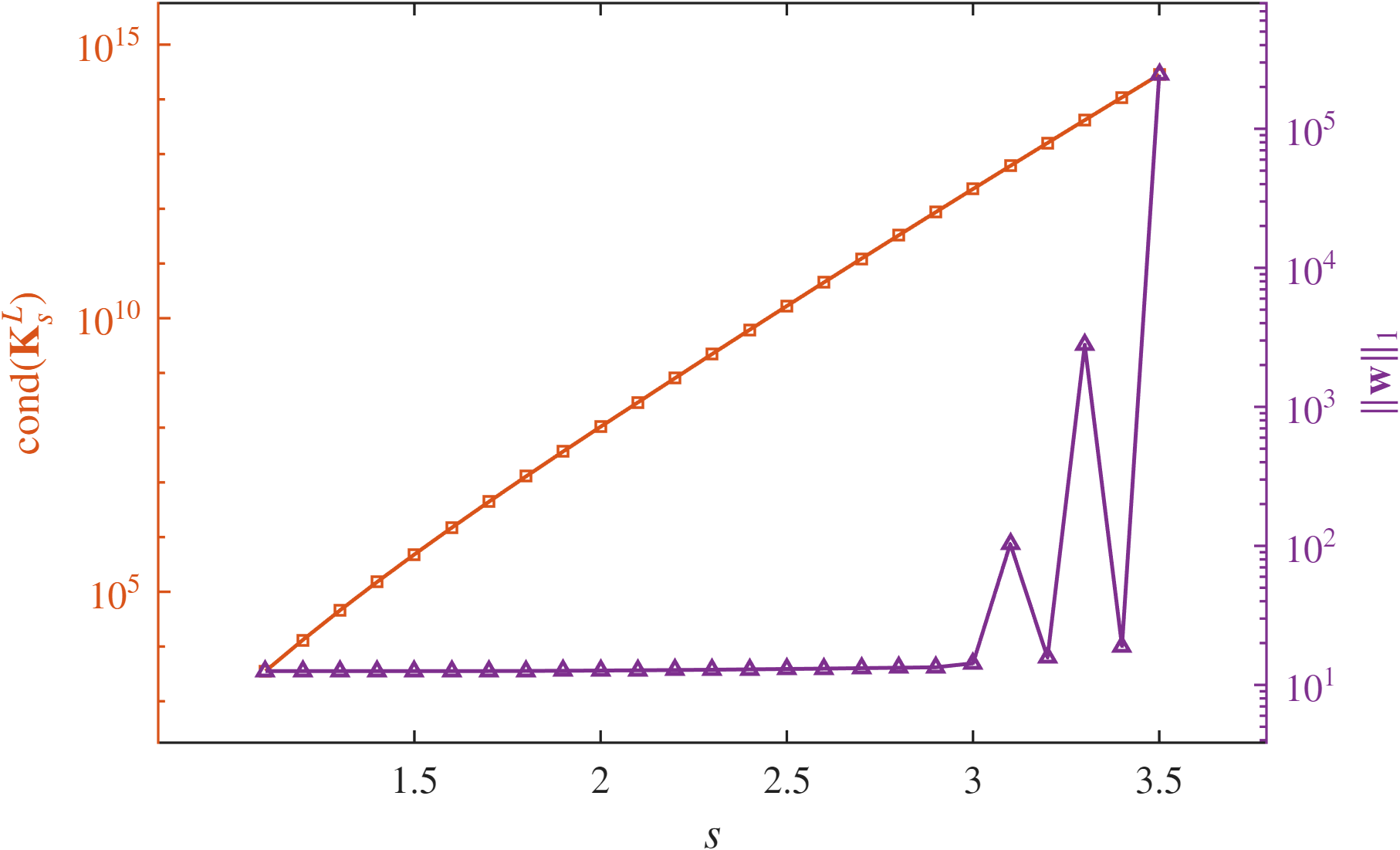}
        \caption{The condition number of the truncated discrepancy matrix $\mathrm{cond}(\mathbf{K}_s^L)$ and $\|\mathbf{w}\|_1$ against $s$.}
        \label{fig:td_s_condition}
    \end{subfigure}
    \caption{Sensitivity of the truncated discrepancy collocation (TD) to the smoothness index $s$. Evaluated on $N = 2,048$ Halton points, with exactness fixed at $n^+$ and truncation at $L=\lceil 54/q_{X_N}\rceil$.}
    \label{fig:td_s_tradeoff}
\end{figure}

These results are consistent with the observation in \cref{rem:exponential-conditioning} and \Cref{sec:conditioning}. Quadrature design inherently benefits from targeting smoother Sobolev spaces ($\mathbb{H}^s(\mathbb{S}^2)$ with larger $s$), provided that the target function possesses sufficient regularity. However, as the conditioning of the discrepancy matrix degrades exponentially, solver precision is compromised, and the quadrature performance degrades substantially. Therefore, $s$ should be chosen \textit{moderately} in the sense that it should be large enough to effectively capture the smoothness of the integrand, but small enough to prevent the exponential growth of ill-conditioning.

\subsection{Performance in Hyperinterpolation} \label{sec:hyperinterpolation}
Our second evaluation targets the accuracy and stability of the hyperinterpolation operator. We utilize the same geometric (Vor) and strictly positive ($\ell_2^+$) baseline quadratures as defined in \Cref{sec:integration}. In this experiment, we compare these baselines with our MZ collocation models, derived from the decomposition of hyperinterpolation error \eqref{eq:hyperinterpolation-error-decomposition}. Specifically, we consider \eqref{eq:MZ-collocation} with two choices of $J_n(\mathbf{w})$ as follows:
\begin{itemize}[noitemsep]
    \item \textit{Spectral Collocation (Spec)}: Utilizing the exact spectral norm $J_n(\mathbf{w}) = \|\mathbf{I} - \mathbf{G}_n[X_N](\mathbf{w})\|_2$, solved via
    a customized primal fixed-reduction path-following interior point method (cf. \cite{vandenberghe1998determinant}) with a regularization strength of $\lambda = 0.001$.
    \item \textit{$D$-optimal Collocation ($D$-opt)}: Utilizing the $D$-optimal surrogate $J_n(\mathbf{w}) = -\log\det \mathbf{G}_n[X_N](\mathbf{w})$, solved via the infeasible start Newton method (cf. \cref{alg:d_optimal}) with a regularization strength of $\lambda = 0.01$.
\end{itemize}

\newpage
To ensure a fair comparison, we again constrain all non-Voronoi quadratures to be exact of degree $n^+$. The degree $n$ of the target $L^2$ MZ inequality is chosen so that $2n$ is close to the limit $\lfloor \sqrt{N} - 1 \rfloor$. \cref{tab:quadrature_hyperinterpolation} provides comparisons of the computed quadrature weights and their resulting MZ constants for $N = 2,048$. The weights collocated by (Spec) and ($D$-opt) achieve smaller $\eta$ at the cost of introducing negative weights and larger variance. Nevertheless, the regularization effectively prevents the total weight norm $\|\mathbf{w}\|_1$ and $c$ from becoming excessively large.

\begin{table}[htbp]
\centering
\setlength{\tabcolsep}{2.5pt}
\caption{Weight statistics for $N = 2,048$ Halton and MAGSAT point sets for hyperinterpolation. In this table, $n$ denotes the degree of the target $L^2$ MZ inequality. The columns also report the minimum (min) and maximum (max) weight, the absolute sum $\|\mathbf{w}\|_1$, variance (var), number of positive weights (pos), MZ constants $A$, $B$, and $c$, and $\eta$ as in \eqref{eq:eta}.}
\label{tab:quadrature_hyperinterpolation}
\begin{adjustbox}{max width=\textwidth}
\begin{tabular}{ccccccccccccc}
\toprule
Point & $n$ & Quadrature & $n^+$ & min & max & $\|\mathbf{w}\|_1$ & var & pos & $c$ & $A$ & $B$ & $\eta$ \\ \midrule
\multirow{4}{*}{Halton} & \multirow{4}{*}{$22$} & Vor & $0$ & \phantom{-}1.64e-03 & \phantom{-}1.25e-02 & 12.57 & 2.22e-06 & 2048 & 1.3621 & 0.5577 & 1.3621 & 0.4423 \\
 & & $\ell_2^+$ & $28$ & \phantom{-}7.91e-05 & \phantom{-}1.48e-02 & 12.57 & 4.87e-06 & 2048 & 1.5036 & 0.5189 & 1.5036 & 0.5036 \\
 & & Spec & $28$ & -5.33e-02 & \phantom{-}5.21e-02 & 19.41 & 1.01e-04 & 1579 & 6.7280 & 0.8856 & 1.1144 & 0.1144 \\
 & & $D$-opt & $28$ & -4.15e-02 & \phantom{-}4.17e-02 & 18.56 & 8.51e-05 & 1602 & 5.2044 & 0.8094 & 1.1836 & 0.1906 \\ \midrule
\multirow{4}{*}{MAGSAT} & \multirow{4}{*}{$21$} & Vor & $0$ & \phantom{-}1.57e-03 & \phantom{-}1.42e-02 & 12.57 & 3.62e-06 & 2048 & 1.3984 & 0.2996 & 1.3984 & 0.7004 \\
 & & $\ell_2^+$ & $23$ & \phantom{-}4.05e-04 & \phantom{-}1.67e-02 & 12.57 & 4.20e-06 & 2048 & 1.6902 & 0.2438 & 1.6902 & 0.7562 \\
 & & Spec & $23$ & -1.26e-01 & \phantom{-}1.23e-01 & 23.48 & 2.34e-04 & 1520 & 17.3447 & 0.7908 & 1.2092 & 0.2092 \\
 & & $D$-opt & $23$ & -8.16e-02 & \phantom{-}7.30e-02 & 21.10 & 1.38e-04 & 1515 & 8.6353 & 0.5446 & 1.3669 & 0.4554 \\ \bottomrule
\end{tabular}
\end{adjustbox}
\end{table}

We evaluate the performance of the resulting hyperinterpolation operators on three distinct target functions. The first function is a smooth $C^\infty$ function adapted from Renka \cite{renka1988multivariate}
\begin{equation*}
    f_3(\mathbf{x}) := \frac{1}{3} \exp \left( -\frac{81}{16} \left( (x - 0.5)^2 + (y - 0.5)^2 + (z - 0.5)^2 \right) \right), \quad \mathbf{x} = (x,y,z)^\top \in \mathbb{S}^2.
\end{equation*}
The second is an oscillatory zonal plane wave
\begin{equation*}
    f_4(\mathbf{x}) := \cos(13 \langle \mathbf{p}, \mathbf{x}\rangle), \quad \mathbf{x} = (x,y,z)^\top \in \mathbb{S}^2,
\end{equation*}
where the defining pole $\mathbf{p}:=(1,2,3)^\top/\sqrt{14}$. The third function possesses lower global regularity
\begin{equation*}
    f_5(\mathbf{x}) = \left| \frac{x^2 - \cos(5xy)/2 + \sin(5z)}{2}\right|^3, \quad \mathbf{x} = (x,y,z)^\top \in \mathbb{S}^2.
\end{equation*}

The numerical performance of hyperinterpolation, utilizing the quadrature weights summarized in \cref{tab:quadrature_hyperinterpolation}, is visualized in \cref{fig:hyper_halton,fig:hyper_magsat}. The numerical results show that the quadratures (Spec) and ($D$-opt) consistently yield smoother and more accurate approximants than both the geometric Voronoi and $\ell_2^+$ baselines. It is also noted that negative weights introduced by the proposed optimization approach do not degrade performance, while they indeed facilitate superior spatial approximation.

\begin{figure}[htbp]
    \centering
    \begin{subfigure}{\textwidth}
        \centering
        \includegraphics[width=0.9\textwidth]{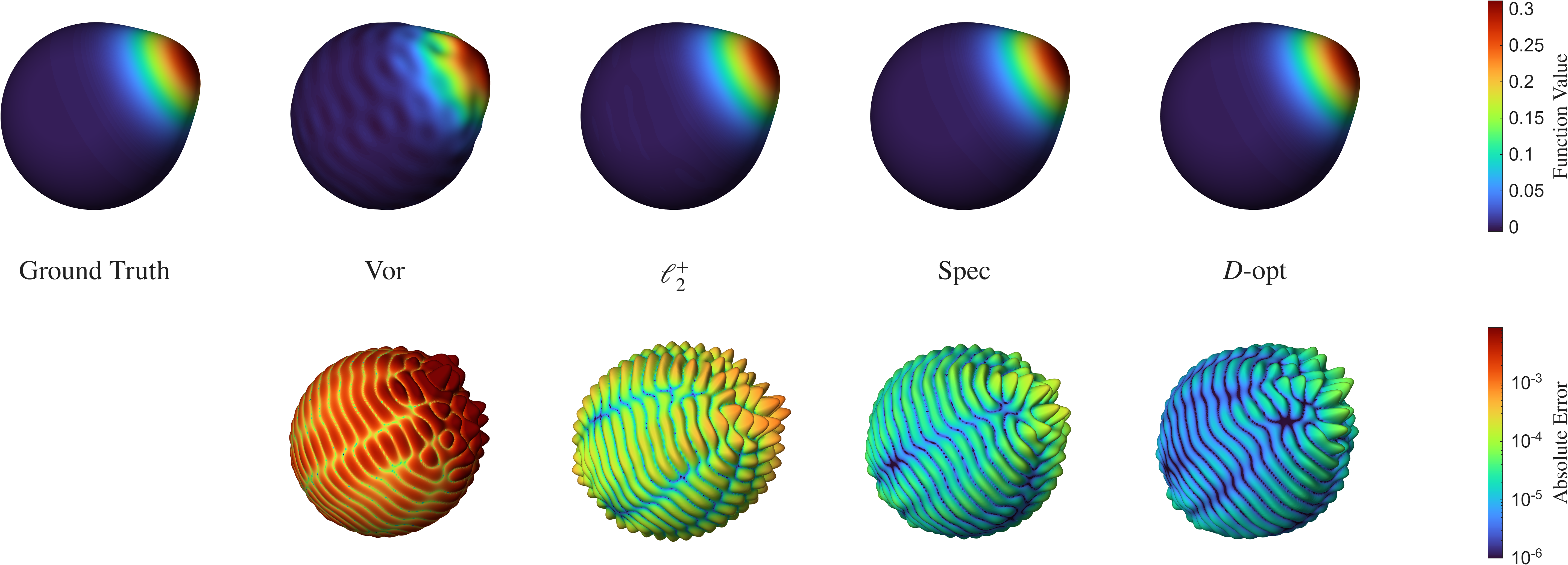}
        \caption{Hyperinterpolation on $f_3$.}
    \end{subfigure}
    
    \vspace{5mm}
    
    \begin{subfigure}{\textwidth}
        \centering
        \includegraphics[width=0.9\textwidth]{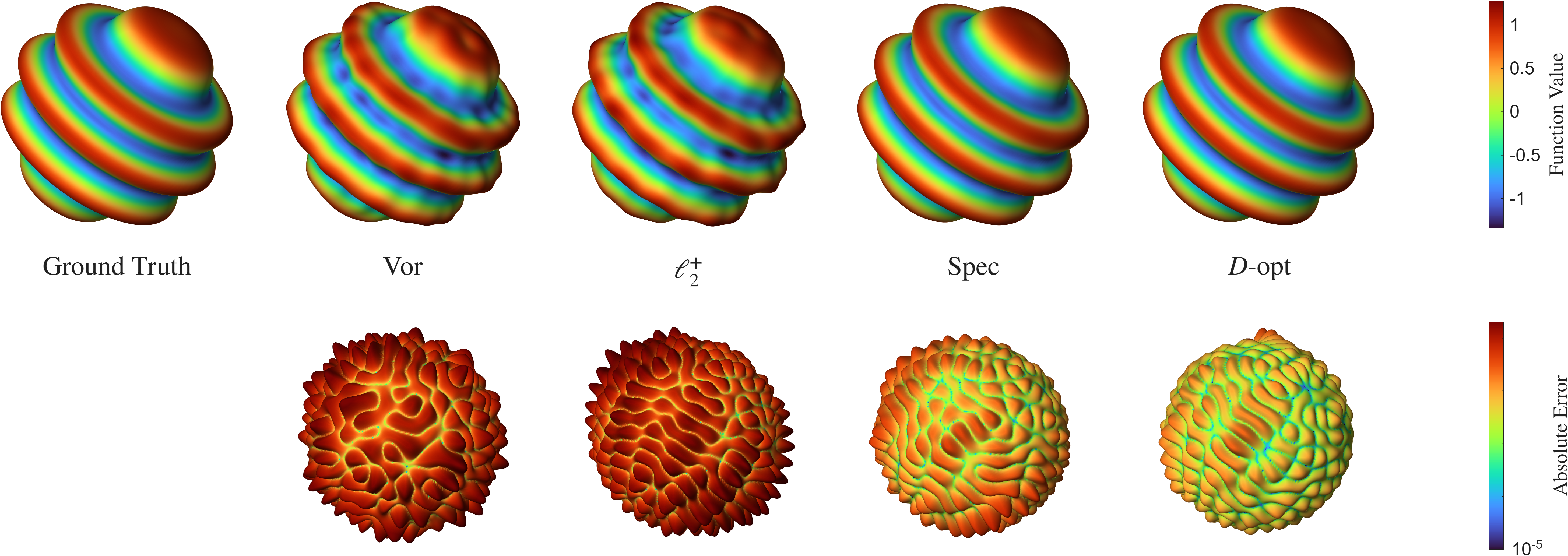}
        \caption{Hyperinterpolation on $f_4$.}
    \end{subfigure}

    \vspace{5mm}
    
    \begin{subfigure}{\textwidth}
        \centering
        \includegraphics[width=0.9\textwidth]{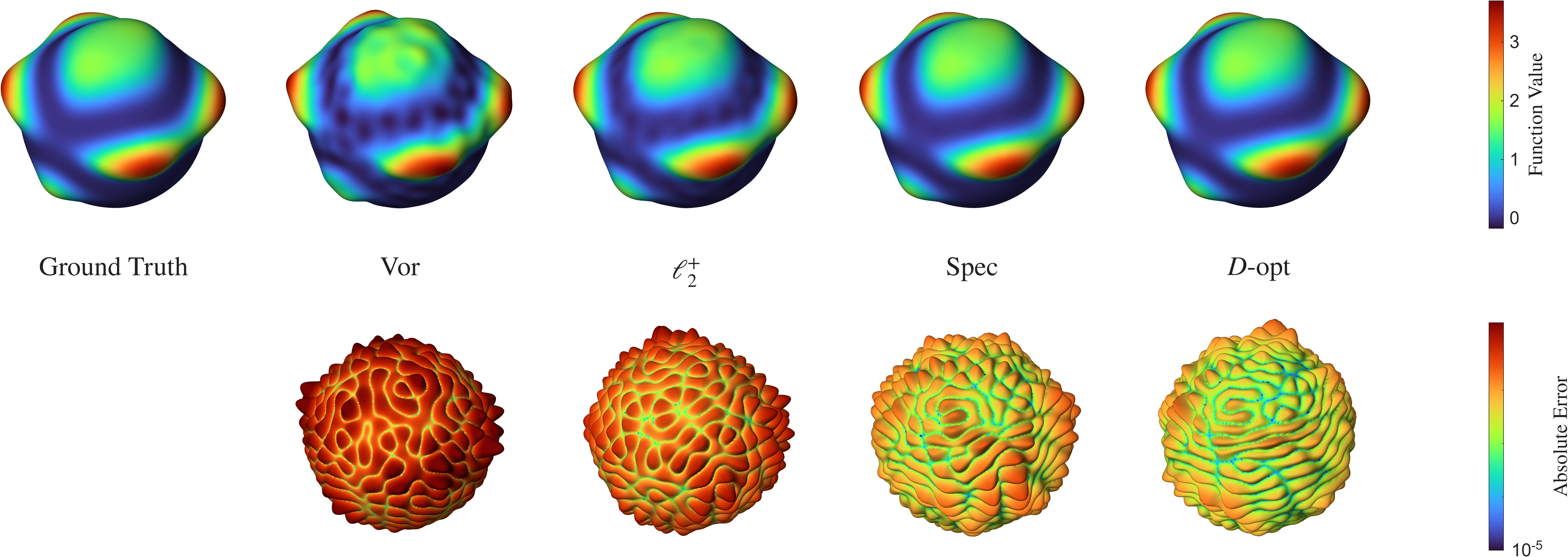}
        \caption{Hyperinterpolation on $f_5$.}
    \end{subfigure}

    \caption{Visual performance of the hyperinterpolation based on $N=2,048$ Halton points. The degree of the target $L^2$ MZ inequality is set as $n = 22$. The top row of each subfigure displays the computed approximant $L_n f$, while the bottom row illustrates the absolute error $|L_n f(\mathbf{x}) - f(\mathbf{x})|$ on the sphere.}
    \label{fig:hyper_halton}
\end{figure}

\begin{figure}[htbp]
    \centering
    \begin{subfigure}{\textwidth}
        \centering
        \includegraphics[width=0.9\textwidth]{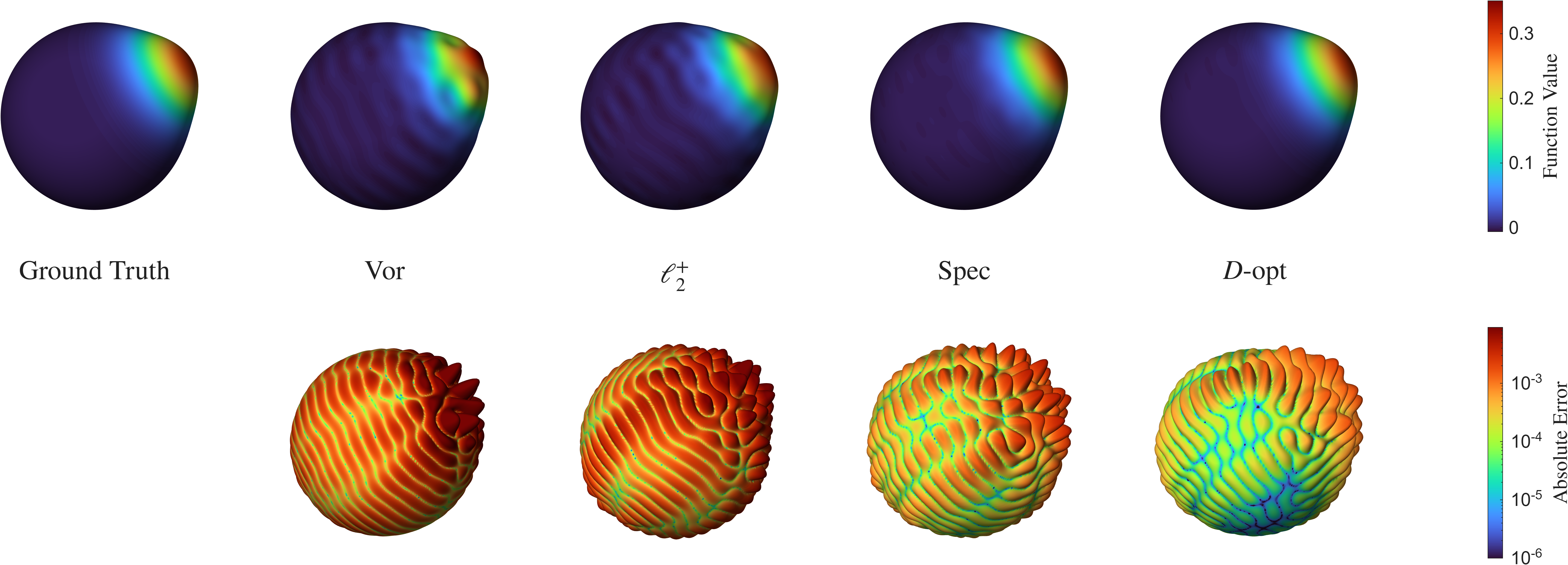}
        \caption{Hyperinterpolation on $f_3$.}
    \end{subfigure}

    \vspace{5mm}
    
    \begin{subfigure}{\textwidth}
        \centering
        \includegraphics[width=0.9\textwidth]{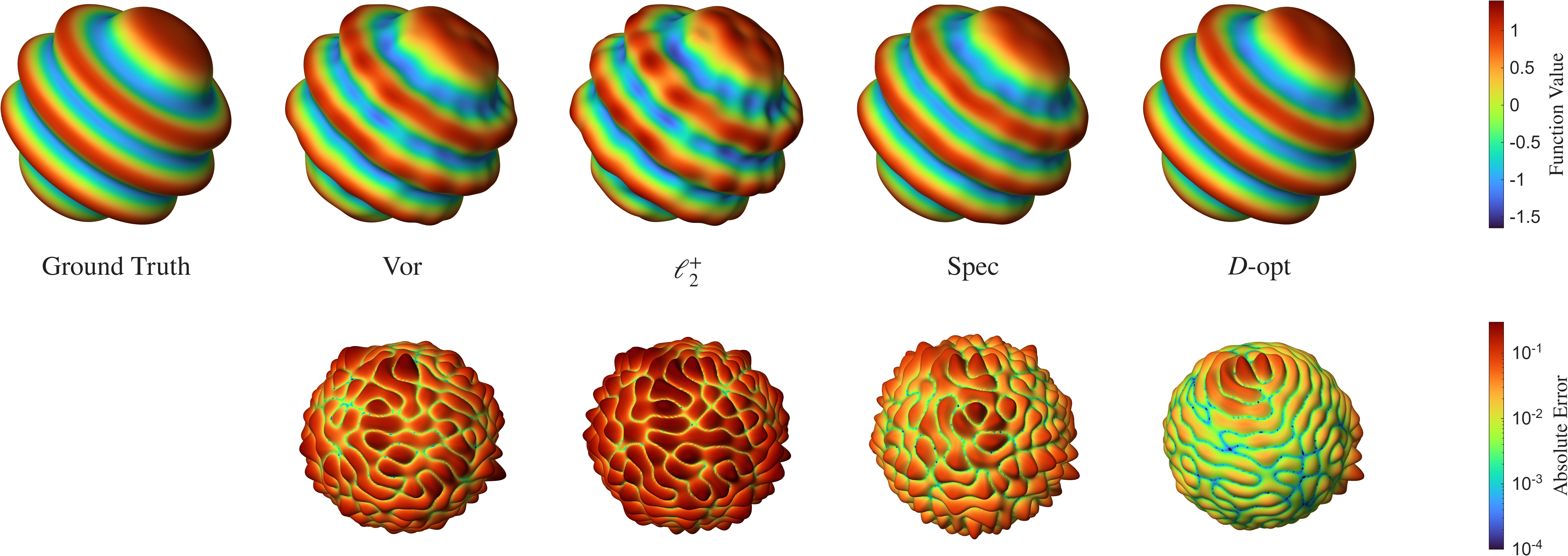}
        \caption{Hyperinterpolation on $f_4$.}
    \end{subfigure}

    \vspace{5mm}

    \begin{subfigure}{\textwidth}
        \centering
        \includegraphics[width=0.9\textwidth]{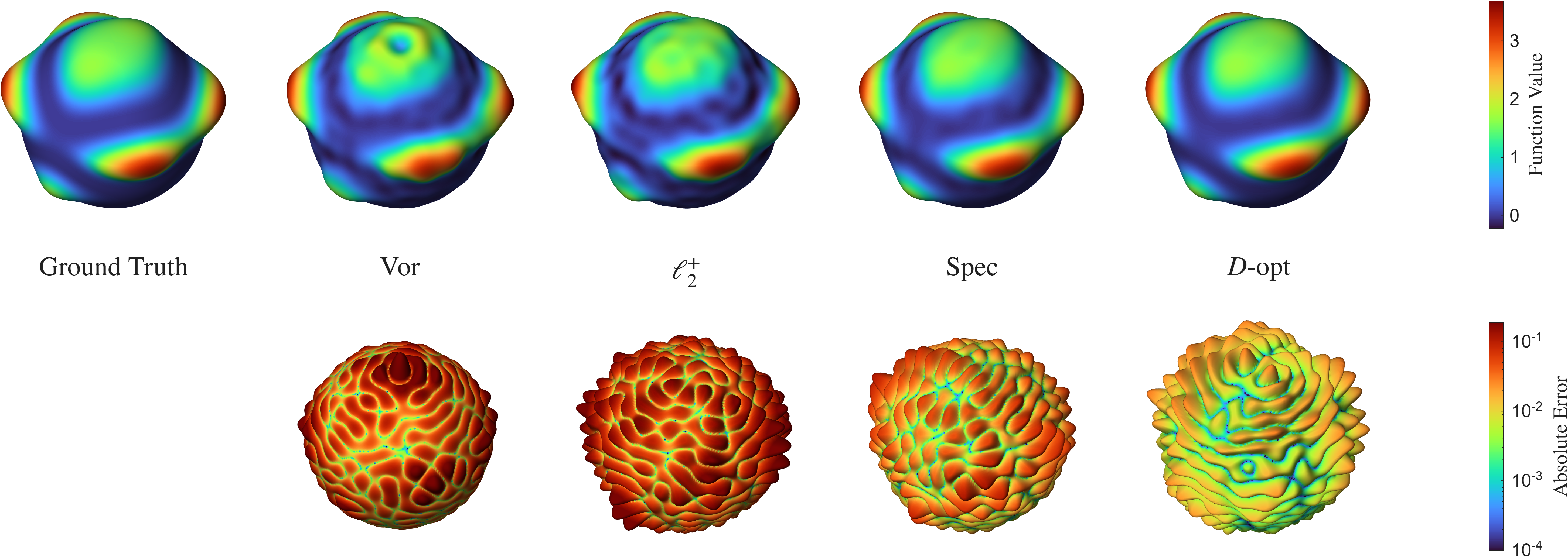}
        \caption{Hyperinterpolation on $f_5$.}
    \end{subfigure}

    \caption{Visual performance of the hyperinterpolation based on $N=2,048$ MAGSAT points. The degree of the target $L^2$ MZ inequality is set as $n = 21$. The top row of each subfigure displays the computed approximant $L_n f$, while the bottom row illustrates the absolute error $|L_n f(\mathbf{x}) - f(\mathbf{x})|$ on the sphere.}
    \label{fig:hyper_magsat}
\end{figure}

\newpage
These visual improvements are further quantified by the $L^2$ and $L^\infty$ hyperinterpolation errors reported in \cref{tab:hyper_error}, respectively. To approximate the continuous $L^2$ and $L^\infty$ errors over the unit sphere, we evaluate the functions on a dense validation set $Y = \{\mathbf{y}_k\}_{k=1}^{N_\text{val}} \subseteq \mathbb{S}^2$ consisting of $N_{\text{val}} = 50,000$ equal-partition points \cite{leopardi2007distributing}, and approximate as
\begin{equation*}
    \|L_nf - f\|_{L^2} \approx \left(\frac{4\pi}{N_{\text{val}}} \sum_{k=1}^{N_\text{Val}} |L_nf(\mathbf{y}_k) - f(\mathbf{y}_k)|^2\right)^{1/2},
\end{equation*}
and
\begin{equation*}
    \|L_nf - f\|_{L^\infty} \approx \max_{1 \leq k \leq N_\text{val}} |L_nf(\mathbf{y}_k) - f(\mathbf{y}_k)|.
\end{equation*}

\begin{table}[htbp]
\centering
\setlength{\tabcolsep}{4pt}
\caption{Approximated $L^\infty$ and $L^2$ hyperinterpolation error.}
\label{tab:hyper_error}
\begin{adjustbox}{max width=\textwidth}
\begin{tabular}{ccccccccc}
\toprule
\multirow{2}{*}{Point} & \multirow{2}{*}{Quadrature} & \multicolumn{2}{c}{$f_3$} & \multicolumn{2}{c}{$f_4$} & \multicolumn{2}{c}{$f_5$} \\
\cmidrule(lr){3-4} \cmidrule(lr){5-6} \cmidrule(lr){7-8}
 & & $\|L_m f - f\|_{L^2}$ & $\|L_m f - f\|_{L^\infty}$ & $\|L_m f - f\|_{L^2}$ & $\|L_m f - f\|_{L^\infty}$ & $\|L_m f - f\|_{L^2}$ & $\|L_m f - f\|_{L^\infty}$ \\ \midrule
\multirow{4}{*}{Halton} & Vor & 1.4425e-02 & 4.2628e-02 & 3.0726e-01 & 3.3278e-01 & 2.8700e-01 & 5.5644e-01 \\
 & $\ell_2^+$ & 6.3151e-04 & 1.0307e-03 & 3.2360e-01 & 3.8512e-01 & 1.1164e-01 & 1.7888e-01 \\
 & Spec & 1.2474e-04 & 2.3553e-04 & 5.8403e-02 & 8.5187e-02 & 2.6443e-02 & 3.5628e-02 \\
 & $D$-opt & 4.6647e-05 & 8.0689e-05 & 2.3681e-02 & 4.0263e-02 & 1.6248e-02 & 2.2968e-02 \\ \midrule
\multirow{4}{*}{MAGSAT} & Vor & 1.6645e-02 & 5.0504e-02 & 3.2144e-01 & 4.9613e-01 & 2.9955e-01 & 9.5901e-01 \\
 & $\ell_2^+$ & 8.9014e-03 & 1.9594e-02 & 4.9438e-01 & 6.4968e-01 & 2.7519e-01 & 3.9640e-01 \\
 & Spec & 2.6437e-03 & 4.6222e-03 & 1.5740e-01 & 1.8390e-01 & 8.5394e-02 & 1.2063e-01 \\
 & $D$-opt & 1.2749e-03 & 2.0977e-03 & 7.4342e-02 & 1.4868e-01 & 3.1267e-02 & 3.0362e-02 \\ \bottomrule
\end{tabular}
\end{adjustbox}
\end{table}

A particularly illuminating dynamic emerges when comparing the spectral and $D$-optimal collocation. The spectral collocation successfully drives $\eta$ to a smaller value than the $D$-optimal surrogate. However, this minimal $\eta$ is achieved at the cost of a larger $c$, resulting in a less stable quadrature. This instability translates directly into practice: ($D$-opt) consistently produces lower overall approximation errors than (Spec) as observed visually in \cref{fig:hyper_halton,fig:hyper_magsat} and numerically in \cref{tab:hyper_error}.

This empirical behavior is consistent with the accuracy-stability decomposition of the hyperinterpolation error established in \eqref{eq:hyperinterpolation-error-decomposition}. While minimizing the MZ constants is paramount, pursuing an absolute minimal $\eta$ often destabilizes the hyperinterpolation operator. Consequently, slightly relaxed and near-optimal $D$-optimal quadratures may provide more robust and superior balances for practical hyperinterpolation tasks.

\section{Conclusions}

Since quadrature nodes for many real-world applications are determined by physical constraints rather than optimal designs, we are inspired to consider the problem of weight collocation for the scattered nodes. Our principle of weight collocation is built upon P\'olya's foundational theorem on quadrature convergence in 1933, and we study this problem via an optimization perspective. More specifically, we identify that the classical pursuits  of weight positivity and high algebraic exactness are not necessary. 
We show that, by relaxing these requirements, the task of constructing quadrature rules for fixed node sets can be characterized by standard convex optimization problems. Moreover, we propose the kernel collocation for the minimization of worst-case integration error, and the MZ collocation for the stability--accuracy decomposition of hyperinterpolation error. 
We refine the spectral stability analysis of the discrepancy matrix, which provides a theoretically justified truncation level, and delineate the sensitivity of the kernel collocation process to functional smoothness. Also, the $2$-optimality of Voronoi partitions eventually leads to a geometry-aware regularizer that controls quadrature weights according to the underlying geometry of scattered sets for hyperinterpolation. The resulting standard optimization models can theoretically underpin numerical integration and hyperinterpolation, and they are computationally solvable by off-the-shelf solvers with high efficiency. Our theoretical assertions are also supported by comprehensive numerical studies. 

In summary, we provide a computationally efficient approach to weight collocation for scattered spherical data from the optimization perspective, and it would be natural to consider extending our methodologies to other Riemannian manifolds, such as higher-dimensional spheres and tori.

\section*{Acknowledgment}
The authors would like to thank Prof. Alvise Sommariva for generously providing the MAGSAT dataset.

\bibliographystyle{siamplainmc}
\small
\bibliography{references}

\end{document}